\newtheorem{theorem}{Theorem}[section]
\newtheorem{corollary}[theorem]{Corollary}
\newtheorem{proposition}[theorem]{Proposition}
\newtheorem{definition}[theorem]{Definition}
\theoremstyle{definition}
\newtheorem{example}[theorem]{Example}
\newtheorem{remark}[theorem]{Remark}
\newcommand{\curvedLie}{\mathscr{L}}
\newcommand{\Prod}{\operatorname{Prod}}
\newcommand{\coProd}{\operatorname{coProd}}
\newcommand{\Hom}{\operatorname{Hom}}
\newcommand{\MC}{\operatorname{MC}}
\newcommand{\CE}{\operatorname{CE}}
\newcommand{\Ho}{\operatorname{Ho}}
\title{Koszul duality and homotopy theory of curved Lie algebras}
\author{James Maunder}
\email{j.maunder@lancaster.ac.uk}
\address{Department of Mathematics and Statistics\\
		Lancaster University\\
		Lancaster LA1 4YF\\
		United Kingdom}
\thanks{The author would like to thank Andrey Lazarev for many helpful and fruitful discussions, and for his corrections and remarks on the drafted versions of the present paper. The author also owes thanks to John Greenlees and an anonymous referee for many helpful suggestions and corrections.}
\keywords{curved Lie algebra, homotopy, Koszul duality, deformation functor}
\begin{document}

\begin{abstract}
This paper introduces the category of marked curved Lie algebras with curved morphisms, equipping it with a model structure. This model structure is---when working over an algebraically closed field of characteristic zero---Quillen equivalent to a model category of pseudo-compact unital commutative differential graded algebras; extending known results regarding the Koszul duality of unital commutative differential graded algebras and differential graded Lie algebras. As an application of the theory developed within this paper, algebraic deformation theory is extended to functors over pseudo-compact, not necessarily local, commutative differential graded algebras. Further, these deformation functors are shown to be representable.
\end{abstract}

\maketitle

\section{Introduction}

Pseudo-compact unital commutative differential graded algebras are dual to cocommutative counital differential graded coalgebras, and hence arise naturally as cochain complexes of topological spaces; at least in the simply connected case. Further, they play a notable role in rational homotopy theory \cite{neisendorfer,quillen} and serve as representing objects in formal deformation problems \cite{hinich_stacks,manetti,pridham}. Therefore, when working over a field of characteristic zero, it is natural to attempt to place them in the framework of a closed model category. Quillen \cite{quillen} produced the first result of this kind---albeit under a strong assumption of connectedness. Later the connectedness assumption was removed by Hinich \cite{hinich_stacks}. The crucial difference between the approaches of Quillen and Hinich was Hinich had chosen a finer notion of weak equivalence in his model structure: he worked with filtered quasi-isomorphisms, whereas Quillen worked with quasi-isomorphisms.

Additionally, the constructions of Quillen and Hinich have a notable attribute: the model categories are Quillen equivalent to certain model structures on the category of differential graded Lie algebras. This type of equivalence is so-called \emph{Koszul duality}. Whilst the construction of Hinich generalises the construction of Quillen, it does not completely generalise to the category of all cocommutative counital differential graded coalgebras; Hinich worked with conilpotent coalgebras in loc. cit. When the ground field is algebraically closed, the construction was completely generalised to all coalgebras by Chuang, Lazarev, and Mannan \cite{chuang_laz_mannan}. The authors therein chose to work in the dual setting of pseudo-compact unital commutative differential graded algebras. One especially pleasant and useful property shown in op. cit.\ is, when working over an algebraically closed field, any pseudo-compact commutative differential graded algebra can be decomposed into a product of local pseudo-compact algebras. In fact, there exists only two types of local algebras---Hinich algebras where the unique maximal ideal is closed under the differential and acyclic algebras where any closed element is also a boundary. Hinich algebras are precisely those studied by Hinich in \cite{hinich_stacks}, hence the name.

The Koszul duality is also extended in \cite{chuang_laz_mannan}. The Koszul dual therein is the category of formal coproducts of curved Lie algebras. This construction, however, is somewhat asymmetric, and herein this category is replaced with a more natural one---namely the category of marked curved Lie algebras with curved morphisms---providing a more intuitive and symmetric description for a Koszul dual to the category of cocommutative counital differential graded coalgebras, and one that is easier to work with.

Numerous papers discuss the homotopy theory of differential graded coalgebras over different operads; for example Positselski \cite{positselski} constructed a homotopy theory for coassociative differential graded coalgebras. However, the coalgebras were assumed to be conilpotent and this construction is not known in the completely general case. Further, Positselski worked with curved objects suggesting that in more general cases when discussing a Koszul duality one side of the Quillen equivalence should be a category consisting of curved objects; a hypothesis that is strengthened by results of \cite{chuang_laz_mannan} and this paper.

The paper is organised as follows. Sections \ref{sec_formal_prod} and \ref{sec_ext_hinich_and_cat_of_pc_cdga} recall (without proof) the necessary facts concerning the category of formal products of a given category, the extended Hinich category, and the category of pseudo-compact unital commutative differential graded algebras. For more details see the original paper \cite{chuang_laz_mannan}.

In Section \ref{sec_marked_lie} the category of marked curved Lie algebras and curved morphisms is introduced. This category is similar to the category of curved Lie algebras with strict morphisms discussed in \cite{chuang_laz_mannan}. On the other hand, the morphisms are quite different and as such the category itself is quite different. For instance, in the category of curved Lie algebras with strict morphisms an object with non-zero curvature cannot be isomorphic to one with zero curvature, but using curved morphisms there is an abundance of such isomorphisms: any Lie algebra twisting by a Maurer-Cartan element gives rise a curved isomorphism (see Remark \ref{rem_MC_twist}). The category of marked curved Lie algebras is equipped with a model structure (Definition \ref{def_classes}), using the existing model structure of curved Lie algebras with strict morphisms (see \cite{chuang_laz_mannan}) together with \emph{rectification by a marked point} (Definition \ref{def_rectification}). Rectification by a marked point provides a procedure in which a strict morphism can be obtained from a curved one. It should be noted, however, this construction is not functorial.

Section \ref{sec_equiv} formulates the main result of the paper: a Quillen equivalence between the model category of marked curved Lie algebras and the model category of pseudo-compact unital commutative differential graded algebras (Theorem \ref{thm_main}). This equivalence uses a pair of adjoint functors that have their origins in the Harrison and Chevalley-Eilenberg complexes of homological algebra, found (for example) in \cite{barr,harrison} and \cite{weibel} respectively. As alluded to above, a benefit of the Koszul duality of this paper over \cite{chuang_laz_mannan} is the symmetry of the construction.

Section \ref{sec_deformation} applies the material developed in the rest of the paper to introduce a certain class of deformation functors acting over pseudo-compact unital commutative differential graded algebras. Theorem \ref{thm_rep_def} shows these deformation functors are representable in the homotopy category of pseudo-compact unital commutative differential graded algebras. Two definitions (\ref{def_def_functor} and \ref{def_def_functor2}) are given for these deformation functors; one being slightly more general than the other. The less general Definition \ref{def_def_functor2} enjoys the benefit of not requiring the knowledge of a decomposition of a pseudo-compact commutative differential graded algebra into the product of local pseudo-compact commutative differential graded algebras; this is possible via a functor of \cite{chuang_laz_mannan}. Here deformation theory is considered via the differential graded Lie algebra approach; see \cite{goldman_millson,kontsevich,manetti,schlessinger_stasheff_deformation_rational,schlessinger_stasheff_tangent_cohomology_deformation}, for example.

The notion of a \emph{Sullivan homotopy} was introduced by Sullivan \cite{sullivan} in his work on rational homotopy theory, and Appendix \ref{sec_sullivan} defines analogues in the categories of curved Lie algebras with strict morphisms and local pseudo-compact commutative differential graded algebras. These analogues serve the constructions of Section \ref{sec_deformation} by showing (when the objects considered are suitably nice) the equivalence classes of Sullivan homotopic morphisms is in bijective correspondence with the classes of morphisms in the homotopy category (Theorem \ref{thm_sullivan_homotopy}). An explicit construction of a path object in the category of curved Lie algebras with strict morphisms and similar---but more subtle---ideas in the category of local pseudo-compact commutative differential graded algebras lead to the proof.

\section*{Notation and conventions}

Throughout the paper it is assumed all commutative and Lie algebras are over a fixed algebraically closed field, $k$, of characteristic zero. Algebraic closure is necessary for some key results, but some of the statements hold in a more general setting. Unmarked tensor products are assumed to be over $k$ and algebras are assumed to be unital, unless stated otherwise. The vector space over $k$ spanned by the vectors $a,b,c,\dots$ is denoted by $\langle a,b,c,\ldots \rangle$. Similarly, the free differential graded Lie algebra over $k$ on generators $a,b,c,\ldots$ is denoted by $\operatorname{L}\langle a,b,c,\ldots \rangle$.

The following abbreviations are commonly used throughout the paper: `dgla' for differential graded Lie algebra; `cdga' for commutative differential graded algebra; `dg' for differential graded; `MC' for Maurer-Cartan; `CMC' for closed model category in the sense of \cite{quillen} (for a review of this material see \cite{dwyer_spalinski}); `LLP' for left lifting property; and `RLP' for right lifting property.

Graded objects are assumed to be $\mathbb{Z}$-graded, unless otherwise stated. For both commutative and Lie algebras this grading is in the homological sense with lower indices. Although some Lie algebras in this paper are not necessarily complexes, they possess an odd derivation often referred to as the (pre-)differential and hence resemble complexes. In the homological grading, these differentials have degree $-1$. Given any homogeneous element, $x$, of some given algebra, its degree is denoted by $|x|$. Therefore, in the homological setting a $\MC$ element is of degree $-1$ and the curvature element of a curved Lie algebra is of degree $-2$. The suspension, $\Sigma V$, of a homologically graded space is defined by $(\Sigma V)_i = V_{i-1}$. Applying the functor of linear discrete or topological duality takes homologically graded spaces to cohomoligically graded ones, and vice versa, i.e.\ $(V^*)^i=(V_i)^*$. A homologically graded space can therefore be considered equivalently as a cohomological one by setting $V_i=V^{-i}$ for each $i\in\mathbb{Z}$. Additionally, $\Sigma V^*$ is written for $\Sigma (V^*)$, and with this convention there is an isomorphism $(\Sigma V)^*\cong\Sigma^{-1} V^*$.

Many cdgas considered in this paper are pseudo-compact. A cdga is said to be pseudo-compact if it is an inverse limit of finite dimensional commutative graded algebras with continuous differential. Taking the inverse limit induces a topology and the operations of the algebra are assumed to be continuous with respect to this topology. More details on pseudo-compact objects can be found in \cite{gabriel,keller_yang,vandenbergh}.

Consider a curved Lie algebra, $(\mathfrak{g},d_\mathfrak{g},\omega )$, and a pseudo-compact cdga, $A=\varprojlim A_i$. The \emph{completed tensor product}, denoted $\hat{\otimes}$, is given by
\[
\mathfrak{g}\hat{\otimes} A=\varprojlim_i \mathfrak{g}\otimes A_i,
\]
where the tensor on the right hand side is given by the tensor product in the category of graded vector spaces. Note the adjective `completed' is dropped almost everywhere. This tensor product possesses the structure of a curved Lie algebra: the curvature is given by $\omega \hat{\otimes} 1$, the differential is given on elementary tensors by $d(x \hat{\otimes} a)=(d_\mathfrak{g} x)\hat{\otimes} a + (-1)^{|x|}x\hat{\otimes} (d_A a)$, and the bracket is given on elementary tensors by $[x\hat{\otimes} a,y\hat{\otimes} b]=[x,y]\hat{\otimes} (-1)^{|a||y|} ab$. This construction is useful in Section \ref{sec_deformation} when defining deformation functors; Definitions \ref{def_def_functor} and \ref{def_def_functor2}.

\section{The category of formal products}\label{sec_formal_prod}

The category of formal products was defined as a means to describe the category of pseudo-compact cdgas. Here the definition and some facts are recalled; for greater details and the proofs see the original paper \cite{chuang_laz_mannan}. Let $\mathcal{C}$ be a CMC.

\begin{definition}
The category of formal products in $\mathcal{C}$, denoted $\Prod (\mathcal{C})$, is the category with objects given by morphisms from indexing sets to the set of objects of the category $\mathcal{C}$. An object is denoted by $\prod_{i \in I} A_i$, where $I$ is some indexing set and for each $i\in I$ the morphism sends $i\mapsto A_i\in \mathcal{C}$. A morphism in $\Prod (\mathcal{C} )$,
\[
f\colon\prod_{i \in I} A_i \to \prod_{j\in J} B_j,
\]
is given by a morphism of sets $J\to I$ that sends $j\mapsto i_j$, and a morphism $f_j \colon A_{i_j}\to B_j$ of $\mathcal{C}$ for all $j\in J$. The morphism $f_j$ is called the $j$th component of the morphism $f$.
\end{definition}

\begin{remark}\label{rem_terminal_in_prod}
The indexing set of an object in $\Prod (\mathcal{C})$ could be empty and in this case one has the terminal object for $\Prod(\mathcal{C})$.
\end{remark}

\begin{definition}
Let $\prod_{i\in I} A_i, \prod_{j\in J} B_j \in\Prod (\mathcal{C})$, their product is given by
\[
\prod_{i\in \left(I\coprod J\right)} A_i
\]
and their coproduct is given by 
\[
\prod_{(i,j)\in I\times J} \left(A_i\coprod B_j \right).
\]
Both constructions easily extend from the binary case.
\end{definition}

\begin{proposition}
Given a morphism $f\colon\prod_{i\in I} A_i \to \prod_{j\in J} B_j$, for each $i\in I$, let $B^i$ denote the product in $\mathcal{C}$ of the $B_j$ satisfying $i_j = i$. The morphisms $f_j \colon A_i \to B_j$ factor uniquely through a morphism
$f^i \colon A_i \to B^i $. \qed
\end{proposition}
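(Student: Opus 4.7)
The plan is to use the universal property of products in $\mathcal{C}$ directly. Unpacking the definition of a morphism in $\Prod(\mathcal{C})$, the data of $f$ consists of a set map $J \to I$, $j \mapsto i_j$, together with a morphism $f_j\colon A_{i_j} \to B_j$ of $\mathcal{C}$ for each $j \in J$. Fix $i \in I$ and let $J_i = \{j \in J : i_j = i\}$. By construction, for every $j \in J_i$ the component $f_j$ has domain $A_i$, so one has a family $\{f_j\colon A_i \to B_j\}_{j \in J_i}$ of morphisms in $\mathcal{C}$ with common domain $A_i$ and targets the factors of $B^i$.

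The object $B^i = \prod_{j \in J_i} B_j$ is by definition the product in $\mathcal{C}$, and so comes equipped with projections $\pi_j \colon B^i \to B_j$ satisfying the universal property: any cone $\{g_j \colon X \to B_j\}_{j \in J_i}$ factors through a unique morphism $X \to B^i$. Applying this to $X = A_i$ and $g_j = f_j$ produces the desired morphism $f^i \colon A_i \to B^i$ with $\pi_j \circ f^i = f_j$, and its uniqueness is immediate from the same universal property.

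The only step that is not completely formal is ensuring the set $J_i$ is genuinely the indexing set that defines $B^i$, but this is immediate from the statement of the proposition. If $J_i$ is empty, then $B^i$ is the terminal object of $\mathcal{C}$ and $f^i$ is the unique morphism from $A_i$ to it; this edge case requires no extra argument. There is no real obstacle here — the statement is essentially a bookkeeping consequence of the definitions, repackaging the data of $f$ indexed over $J$ into data indexed over $I$ via the universal property of categorical products.
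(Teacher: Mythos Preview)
Your proof is correct and is exactly the argument the paper has in mind: the proposition is stated with a bare \qed and no proof in the paper, signalling that it is immediate from the universal property of products in $\mathcal{C}$, which is precisely what you spell out.
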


\begin{definition}\label{def_classes_prod}
A morphism $f\colon\prod_{i\in I} A_i \to \prod_{j\in J} B_j$ of $\Prod (\mathcal{C})$ is said to be a
\begin{itemize}
\item weak equivalence if, and only if, the morphism $J\to I$ induced by f is a bijection and for every $j\in J$ the morphism $f_j$ (or equivalently each for every $i\in I$ the morphism $f^i$) is a weak equivalence in $\mathcal{C}$;
\item cofibration if, and only if, for each $j\in J$ the morphism $f_j$ is a cofibration in $\mathcal{C}$;
\item fibration if, and only if, for each $i\in I$ the morphism $f^i$ is a fibration in $\mathcal{C}$.
\end{itemize}
\end{definition}

The classes of morphisms in Definition \ref{def_classes_prod} provide $\Prod (\mathcal{C})$ with the structure of a CMC; c.f \cite[Theorem 4.8]{chuang_laz_mannan}.

\begin{example}
To gain some intuition, an example of \cite{chuang_laz_mannan} is recalled. Let $\mathcal{C}$ be the category of connected topological spaces, then the category $\coProd (\mathcal{C}):=\Prod(\mathcal{C}^{op})$ is the category of all topological spaces that can be written as the disjoint union of connected spaces. In fact, in \cite{laz_markl}, the categories of finite (co)products were used to construct a disconnected rational homotopy theory.
\end{example}

\section{The extended Hinich category and pseudo-compact cdgas}\label{sec_ext_hinich_and_cat_of_pc_cdga}

As shown in \cite{chuang_laz_mannan}, the category of pseudo-compact cdgas (denoted herein $\mathcal{A}$) is equivalent to the category of formal products of local pseudo-compact cdgas. The category of local pseudo-compact cdgas is referred to as the \emph{extended Hinich category} and denoted by $\mathcal{E}$. This section does not contain any original results; for greater details and the proofs see the original paper \cite{chuang_laz_mannan}.

\begin{proposition}
Any pseudo-compact cdga is isomorphic to a direct product of local pseudo-compact cdgas. \qed
\end{proposition}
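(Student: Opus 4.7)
The plan is to produce the decomposition through a complete family of orthogonal idempotents of $A$, leveraging both that $k$ is algebraically closed and the defining presentation $A = \varprojlim A_i$ with each $A_i$ a finite-dimensional commutative graded algebra equipped with a continuous differential.

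A preliminary observation is that any idempotent $e$ in a commutative dg algebra is automatically a cycle. Indeed, $e$ must be concentrated in degree $0$; differentiating $e^2 = e$ gives $d(e) = 2e\,d(e)$, and multiplying by $e$ yields $e\,d(e) = 0$, whence $d(e)=0$. Thus idempotents can be handled purely algebraically, and the standard structure theory of finite-dimensional commutative algebras over $k$ applies level by level.

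For each $A_i$, Artinianness and algebraic closedness of $k$ furnish a unique decomposition $1 = e^i_1 + \cdots + e^i_{n_i}$ into primitive orthogonal idempotents, producing a product decomposition $A_i \cong \prod_{k=1}^{n_i} A_i e^i_k$ in which each factor is a local finite-dimensional cdga with residue field $k$. The transition maps of the inverse system send each primitive idempotent to a sum of primitive idempotents; organising these refinements into a cofiltered system yields an indexing set $E$ of coherent ``threads'' of primitive idempotents, equivalently parametrising the closed maximal ideals of $A$. For $\epsilon \in E$ the thread $(e^i_\epsilon)_i$ assembles to a closed idempotent $e_\epsilon \in A$, and one then verifies that $Ae_\epsilon = \varprojlim_i A_i e^i_\epsilon$ is a local pseudo-compact cdga.

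The final step is to identify $A$ with $\prod_{\epsilon \in E} A e_\epsilon$ in $\mathcal{A}$. Compatibility with the pseudo-compact topology reduces to the level of each finite-dimensional quotient, where the isomorphism $A_i \cong \prod_k A_i e^i_k$ is already at hand; orthogonality of the $e_\epsilon$ and the identity $\sum_\epsilon e_\epsilon = 1$ hold in each layer and so pass to the inverse limit. In my view the main obstacle will be Step~3, namely the bookkeeping of refinements of idempotents across the inverse system and the verification that cofinality of the threads yields a topologically faithful product decomposition. Algebraic closedness of $k$ is essential here: it prevents any residual ``Galois'' splitting of local factors further down the tower, which is what ensures that the set $E$ is stable under refinement and that each $Ae_\epsilon$ is genuinely local rather than a non-trivial product itself.
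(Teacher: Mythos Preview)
The paper does not actually prove this proposition: it is stated with a bare \qed and the surrounding section explicitly declares that ``this section does not contain any original results; for greater details and the proofs see the original paper \cite{chuang_laz_mannan}.'' So there is no proof in the paper to compare against; the result is simply imported from Chuang--Lazarev--Mannan.

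That said, your outline is the standard route and is essentially what one finds in the cited source: lift a complete system of primitive orthogonal idempotents through the inverse system of finite-dimensional quotients, using that over an algebraically closed field each Artinian quotient splits as a product of local algebras with residue field $k$, and that idempotents are automatically closed under the differential. Your own caveat is accurate: what you have written is a plan rather than a proof, and the genuine work lies in Step~3. Concretely, you must argue that primitive idempotents lift coherently along the transition maps (this uses that idempotents lift uniquely through nilpotent extensions, so the Boolean algebra of idempotents of $A$ is the inverse limit of those of the $A_i$), and then check that the resulting map $A \to \prod_{\epsilon} A e_\epsilon$ is a topological isomorphism, not merely a bijection. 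None of this is wrong, but as written it would not pass as a proof; it is a correct sketch whose hard step you have identified but not carried out.
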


\begin{definition}
A local pseudo-compact cdga is said to be
\begin{itemize}
\item a Hinich algebra if the maximal ideal is closed under the differential;
\item an acyclic algebra if every cycle is a boundary.
\end{itemize}
\end{definition}

Every local pseudo-compact cdga is in fact one of the above two types, c.f.\ \cite{chuang_laz_mannan}. The category of Hinich algebras is referred to as the \emph{Hinich category}.

\begin{definition}
Given $A\in\mathcal{E}$ with maximal ideal $M$, the \emph{full Hinich subalgebra} is the local pseudo-compact cdga given as $A^H:=\lbrace a\in A : da\in M\rbrace$.
\end{definition}

Clearly, $A^H$ is a Hinich algebra and if $A$ is a Hinich algebra then $A^H=A$. If $A$ is acyclic then $A^H$ has codimension one in $A$.

\begin{proposition}\label{prop_no_morphisms_acyclic_to_hinich}
Given a morphism $f\colon A\to B$ of pseudo-compact cdgas, it restricts to a morphism $f^H\colon A^H \to B^H$ of Hinich algebras. Conversely, given $x\in A$ such that $x\notin A^H$, then $f(x)\notin B^H$. \qed
\end{proposition}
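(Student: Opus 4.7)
The plan is to first observe that any morphism $f\colon A\to B$ between local pseudo-compact cdgas sends $M_A$ into $M_B$, from which both assertions follow immediately. The general (not necessarily local) case then reduces via the decomposition of any pseudo-compact cdga into a product of local ones, together with the componentwise description of morphisms in $\Prod(\mathcal{E})$: one applies the local result to each component $f_j\colon A_{i_j}\to B_j$.

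For the key containment $f(M_A)\subseteq M_B$ in the local case, I would use that, because $k$ is algebraically closed, each local pseudo-compact cdga has residue field $k$; this gives augmentations $\epsilon_A\colon A\twoheadrightarrow k$ and $\epsilon_B\colon B\twoheadrightarrow k$ with kernels $M_A$ and $M_B$ respectively. The composite $\epsilon_B\circ f\colon A\to k$ is a continuous unital $k$-algebra morphism, hence surjective, so its kernel is a maximal ideal of $A$; by locality of $A$ this kernel must equal $M_A$, and therefore $f(M_A)\subseteq M_B$.

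Granting this, the first statement is immediate: if $a\in A^H$ then $da\in M_A$, so $df(a)=f(da)\in M_B$, and thus $f(a)\in B^H$; this yields the restriction $f^H\colon A^H\to B^H$. For the converse, if $x\in A$ satisfies $x\notin A^H$ then $dx\notin M_A$, so $dx$ is a unit in the local ring $A$. Since $f$ is unital, $f(dx)$ is then a unit of $B$ and so cannot lie in $M_B$; but $f(dx)=df(x)$, whence $f(x)\notin B^H$ by definition.

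The only genuine obstacle is verifying $f(M_A)\subseteq M_B$, i.e.\ showing that a morphism of local pseudo-compact cdgas is automatically local; this is precisely where the algebraic closure of $k$ enters, through the identification of residue fields with $k$. Everything else is a direct unpacking of the definition of $A^H$ and $B^H$, with no computation of substance.
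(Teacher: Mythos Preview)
Your argument is correct. The paper itself offers no proof here: the proposition is one of several facts recalled without proof from \cite{chuang_laz_mannan}, as announced at the start of Section~\ref{sec_ext_hinich_and_cat_of_pc_cdga}, so there is nothing to compare against.

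One minor remark: your reduction from the general to the local case is unnecessary, since $A^H$ is defined only for $A\in\mathcal{E}$ (see the definition immediately preceding the proposition), so the statement is already about local pseudo-compact cdgas. Your core argument---that $\epsilon_B\circ f$ is a unital $k$-algebra map $A\to k$ and hence, by locality of $A$, has kernel $M_A$, giving $f(M_A)\subseteq M_B$---is exactly what is needed, and both assertions then drop out as you describe. The phrasing ``$dx$ is a unit'' is slightly loose in the graded setting, but the substance is fine: $dx\notin M_A$ means $\epsilon_A(dx)\neq 0$, and since $\epsilon_B\circ f=\epsilon_A$ one gets $\epsilon_B(df(x))\neq 0$, i.e.\ $f(x)\notin B^H$.
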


It follows from Proposition \ref{prop_no_morphisms_acyclic_to_hinich} there cannot exist a morphism from an acyclic algebra to a Hinich algebra and any morphism from a Hinich algebra to an acyclic algebra must factor through the full Hinich subalgebra of the codomain.

\begin{remark}\label{rem_functor_H}
A functor from the extended Hinich category to the Hinich category is defined by $A\mapsto A^H$ and $(f\colon A\to B) \mapsto (f^H \colon A^H\to B^H)$. This functor forms the right adjoint in a Quillen adjunction (the left adjoint being the inclusion), c.f.\ \cite[Proposition 3.19]{chuang_laz_mannan}.
\end{remark}

\begin{proposition}
The acyclic algebra $\Lambda=k[x]/\langle x^2 \rangle$ with differential given by $dx=1$, where $|x|=1$, is the terminal object of $\mathcal{E}$.\qed
\end{proposition}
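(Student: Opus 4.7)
The plan is to show that for any $A\in\mathcal{E}$ there is exactly one cdga morphism $f\colon A\to\Lambda$. Since $\Lambda$ is concentrated in degrees $0$ and $1$ with basis $\{1,x\}$, writing $f(a)=\alpha(a)+\beta(a)x$ reduces the problem to identifying a pair of continuous linear maps $\alpha\colon A_0\to k$ and $\beta\colon A_1\to k$ (with $f$ forced to vanish on $A_n$ for $n\neq 0,1$) satisfying the algebra and chain conditions. Multiplicativity on $A_0\times A_0$ and $A_0\times A_1$ translates into the conditions that $\alpha$ be a unital character and that $\beta(ab)=\alpha(a)\beta(b)$ for $a\in A_0$, $b\in A_1$; compatibility with the differential, using $dx=1$ in $\Lambda$, amounts to $\alpha(da)=\beta(a)$ for $a\in A_1$ and $\beta(da)=0$ for $a\in A_2$. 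Multiplicativity on $A_1\times A_1$ is automatic since $x^2=0$ and $f$ vanishes on $A_2$.

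Next I would establish uniqueness. Let $M$ denote the (unique, open) maximal ideal of the local pseudo-compact $A$. Because $k$ is algebraically closed, the quotient $A/M$ is isomorphic to $k$ concentrated in degree zero, so $A_0=k\cdot 1\oplus M_0$ and any unital character $A_0\to k$ is forced to kill $M_0$. This determines $\alpha$ uniquely as the residue map $A_0\twoheadrightarrow A_0/M_0=k$. The relation $\beta(a)=\alpha(da)$ then determines $\beta$ uniquely on $A_1$.

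For existence, I would define $\alpha$ and $\beta$ by the above formulas and verify the remaining conditions. The differential compatibility $\beta(da)=0$ on $A_2$ follows from $d^2=0$. Multiplicativity for $\beta$ on $A_0\times A_1$ reduces, using $\alpha(1)=1$ and the Leibniz rule $d(ab)=(da)b+a(db)$, to checking that $d(ab)\in M_0$ whenever $a\in M_0$ and $b\in A_1$; this is immediate since $M$ is an ideal so $(da)b$ and $a(db)$ both lie in $M$. Continuity of $f$ follows from openness of $M$ and continuity of $d$ in the pseudo-compact topology. The only subtlety is to keep track of grading: $\Lambda$ lives in degrees $0$ and $1$, and the vanishing of $f$ outside those degrees is compatible with every structural condition because $A_n\cdot A_m\subseteq A_{n+m}$ and $d$ has degree $-1$, so no nontrivial constraint ever targets a nonzero element of $\Lambda$ from a degree $A$ cannot reach.

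The main obstacle, and really the only substantive point, is the invocation of algebraic closure to pin down the character $\alpha$ uniquely; everything else is a bookkeeping exercise in the Leibniz rule and the fact that $M$ is an ideal. This also explains the blanket assumption on $k$ in the paper: without algebraic closure, $A_0/M_0$ could be a larger field extension of $k$, allowing multiple characters to $k$ (or none) and destroying terminality of $\Lambda$.
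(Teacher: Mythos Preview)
The paper does not supply a proof of this proposition; it is recalled from \cite{chuang_laz_mannan} and marked with a bare \qed. So there is nothing to compare against, and the question is simply whether your argument stands on its own. It does, with two small points worth tightening.

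First, the assertion that ``any unital character $A_0\to k$ is forced to kill $M_0$'' is the heart of uniqueness, and it needs a word of justification beyond the decomposition $A_0=k\cdot 1\oplus M_0$. What you want is that $A_0$ is itself local with maximal ideal $M_0$. This follows from locality of $A$: if $a\in A_0\setminus M_0$ then $a\notin M$, hence $a$ is a unit in $A$; its inverse is automatically homogeneous of degree $0$, so $a$ is already a unit in $A_0$. Thus $M_0$ is the unique maximal ideal of $A_0$ and every continuous character has kernel exactly $M_0$.

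Second, your closing sentence about ``no nontrivial constraint ever targets a nonzero element of $\Lambda$ from a degree $A$ cannot reach'' is slightly too breezy. Products such as $A_i\cdot A_{-i}\to A_0$ or $A_i\cdot A_{1-i}\to A_1$ with $i\notin\{0,1\}$ do land in degrees where $f$ is nonzero; you need to check these products lie in $M$. They do, since $A_n=M_n$ for all $n\neq 0$ (because $A/M\cong k$ is concentrated in degree $0$), and $M$ is an ideal. With that observation both $\alpha$ and $\beta$ vanish on such products, as required. Once these two points are made explicit, your proof is complete.
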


\begin{definition}\label{def_model_for_extended}
A morphism, $f\colon A\to B$, of $\mathcal{E}$ is said to be a
\begin{itemize}
\item weak equivalence if, and only if, it is a weak equivalence in the Hinich category or any morphism of acyclic algebras;
\item fibration if, and only if, $B^H$ is contained within its image;
\item cofibration if, and only if, it is a retract of a morphism in the class consisting of the tensor products of cofibrations in the Hinich category with:
\begin{itemize}
\item the identity $k\to k$;
\item the identity $\Lambda \to \Lambda$;
\item the natural inclusion $k\hookrightarrow\Lambda $.
\end{itemize}
\end{itemize}
\end{definition}

\begin{remark}
Definition \ref{def_model_for_extended} provides an extension of the model structure given by Hinich \cite{hinich_stacks} for the Hinich category. It is in fact the unique one with this choice of weak equivalence, having all surjective morphisms being fibrations, and $\Lambda$ being cofibrant; c.f.\ \cite{chuang_laz_mannan}.
\end{remark}

Definition \ref{def_model_for_extended} provides a model structure for $\mathcal{E}$ making it a CMC (c.f.\ \cite[Theorem 3.17]{chuang_laz_mannan}) and hence provides a model structure for $\mathcal{A}$ via Definition \ref{def_classes_prod} making it a CMC, since $\mathcal{A}$ is equivalent to $\Prod (\mathcal{E})$.

In the closing of this section the following important observation is made. This observation is useful in the proof of Theorem \ref{thm_rep_def}.

\begin{proposition}\label{prop_algebra_morphisms}
There exists an isomorphism of sets
\[
\Hom_\mathcal{A} \left(\prod_{i\in I} A_i,\prod_{j\in J}B_j\right)\cong \prod_{j\in J} \coprod_{i\in I} \Hom_\mathcal{E} (A_i,B_j).
\]\qed
\end{proposition}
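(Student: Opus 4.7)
The plan is to reduce the statement to the equivalence $\mathcal{A}\simeq\Prod(\mathcal{E})$ recalled at the beginning of this section, after which the identity becomes a direct unpacking of the definition of morphisms in a category of formal products. Since every pseudo-compact cdga is (uniquely up to isomorphism) a direct product of local pseudo-compact cdgas, I may treat $\prod_{i\in I}A_i$ and $\prod_{j\in J}B_j$ as the corresponding objects in $\Prod(\mathcal{E})$, so that the computation of $\Hom$ takes place in $\Prod(\mathcal{E})$.

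Next, I apply the definition of a morphism in $\Prod(\mathcal{E})$ from Section \ref{sec_formal_prod}. Such a morphism $f\colon\prod_{i\in I}A_i\to\prod_{j\in J}B_j$ is, by definition, the data of a map of sets $\varphi\colon J\to I$, written $j\mapsto i_j$, together with, for each $j\in J$, a morphism $f_j\colon A_{i_j}\to B_j$ in $\mathcal{E}$. Two such data $(\varphi,(f_j))$ and $(\varphi',(f'_j))$ yield the same morphism in $\Prod(\mathcal{E})$ iff $\varphi=\varphi'$ and $f_j=f'_j$ for every $j$, so the set of morphisms is in natural bijection with the set of pairs $(\varphi,(f_j)_{j\in J})$ subject only to $f_j\in\Hom_\mathcal{E}(A_{\varphi(j)},B_j)$.

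Now I recognise the latter set componentwise in $j$. For each fixed $j\in J$, the data of an index $i_j\in I$ together with a morphism $f_j\in\Hom_\mathcal{E}(A_{i_j},B_j)$ is, tautologically, the data of an element of the disjoint union $\coprod_{i\in I}\Hom_\mathcal{E}(A_i,B_j)$. Assembling these independent choices over all $j\in J$ identifies the set of morphisms with $\prod_{j\in J}\coprod_{i\in I}\Hom_\mathcal{E}(A_i,B_j)$, and the assignment $f\mapsto((i_j,f_j))_{j\in J}$ is manifestly a bijection, with inverse given by reading off the index and morphism in each coordinate. This establishes the claimed isomorphism.

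There is no real obstacle: the content is purely a careful bookkeeping exercise, relying only on (i) the equivalence $\mathcal{A}\simeq\Prod(\mathcal{E})$ and (ii) the definition that a $\Prod$-morphism is a map of indexing sets together with a family of componentwise morphisms. The only point at which to be attentive is that the set-theoretic direction of the indexing map is reversed (from $J$ to $I$), which is exactly why the outer $\prod$ is indexed by $J$ and the inner $\coprod$ by $I$ on the right-hand side.
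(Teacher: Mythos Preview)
Your proof is correct and is precisely the intended argument: the paper states this proposition with a bare \qed, treating it as immediate from the equivalence $\mathcal{A}\simeq\Prod(\mathcal{E})$ together with the definition of morphisms in $\Prod(\mathcal{E})$, which is exactly what you unpack.
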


\section{The category of marked Lie algebras}\label{sec_marked_lie}

Here the category of marked Lie algebras is introduced and some basic properties discussed, including defining a model structure (Definition \ref{def_classes}). In Section \ref{sec_equiv}, the theory developed within this Section is used to prove the category of marked curved Lie algebras is Quillen equivalent to $\Prod(\mathcal{E})$, or equivalently $\mathcal{A}$, (Theorem \ref{thm_main}).

\subsection{Basic definitions}

Similar to \cite{chuang_laz_mannan}, this paper works with curved Lie algebras, but a key difference is the morphisms are curved: further the Lie algebras also possess a set of marked points. Despite this contrast, a lot of the results proven in \cite{chuang_laz_mannan} prove useful in this setting.

\begin{definition}\label{def_curvedLie}
A curved Lie algebra is a triple $(\mathfrak{g}, d, \omega )$ where $\mathfrak{g}$ is a graded Lie algebra, $d$ is a derivation with $|d|={-1}$, and $\omega \in \mathfrak{g}_{-2}$ such that $d\omega =0$ and $d^2 x = [\omega ,x ]$ for all $x\in\mathfrak{g}$. The element $\omega$ is known as the curvature.
\end{definition}

\begin{remark}
The derivation of a curved Lie algebra is often referred to as the differential, but this is an abuse of notation since it need not square to zero (unless the curvature is zero). Despite this, the term `differential' is used within this paper.
\end{remark}

\begin{definition}\label{def_morphism}
A curved morphism of curved Lie algebras is a pair
\[
(f, \alpha )\colon(\mathfrak{g},d_\mathfrak{g},\omega_\mathfrak{g})\to(\mathfrak{h},d_\mathfrak{h},\omega_\mathfrak{h})
\]
where $f\colon\mathfrak{g}\to \mathfrak{h}$ is a graded Lie algebra morphism and $\alpha\in \mathfrak{h}_{-1}$ such that:
\begin{itemize}
\item $d_\mathfrak{h} f(x)=f(d_\mathfrak{g} x)+ [\alpha ,f(x)]$ for all $x\in\mathfrak{g}$, and
\item $\omega_\mathfrak{h}=f(\omega_\mathfrak{g})+d_\mathfrak{h} \alpha + \frac{1}{2}[\alpha,\alpha]$.
\end{itemize}
The image of the curved morphism $(f,\alpha )$ is given by $\lbrace f(x)-\alpha : x\in\mathfrak{g}\rbrace \subseteq \mathfrak{h}$. The composition of two curved morphisms, when it exists, is defined as $(f,\alpha)\circ(g,\beta)=(f\circ g,\alpha + f(\beta))$. A morphism with $\alpha=0$ is said to be strict.
\end{definition}

A strict morphism is given by a dgla morphism such that $f(\omega_\mathfrak{g})=\omega_\mathfrak{h}$; these are exactly the morphisms considered in \cite{chuang_laz_mannan}. Therefore, the $\alpha$ part of a curved morphism can be seen to deform how the differential and curvature commute with the graded Lie algebra morphism $f$. Strict morphisms are particularly useful since there exists a process to obtain a strict morphism from a curved one, c.f.\ Definition \ref{def_rectification}.

\begin{definition}
A curved isomorphism is a curved morphism 
\[
(f,\alpha )\colon(\mathfrak{g},d_\mathfrak{g},\omega_\mathfrak{g})\to(\mathfrak{h},d_\mathfrak{h},\omega_\mathfrak{h})
\]
with an inverse curved morphism
\[
(f,\alpha)^{-1}\colon(\mathfrak{h},d_\mathfrak{h},\omega_\mathfrak{h})\to(\mathfrak{g},d_\mathfrak{g},\omega_\mathfrak{g})
\]
such that
$(f,\alpha )\circ (f,\alpha )^{-1}=(id_\mathfrak{h},0)$ and $(f,\alpha )^{-1}\circ (f,\alpha )=(id_\mathfrak{g},0)$.
\end{definition}

\begin{remark}\label{rem_MC_twist}
Observe that a curved Lie algebra with non-zero curvature may be isomorphic to one with zero curvature (i.e.\ a dgla). Take the curved isomorphism
\[
(id,\xi)\colon(\mathfrak{g},d,\omega)\to\left(\mathfrak{g},d+ad_\xi,\omega+d\xi+\frac{1}{2}[\xi,\xi]\right)
\]
which has inverse $(id,-\xi)$. The codomain has zero curvature if, and only if, $\xi$ is a MC element of $(\mathfrak{g},d,\omega)$. Some curved Lie algebras, however, do not possess any MC elements, unlike dglas where $0$ is always a MC element. These morphisms correspond to twisting by $\xi$, written as $\mathfrak{g}^\xi$. More details concerning twisting can be found in \cite{braun}, and how it is generalised to $\operatorname{L}_\infty$-algebras can be found in \cite{chuang_laz}. It should be noted that neither of the cited sources use the notion of a curved morphism.
\end{remark}

\begin{proposition}\label{prop_MC_to_MC}
Given a curved morphism $(f,\alpha )\colon(\mathfrak{g},d_\mathfrak{g},\omega_\mathfrak{g})\to(\mathfrak{h},d_\mathfrak{h},\omega_\mathfrak{h})$ and a $\MC$ element $x$ of $(\mathfrak{g},d_\mathfrak{g},\omega_\mathfrak{g})$, then $(f,\alpha)(x)=f(x)-\alpha$ is a $\MC$ element of $(\mathfrak{h},d_\mathfrak{h},\omega_\mathfrak{h})$.
\end{proposition}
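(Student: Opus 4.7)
The plan is to verify the Maurer--Cartan equation for $f(x)-\alpha$ in $(\mathfrak{h},d_\mathfrak{h},\omega_\mathfrak{h})$ by direct computation, using the two structural identities defining a curved morphism together with the fact that $f$ is a graded Lie algebra morphism. Concretely, I need to show
\[
\omega_\mathfrak{h} + d_\mathfrak{h}\bigl(f(x)-\alpha\bigr) + \tfrac{1}{2}\bigl[f(x)-\alpha,\,f(x)-\alpha\bigr]=0,
\]
knowing by hypothesis that $\omega_\mathfrak{g}+d_\mathfrak{g} x+\tfrac{1}{2}[x,x]=0$.

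First I would expand the bracket by bilinearity, using the graded-symmetry of the bracket on odd elements (both $\alpha$ and $f(x)$ are of degree $-1$) to combine the two mixed cross-terms into a single $-[\alpha,f(x)]$ contribution. Next I would substitute the defining identities of the curved morphism, namely $d_\mathfrak{h} f(x)=f(d_\mathfrak{g} x)+[\alpha,f(x)]$ into the middle term, and $\omega_\mathfrak{h}=f(\omega_\mathfrak{g})+d_\mathfrak{h}\alpha+\tfrac{1}{2}[\alpha,\alpha]$ into the first term. Finally, using that $f$ preserves the bracket so $[f(x),f(x)]=f([x,x])$, I would collect terms: the $\pm d_\mathfrak{h}\alpha$ summands cancel, the $\pm[\alpha,f(x)]$ summands cancel, and the remaining pieces assemble into
\[
f\!\left(\omega_\mathfrak{g}+d_\mathfrak{g} x+\tfrac{1}{2}[x,x]\right),
\]
which vanishes because $x$ is a Maurer--Cartan element of $\mathfrak{g}$.

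There is no real obstacle here; the proof is a routine bookkeeping exercise. The only point that demands care is the sign arithmetic in the bracket expansion, which relies on the grading conventions of Definition \ref{def_curvedLie} and Definition \ref{def_morphism} — specifically the fact that both $\alpha$ and $f(x)$ lie in degree $-1$, so the graded-symmetry identity renders $[f(x),\alpha]$ and $[\alpha,f(x)]$ equal up to the sign dictated by the paper's convention, allowing all the $\alpha$-dependent terms to cancel cleanly against those produced by the curvature identity.
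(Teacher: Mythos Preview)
Your proposal is correct and is precisely the ``simple check'' the paper alludes to in its one-line proof; you have merely written out explicitly the substitution of the two curved-morphism identities and the cancellation of the $d_\mathfrak{h}\alpha$, $[\alpha,f(x)]$, and $\tfrac{1}{2}[\alpha,\alpha]$ terms that the paper leaves to the reader.
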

\begin{proof}
It is a simple check to see $f(x)-\alpha$ satisfies the $\MC$ equation.
\end{proof}

\begin{definition}
A curved Lie algebra, $(\mathfrak{g}, d,\omega )$, is said to be marked when it is equipped with a set of possibly non-distinct elements of $\mathfrak{g}_{-1}$ indexed by a non-empty set $I$, i.e.\ a set $\lbrace x_i \rbrace_{i\in I}$, with $|x_i|=-1$ for all $i\in I$.
\end{definition}

The marked points are of degree $-1$ and thus could be MC elements of the curved Lie algebra. Hence twisting by such an element results in the Lie algebra having zero curvature as shown in Remark \ref{rem_MC_twist}.

For the sake of brevity, a marked curved Lie algebra is here onwards denoted by its underlying graded Lie algebra and set of marked points, omitting the differential and curvature.

\begin{definition}\label{def_marked_morphism}
A morphism of marked curved Lie algebras is a curved Lie algebra morphism $(f,\alpha)\colon(\mathfrak{g},\lbrace
x_i \rbrace_{i\in I} )\to (\mathfrak{h},\lbrace y_j\rbrace_{j\in J})$ which induces a well defined morphism of sets $\lbrace x_i \rbrace_{i\in I}\to \lbrace y_j \rbrace_{j\in J}$. The morphism $(f,\alpha )$ is an isomorphism of marked curved Lie algebras if $(f,\alpha )$ is an isomorphism of curved Lie algebras and the induced morphism on sets is a bijection.
\end{definition}

\begin{definition}
The category of marked curved Lie algebras, denoted $\curvedLie$, is the category whose objects are marked curved Lie algebras and morphisms are those of Definition \ref{def_marked_morphism}.
\end{definition}

\begin{definition}
A marked curved Lie algebra $((\mathfrak{h}, d,\omega ),\lbrace x_i \rbrace_{i\in J})$ is a marked curved Lie subalgebra of $((\mathfrak{g}, d, \omega ),\lbrace x_i \rbrace_{i\in I})$ if $\mathfrak{h}\subseteq\mathfrak{g}$ as graded Lie algebras and $J\subseteq I$ as sets.
\end{definition}

\begin{proposition}\label{prop_rect}
Given a curved Lie algebra morphism 
\[
(f,\alpha )\colon(\mathfrak{g}, d_\mathfrak{g},\omega_\mathfrak{g})\to(\mathfrak{h},d_\mathfrak{h},\omega_\mathfrak{h}),
\]
taking $x\in \mathfrak{g}_{-1}$ and denoting $(f,\alpha)(x)=y$, a strict morphism is given by
\[
(id_\mathfrak{h},y)\circ (f,\alpha)\circ(id_\mathfrak{g},-x)\colon \mathfrak{g}^{x}\to\mathfrak{h}^{y}.
\]
\end{proposition}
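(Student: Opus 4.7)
My plan is to prove this by direct computation, using the composition rule of Definition \ref{def_morphism} together with the description of twisted curved Lie algebras in Remark \ref{rem_MC_twist}. The first step is to identify the three factors in the composition. By Remark \ref{rem_MC_twist}, $(id_\mathfrak{g},-x)$ is the curved isomorphism $\mathfrak{g}^x\to\mathfrak{g}$ inverse to the twist $(id_\mathfrak{g},x)\colon\mathfrak{g}\to\mathfrak{g}^x$, and $(id_\mathfrak{h},y)$ is the twist $\mathfrak{h}\to\mathfrak{h}^y$. The middle factor is the given curved morphism $(f,\alpha)\colon\mathfrak{g}\to\mathfrak{h}$. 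The sources and targets thus line up, so the composition is well-defined as a curved morphism $\mathfrak{g}^x\to\mathfrak{h}^y$.

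The second step is to apply the composition rule $(g_1,\beta_1)\circ(g_2,\beta_2)=(g_1\circ g_2,\beta_1+g_1(\beta_2))$ twice. Computing from the right, $(f,\alpha)\circ(id_\mathfrak{g},-x)=(f,\alpha-f(x))=(f,-y)$, and then $(id_\mathfrak{h},y)\circ(f,-y)=(f,y-y)=(f,0)$, where the final equality uses $y=(f,\alpha)(x)=f(x)-\alpha$. Since the $\alpha$-component of the resulting morphism vanishes, it is strict by the last sentence of Definition \ref{def_morphism}.

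There is no genuine obstacle to address: the content of the proposition is the \emph{observation} that bracketing $(f,\alpha)$ between the twists $(id_\mathfrak{g},-x)$ on the right and $(id_\mathfrak{h},y)$ on the left cancels its curved component, the cancellation being built into the definition $y=(f,\alpha)(x)$. This rectification construction, which depends on the choice of the degree $-1$ element $x$, is precisely what motivates introducing the marked structure in the remainder of the section and underlies Definition \ref{def_rectification}.
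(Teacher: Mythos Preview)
Your proof is correct and follows essentially the same approach as the paper's own proof, which is the one-line computation $(id_\mathfrak{h},y)\circ (f,\alpha)\circ(id_\mathfrak{g},-x)=(f,y+\alpha-f(x))=(f,0)$. You simply associate from the right rather than expanding the triple composition at once, and add a helpful remark on why the domains and codomains line up; there is no substantive difference.
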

\begin{proof}
$(id_\mathfrak{h},y)\circ (f,\alpha)\circ(id_\mathfrak{g},-x)=(f,y+\alpha-f(x))=(f,0).$
\end{proof}

Specialising Proposition \ref{prop_rect} to marked curved Lie algebras, recall that given 
\[
(f,\alpha)\colon (\mathfrak{g},\lbrace
x_i \rbrace_{i\in I} )\to (\mathfrak{h},\lbrace y_j\rbrace_{j\in J}),
\]
for all $i\in I$ there exists $j_i \in J$ such that $(f,\alpha )(x_i)=y_{j_i}$.

\begin{definition}\label{def_rectification}
Given a morphism $(f,\alpha )\colon (\mathfrak{g},\lbrace x_i \rbrace_{i\in I})\to(\mathfrak{h},\lbrace y_j \rbrace_{j\in J})$ and fixing some $k\in I$, call the morphism $\mathfrak{g}^{x_k}\to\mathfrak{h}^{y_k}$ obtained analogously to Proposition \ref{prop_rect} the \emph{rectification by the marked point $x_k$}.
\end{definition}

\begin{proposition}\label{prop_rect_square}
Any commutative diagram of marked curved Lie algebras with an initial vertex (one such that there exists no morphisms into it, and there exists a unique morphism, up to commutativity, to every other vertex) is isomorphic to a commutative diagram with strict morphisms.
\end{proposition}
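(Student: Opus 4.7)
The plan is to strictify every morphism of the diagram simultaneously, by routing through a single marked point of the initial vertex and twisting every vertex by its image there.

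First fix the initial vertex $(\mathfrak{g}_0,\{x_i\}_{i\in I_0})$ and choose any marked point $x_k\in\mathfrak{g}_0$ (possible because marked sets are non-empty). For each vertex $v$ of the diagram let $(f_v,\alpha_v)\colon\mathfrak{g}_0\to\mathfrak{g}_v$ denote the unique-up-to-commutativity morphism supplied by initiality and set
\[
y_v:=(f_v,\alpha_v)(x_k)=f_v(x_k)-\alpha_v,
\]
with the convention $y_0=x_k$. Because $(f_v,\alpha_v)$ is a morphism of \emph{marked} curved Lie algebras, each $y_v$ lies in the marked set of $\mathfrak{g}_v$ and has degree $-1$, so twisting by $y_v$ makes sense.

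I would then construct a new diagram $\mathcal{D}'$ of the same shape as the original diagram $\mathcal{D}$, replacing each vertex $\mathfrak{g}_v$ by the twist $\mathfrak{g}_v^{y_v}$ (which has the same underlying graded Lie algebra, hence the same marked set) and replacing each morphism $(g,\beta)\colon\mathfrak{g}_v\to\mathfrak{g}_w$ of $\mathcal{D}$ by its rectification in the sense of Definition \ref{def_rectification}:
\[
(id,y_w)\circ(g,\beta)\circ(id,-y_v)=\bigl(g,\,y_w+\beta-g(y_v)\bigr)\colon\mathfrak{g}_v^{y_v}\to\mathfrak{g}_w^{y_w}.
\]
The curved isomorphisms $(id,-y_v)\colon\mathfrak{g}_v^{y_v}\to\mathfrak{g}_v$ of Remark \ref{rem_MC_twist} assemble into a vertex-wise isomorphism $\mathcal{D}'\xrightarrow{\sim}\mathcal{D}$, and since each edge of $\mathcal{D}'$ is defined as the conjugate of the corresponding edge of $\mathcal{D}$, commutativity of $\mathcal{D}'$ is automatic.

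The only substantive point to verify is that each new morphism is strict, i.e.\ that $y_w+\beta-g(y_v)=0$. This is exactly where initiality is used: commutativity of the triangle $(g,\beta)\circ(f_v,\alpha_v)=(f_w,\alpha_w)$ in $\mathcal{D}$, evaluated at $x_k$, gives $(g,\beta)(y_v)=g(y_v)-\beta=y_w$, as required. The main obstacle is really just bookkeeping---ensuring the separately chosen twists at different vertices are mutually compatible---and this is precisely what the single marked point $x_k$ of the initial vertex arranges.
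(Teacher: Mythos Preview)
Your proof is correct and is exactly the paper's approach---choose a marked point at the initial vertex, push it forward along the unique morphisms to every other vertex, and rectify every edge by those induced points---spelled out in full detail where the paper gives only two sentences. One small correction: the marked set of the twist $\mathfrak{g}_v^{y_v}$ should be the shifted set $\{z_j-y_v\}_j$ rather than the original one (compare the paper's later use of $(\mathfrak{g}^{x_k},\{x_i-x_k\}_{i\in I})$), so that the conjugating isomorphism $(id,-y_v)\colon\mathfrak{g}_v^{y_v}\to\mathfrak{g}_v$ is itself a morphism of \emph{marked} curved Lie algebras.
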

\begin{proof}
Choosing a marked point in the initial vertex induces a choice of marked point at every other vertex by images. Rectifying each morphism by these marked points completes the proof.
\end{proof}

Proposition \ref{prop_rect_square} implies that when considering decompositions of morphisms, commutative squares, and lifting problems it is sufficient to consider strict morphisms.

\subsection{Small limits and colimits}\label{sec_lim}

The category $\curvedLie$ does not possess an initial object, and, therefore, an initial object is formally added to $\curvedLie$. The resulting category is denoted by $\curvedLie_*$. It is assumed this initial object has no marked points to echo the terminal object of $\Prod(\mathcal{E})$ having no components, see Remark \ref{rem_terminal_in_prod}. The zero curved Lie algebra $((0,0,0),\lbrace 0 \rbrace)$ is the terminal object for $\curvedLie_*$.

\begin{proposition}
The product in $\curvedLie_*$ is given by the Cartesian product of the underlying graded Lie algebras, the differential is given by specialising to each component, the curvature is given by the Cartesian product of the curvature of each component, and the Cartesian product of the sets of marked points. The projection morphisms are those morphisms projecting onto each component by the identity.
\end{proposition}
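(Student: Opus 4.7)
The plan is to verify the three defining features in turn: that the proposed construction genuinely lives in $\curvedLie_*$, that the projections are bona fide morphisms of marked curved Lie algebras, and that the universal property of a product holds. Write the candidate product as $(\mathfrak{g}, d, \omega, \{z_s\}_{s \in S})$ where $\mathfrak{g} = \prod_\lambda \mathfrak{g}_\lambda$, $d$ and $\omega$ are component-wise, and $S = \prod_\lambda I_\lambda$ is the Cartesian product of the index sets of the marked points.

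First I would check this is a marked curved Lie algebra: the Cartesian product of graded Lie algebras is a graded Lie algebra, $d$ defined coordinate-wise is a degree $-1$ derivation, $\omega = (\omega_\lambda)$ lies in degree $-2$, and both identities $d\omega = 0$ and $d^2 x = [\omega, x]$ reduce coordinate-wise to the corresponding identities in each factor $\mathfrak{g}_\lambda$. The marked points, being tuples in degree $-1$, fulfil the marked-Lie requirement. Next, the coordinate projections $\pi_\lambda \colon \mathfrak{g} \to \mathfrak{g}_\lambda$ are graded Lie algebra homomorphisms that intertwine the differentials exactly and send $\omega$ to $\omega_\lambda$, so $(\pi_\lambda, 0)$ is a strict curved morphism in the sense of Definition \ref{def_morphism}. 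Moreover, the set map induced on marked points is precisely projection $\prod_\lambda I_\lambda \to I_\lambda$, which is well-defined.

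For the universal property, suppose $(\mathfrak{h}, \{y_j\}_{j \in J})$ is an object of $\curvedLie_*$ equipped with morphisms $(f_\lambda, \alpha_\lambda)\colon \mathfrak{h} \to \mathfrak{g}_\lambda$ that each cover a set map $\varphi_\lambda \colon J \to I_\lambda$. Define $f\colon \mathfrak{h} \to \mathfrak{g}$ by $f(x) = (f_\lambda(x))_\lambda$ and $\alpha = (\alpha_\lambda)_\lambda \in \mathfrak{g}_{-1}$. A direct coordinate-wise inspection shows both axioms of Definition \ref{def_morphism} — the twisted intertwining $d f(x) = f(d_\mathfrak{h} x) + [\alpha, f(x)]$ and the curvature identity $\omega = f(\omega_\mathfrak{h}) + d\alpha + \tfrac{1}{2}[\alpha, \alpha]$ — hold because they hold in each factor. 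The induced set map on marked points is the product $(\varphi_\lambda)_\lambda \colon J \to \prod_\lambda I_\lambda$, which is well-defined. Composing with $(\pi_\lambda, 0)$ recovers $(f_\lambda, \alpha_\lambda)$ by the composition rule of Definition \ref{def_morphism}. Uniqueness is forced: any $(g, \beta)\colon \mathfrak{h} \to \mathfrak{g}$ with $(\pi_\lambda, 0) \circ (g, \beta) = (f_\lambda, \alpha_\lambda)$ must satisfy $\pi_\lambda \circ g = f_\lambda$ and $\pi_\lambda(\beta) = \alpha_\lambda$ for every $\lambda$, pinning down $g$ and $\beta$. Finally, the case involving the formally adjoined initial object is trivial since it admits unique morphisms out and is absorbed by the empty-indexing convention of Remark \ref{rem_terminal_in_prod}.

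No step here is a genuine obstacle; the only subtle point is bookkeeping the curved part $\alpha$ under projection — specifically that the coordinate projections are strict (have $\alpha_\lambda = 0$) so that the composition formula $(\pi_\lambda, 0)\circ (f, \alpha) = (\pi_\lambda \circ f, \pi_\lambda(\alpha))$ collapses to the desired $(f_\lambda, \alpha_\lambda)$. Everything else is a routine component-wise unwinding.
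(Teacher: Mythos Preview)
Your proof is correct and is precisely the ``straightforward check'' that the paper's own proof alludes to (the paper gives no further detail). The only quibble is your closing remark invoking Remark~\ref{rem_terminal_in_prod}: that remark concerns the terminal object of $\Prod(\mathcal{C})$, not the formally adjoined initial object of $\curvedLie_*$, so the reference is slightly off---but this does not affect the substance of the argument, since products involving the initial object can be handled directly (any such product is again the initial object, as $\Hom(Y,*)=\emptyset$ for $Y\neq *$).
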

\begin{proof}
It is a straightforward check.
\end{proof}

\begin{proposition}
The equaliser of two parallel morphisms
\[
(f,\alpha),(g,\beta)\colon(\mathfrak{g},\lbrace x_i \rbrace_{i\in I} )\to (\mathfrak{h}, \lbrace y_j \rbrace_{j\in J})
\]
is the largest curved Lie subalgebra $(\mathfrak{x},\lbrace x_i \rbrace_{i\in I^\prime})\subseteq(\mathfrak{g},\lbrace x_i \rbrace_{i\in I})$ upon which the two morphisms agree, with the (strict) inclusion morphism $(\mathfrak{x},\lbrace x_i \rbrace_{i\in I^\prime})\hookrightarrow(\mathfrak{g},\lbrace x_i \rbrace_{i\in I})$.
\end{proposition}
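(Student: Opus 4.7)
The plan is to verify the universal property of the equaliser directly, using the composition formula of Definition \ref{def_morphism}. A curved morphism $(h,\gamma)\colon(\mathfrak{y},\{z_k\}_{k\in K})\to(\mathfrak{g},\{x_i\}_{i\in I})$ satisfies $(f,\alpha)\circ(h,\gamma)=(g,\beta)\circ(h,\gamma)$ precisely when $(f\circ h,\alpha+f(\gamma))=(g\circ h,\beta+g(\gamma))$, i.e.\ when both $f\circ h=g\circ h$ and $\alpha+f(\gamma)=\beta+g(\gamma)$ hold.

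Next I would verify that $\mathfrak{x}:=\{x\in\mathfrak{g}:f(x)=g(x)\}$ is a curved Lie subalgebra of $\mathfrak{g}$. Closure under the bracket is immediate from $f$ and $g$ being graded Lie algebra morphisms. Applying the first axiom of Definition \ref{def_morphism} to each of $f$ and $g$ and subtracting yields $f(d_\mathfrak{g} x)-g(d_\mathfrak{g} x)=[\beta-\alpha,f(x)]$ for $x\in\mathfrak{x}$, which vanishes when $\alpha=\beta$; this compatibility is in any case forced, since otherwise the restricted morphisms $(f,\alpha)|_\mathfrak{x}$ and $(g,\beta)|_\mathfrak{x}$ cannot be equal as curved morphisms. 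Subtracting the curvature axioms similarly gives $f(\omega_\mathfrak{g})=g(\omega_\mathfrak{g})$ under the same hypothesis, so $\omega_\mathfrak{g}\in\mathfrak{x}$. Setting $I^\prime:=\{i\in I:x_i\in\mathfrak{x}\}$ equips $\mathfrak{x}$ with inherited marked points, and the strict inclusion $(\iota,0)\colon(\mathfrak{x},\{x_i\}_{i\in I^\prime})\hookrightarrow(\mathfrak{g},\{x_i\}_{i\in I})$ tautologically equalises $(f,\alpha)$ and $(g,\beta)$.

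For the universal property, given any equalising $(h,\gamma)$, the first condition above is equivalent to $h(\mathfrak{y})\subseteq\mathfrak{x}$ while (using $\alpha=\beta$) the second is equivalent to $\gamma\in\mathfrak{x}$. The unique factorisation through $(\iota,0)$ is then $(h,\gamma)=(\iota,0)\circ(h^\prime,\gamma)$, where $h^\prime$ is the corestriction of $h$ onto $\mathfrak{x}$; uniqueness follows from injectivity of $\iota$, which pins down both the Lie-algebra component and the $\gamma$-component in the subobject. The main subtlety I expect to flag is the boundary situation: when $\alpha\neq\beta$, no non-trivial restriction can agree, and when $I^\prime$ happens to be empty, $\mathfrak{x}$ is not itself an object of $\curvedLie$; in both cases the equaliser should be interpreted as the formally adjoined initial object of $\curvedLie_*$ just introduced in this section.
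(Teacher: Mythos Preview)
The paper's own proof is the single sentence ``An exercise in chasing the definitions,'' so your write-up is a detailed expansion of that same approach; your verification of the universal property when $\alpha=\beta$ is correct and is exactly what the paper has in mind.

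Your treatment of the boundary case $\alpha\neq\beta$ contains a gap, however. You conclude that the equaliser must be the formally adjoined initial object, but this does not follow: you have only shown that no \emph{strict} inclusion can equalise, whereas curved equalising morphisms $(h,\gamma)$ with $f(\gamma)-g(\gamma)=\beta-\alpha$ may still exist. Tracing the marked-point constraint $h(z_k)-\gamma\in\{x_i\}$ together with $h(z_k)\in\mathfrak{x}$, one finds such morphisms exist precisely when some marked point $x_{i_0}$ satisfies $(f,\alpha)(x_{i_0})=(g,\beta)(x_{i_0})$; in that case the equaliser is nontrivial --- it is the graded subspace $\{x:f(x)=g(x)\}$ sitting inside the twist $\mathfrak{g}^{x_{i_0}}$, included into $\mathfrak{g}$ via a curved (not strict) morphism. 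The paper's one-line proof does not address this case either, so the statement is best read under the tacit convention (cf.\ Proposition~\ref{prop_rect_square}) that the parallel pair has first been rectified to strict morphisms, after which your argument applies verbatim.
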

\begin{proof}
An exercise in chasing the definitions.
\end{proof}

\begin{proposition}\label{prop_coprod}
The coproduct in the category of marked curved Lie algebras is easiest to describe in the binary case: let $(\mathfrak{g},\lbrace x_i \rbrace_{i\in I})$ and $(\mathfrak{h},\lbrace y_j \rbrace_{j\in J})$ be marked curved Lie algebras, the coproduct $(\mathfrak{g},\lbrace x_i \rbrace_{i\in I})\coprod(\mathfrak{h},\lbrace y_j \rbrace_{j\in J})$ has as its underlying graded Lie algebra the free Lie algebra on $\mathfrak{g}$, $\mathfrak{h}$, and a formal element $z$ of degree $-1$. The differential is given by the rules: $d|_\mathfrak{g} = d_{\mathfrak{g}}$, $d|_\mathfrak{h} = d_\mathfrak{h} - ad_z$ and $dz=\omega_\mathfrak{h} - \omega_\mathfrak{g} - \frac{1}{2}[z,z]$. The set of marked points is given by the union of sets $\lbrace x_i \rbrace_{i\in I} \cup \lbrace y_j + z \rbrace_{j\in J}$. The resulting space has curvature equal to that of $\mathfrak{g}$. The two inclusion morphisms are given by
\[
(id_\mathfrak{g},0)\colon (\mathfrak{g},\lbrace x_i \rbrace_{i\in I}) \hookrightarrow (\mathfrak{g},\lbrace x_i \rbrace_{i\in I})\coprod(\mathfrak{h},\lbrace y_j \rbrace_{j\in J})
\]
and
\[
(id_\mathfrak{h},-z)\colon (\mathfrak{h},\lbrace y_j \rbrace_{j\in J}) \hookrightarrow (\mathfrak{g},\lbrace x_i \rbrace_{i\in I})\coprod(\mathfrak{h},\lbrace y_j \rbrace_{j\in J}).
\]
\end{proposition}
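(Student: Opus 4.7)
My plan is to verify three things: that the proposed structure is a well-defined marked curved Lie algebra, that the two given inclusions are morphisms in $\curvedLie$, and that the universal property of a coproduct holds; extension from the binary case to arbitrary small coproducts then follows by standard iteration.

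To verify the curved Lie algebra axiom $d^2 x = [\omega_\mathfrak{g}, x]$ on the coproduct, it suffices by the graded derivation property to check on the generators $\mathfrak{g}$, $\mathfrak{h}$, and $z$. The case $x \in \mathfrak{g}$ is immediate. For $x \in \mathfrak{h}$, expanding $d^2 x$ using the rule $d|_\mathfrak{h} = d_\mathfrak{h} - ad_z$, the graded derivation rule, and the Jacobi-derived identity $[z, [z, x]] = \frac{1}{2}[[z, z], x]$ reduces, after substituting the formula for $dz$, to the required $[\omega_\mathfrak{g}, x]$. For $x = z$, the calculation uses the same ingredients together with $[[z, z], z] = 0$ (graded Jacobi for the odd element $z$) and the closures $d_\mathfrak{g}\omega_\mathfrak{g} = 0 = d_\mathfrak{h}\omega_\mathfrak{h}$. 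The first inclusion $(id_\mathfrak{g}, 0)$ is strict by construction, with curvature condition reading off as $\omega = \omega_\mathfrak{g}$. The second inclusion $(id_\mathfrak{h}, -z)$ is verified against Definition \ref{def_morphism}: the differential axiom is tautological from $d|_\mathfrak{h} = d_\mathfrak{h} - ad_z$, and the curvature axiom is the defining formula for $dz$. Compatibility with marked points is immediate from the set $\{x_i\}_{i\in I} \cup \{y_j + z\}_{j\in J}$.

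For the universal property, fix morphisms $(p, \mu)\colon(\mathfrak{g}, \{x_i\}) \to (\mathfrak{k}, \{w_l\})$ and $(q, \nu)\colon(\mathfrak{h}, \{y_j\}) \to (\mathfrak{k}, \{w_l\})$ in $\curvedLie$. Applying the composition formula of Definition \ref{def_morphism} to the required factorisations through the two inclusions forces any mediating morphism $(h, \gamma)$ to satisfy $\gamma = \mu$, $h|_\mathfrak{g} = p$, $h|_\mathfrak{h} = q$, and $h(z) = \mu - \nu$; this determines $h$ uniquely on the underlying free graded Lie algebra. The curved morphism axioms for $(h, \mu)$ then reduce on $\mathfrak{g}$ to those for $(p, \mu)$; on $\mathfrak{h}$, after absorbing the twist by $-z$, to those for $(q, \nu)$; and on $z$ to a direct combination of the two curvature equations. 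The images of the marked points $x_i$ and $y_j + z$ under $(h, \mu)$ compute as $(p, \mu)(x_i)$ and $(q, \nu)(y_j)$ respectively, both of which lie in $\{w_l\}$ by hypothesis. The main obstacle throughout is careful sign bookkeeping for the odd generator $z$, especially in checking $d^2 z = [\omega_\mathfrak{g}, z]$ and the curvature condition on $z$ for the mediating morphism; once the signs are handled each step is a short algebraic consequence of the hypotheses.
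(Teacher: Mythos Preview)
Your proposal is correct and follows essentially the same approach as the paper's proof: construct the mediating morphism by setting $h|_\mathfrak{g}=p$, $h|_\mathfrak{h}=q$, $h(z)=\mu-\nu$, and $\gamma=\mu$, then verify the curved morphism axioms and marked-point compatibility. The paper is considerably terser, asserting well-definedness and uniqueness as clear or quick checks, whereas you have sketched the generator-by-generator verifications that the paper omits.
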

\begin{proof}
It is clear $(\mathfrak{g},\lbrace x_i \rbrace_{i\in I})\coprod(\mathfrak{h},\lbrace y_j \rbrace_{j\in J})$ is a well defined marked curved Lie algebra. Given a marked curved Lie algebra $(\mathfrak{a}, \lbrace z_k \rbrace_{k\in K})$ and morphisms
\[
(f_\mathfrak{g},\alpha):(\mathfrak{g},\lbrace x_i \rbrace_{i\in I})\to (\mathfrak{a}, \lbrace z_k \rbrace_{k\in K})
\] and 
\[
(f_\mathfrak{h},\beta):(\mathfrak{h},\lbrace y_j \rbrace_{j\in J})\to (\mathfrak{a},\lbrace z_k \rbrace_{k\in K}),
\]
define $(f,\alpha):(\mathfrak{g},\lbrace x_i \rbrace_{i\in I})\coprod (\mathfrak{h},\lbrace y_j \rbrace_{j\in J})\to (\mathfrak{a},\lbrace z_k \rbrace_{k\in K})$ by $f|_\mathfrak{g}=f_\mathfrak{g}$, $f|_\mathfrak{h}=f_\mathfrak{h}$, and $f(z)=\alpha -\beta$. Clearly $(f,\alpha )$ is a well defined morphism and the diagram
\begin{center}
\begin{tikzpicture}
\matrix (m) [matrix of math nodes, column sep=3.5em, row sep=3.5em]{
 & (\mathfrak{a},\lbrace z_k \rbrace_{k\in K}) & \\
(\mathfrak{g},\lbrace x_i \rbrace_{i\in I}) & (\mathfrak{g},\lbrace x_i \rbrace_{i\in I})\coprod (\mathfrak{h},\lbrace y_j \rbrace_{j\in J}) & (\mathfrak{h},\lbrace y_j \rbrace_{j\in J}) \\};
\path[->]
(m-2-1) edge node[above,xshift=-0.5cm] {$(f_\mathfrak{g},\alpha)$} (m-1-2)
(m-2-2) edge node[left,yshift=-0.3cm] {$(f,\alpha)$} (m-1-2)
(m-2-3) edge node[above,xshift=0.5cm] {$(f_\mathfrak{h},\beta )$} (m-1-2)
(m-2-1) edge (m-2-2)
(m-2-3) edge (m-2-2);
\end{tikzpicture}
\end{center}
commutes. Uniqueness of this construction is a quick check.
\end{proof}

The coproduct given in Proposition \ref{prop_coprod} is similar to the disjoint product of \cite{laz_markl}: it can be informally thought of as taking the disjoint union of the two marked curved Lie algebras, adding a formal MC element, and then twisting the copy of $\mathfrak{h}$ with this formal element to flatten its curvature.

\begin{proposition}
The coequaliser of two parallel morphisms
\[
(f,\alpha), (g,\beta)\colon(\mathfrak{g},\lbrace x_i \rbrace_{i\in I})\to(\mathfrak{h},\lbrace y_j \rbrace_{j\in J})
\]
is the quotient of $\mathfrak{h}$ by the ideal generated by $f(x)-g(x)$ and $\alpha -\beta$, for all $x\in\mathfrak{g}$ with the set of marked points being the quotient in a similar manner.
\end{proposition}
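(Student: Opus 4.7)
Let $\mathcal{I}\subseteq\mathfrak{h}$ denote the graded Lie ideal generated by the set $\{f(x)-g(x):x\in\mathfrak{g}\}\cup\{\alpha-\beta\}$. The plan is to show that $\mathcal{I}$ is closed under $d_\mathfrak{h}$, so that the quotient $\mathfrak{h}/\mathcal{I}$ inherits the structure of a curved Lie algebra (with curvature $[\omega_\mathfrak{h}]$ and marked points $\{[y_j]\}_{j\in J}$), and then to verify directly that the strict quotient morphism $(\pi,0)\colon\mathfrak{h}\to\mathfrak{h}/\mathcal{I}$ enjoys the universal property.

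The main computational step is the verification that $\mathcal{I}$ is $d_\mathfrak{h}$-stable. Since the Leibniz rule guarantees that the set of $d_\mathfrak{h}$-closed elements forms a Lie ideal once it contains the generators, it suffices to check this on the generators. For $x\in\mathfrak{g}$, the curved morphism identities give
\[
d_\mathfrak{h}\bigl(f(x)-g(x)\bigr)=\bigl(f(d_\mathfrak{g} x)-g(d_\mathfrak{g} x)\bigr)+[\alpha,f(x)-g(x)]+[\alpha-\beta,g(x)],
\]
each summand of which lies in $\mathcal{I}$. For the remaining generator, subtracting the two curvature identities
\[
\omega_\mathfrak{h}=f(\omega_\mathfrak{g})+d_\mathfrak{h}\alpha+\tfrac{1}{2}[\alpha,\alpha]=g(\omega_\mathfrak{g})+d_\mathfrak{h}\beta+\tfrac{1}{2}[\beta,\beta]
\]
expresses $d_\mathfrak{h}(\alpha-\beta)$ as $-(f(\omega_\mathfrak{g})-g(\omega_\mathfrak{g}))-\tfrac{1}{2}\bigl([\alpha,\alpha-\beta]+[\alpha-\beta,\beta]\bigr)$, again visibly in $\mathcal{I}$. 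It then follows immediately that $d_\mathfrak{h}$ descends to a degree $-1$ derivation of $\mathfrak{h}/\mathcal{I}$ still squaring to $[\omega_\mathfrak{h},-]$, so $\mathfrak{h}/\mathcal{I}$ is a curved Lie algebra and the quotient of the marked set yields a marking.

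The universal property is then formal. By construction $(\pi,0)\circ(f,\alpha)=(\pi\circ f,\pi(\alpha))$ and $(\pi,0)\circ(g,\beta)=(\pi\circ g,\pi(\beta))$ agree, because $\pi$ annihilates $f(x)-g(x)$ and $\alpha-\beta$. Conversely, given any $(h,\delta)\colon(\mathfrak{h},\{y_j\}_{j\in J})\to(\mathfrak{k},\{z_\ell\}_{\ell\in L})$ with $(h,\delta)\circ(f,\alpha)=(h,\delta)\circ(g,\beta)$, comparing components of the composition forces $h\circ f=h\circ g$ and $h(\alpha)=h(\beta)$, so $h$ kills all generators of $\mathcal{I}$ and therefore factors uniquely as a graded Lie algebra morphism $k\colon\mathfrak{h}/\mathcal{I}\to\mathfrak{k}$. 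Setting $\epsilon:=\delta$ makes $(k,\epsilon)$ into the unique curved morphism with $(k,\epsilon)\circ(\pi,0)=(h,\delta)$; the curved morphism axioms for $(k,\epsilon)$ are inherited from those of $(h,\delta)$ since $\pi$ is strict and surjective. Compatibility with the quotiented marking is automatic.

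I do not anticipate a serious obstacle: the content is the $d_\mathfrak{h}$-closure of $\mathcal{I}$, and this is a direct application of the two defining identities for curved morphisms. The one point to keep in mind is that the coequaliser morphism is strict (its $\alpha$-component is $0$); had we tried to make this component non-trivial, the universal property would fail to pin down a unique factorisation.
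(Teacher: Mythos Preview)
Your argument is correct and is precisely the ``painless check'' the paper alludes to: you make explicit the $d_\mathfrak{h}$-stability of the ideal and the universal property, both of which follow directly from the curved morphism identities. One small wording issue: where you write ``the set of $d_\mathfrak{h}$-closed elements forms a Lie ideal once it contains the generators,'' you mean that the Leibniz rule ensures an ideal is $d_\mathfrak{h}$-stable as soon as $d_\mathfrak{h}$ sends its generators back into it---but the computation you then carry out is exactly the right one.
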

\begin{proof}
A painless check.
\end{proof}

\begin{proposition}
The category $\curvedLie_*$ has all small limits and colimits.
\end{proposition}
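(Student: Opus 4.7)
The plan is to invoke the standard categorical fact that a category possessing all small products together with all equalizers of parallel pairs admits all small limits, and dually that all small coproducts together with all coequalizers of parallel pairs yields all small colimits. Since equalizers and coequalizers have already been constructed in the preceding propositions, the remaining task is to extend the binary product and binary coproduct constructions to arbitrary small indexing sets, and to cover the empty cases.

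For an arbitrary small product I would take the Cartesian product of the underlying graded Lie algebras, with componentwise bracket, differential, and curvature, and with set of marked points equal to the Cartesian product of the marking sets. The projections are strict morphisms, and the verification of the universal property carries over essentially unchanged from the binary case. The empty product is the terminal object $((0,0,0),\lbrace 0 \rbrace)$.

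For arbitrary coproducts I would generalise Proposition \ref{prop_coprod} directly. Given a small family $\lbrace (\mathfrak{g}_k, \lbrace x^k_i \rbrace_{i\in I_k}) \rbrace_{k\in K}$, fix a basepoint $k_0 \in K$ and form the free graded Lie algebra generated by the $\mathfrak{g}_k$ together with one formal degree $-1$ element $z_k$ for each $k\neq k_0$. Define $d|_{\mathfrak{g}_{k_0}} = d_{\mathfrak{g}_{k_0}}$, $d|_{\mathfrak{g}_k} = d_{\mathfrak{g}_k} - ad_{z_k}$, and $dz_k = \omega_{\mathfrak{g}_k} - \omega_{\mathfrak{g}_{k_0}} - \tfrac{1}{2}[z_k, z_k]$ for $k\neq k_0$, with the set of marked points taken to be $\lbrace x^{k_0}_i \rbrace_{i\in I_{k_0}} \cup \bigcup_{k\neq k_0}\lbrace x^k_i + z_k \rbrace_{i\in I_k}$. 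The universal property follows exactly as in Proposition \ref{prop_coprod}, with $z_k$ mapped to $\alpha_{k_0} - \alpha_k$ read off from the cocone data. The empty coproduct is the formally adjoined initial object of $\curvedLie_*$.

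The only place demanding care is checking that the twisting data in the coproduct is consistent across the whole family; apparent dependence on the choice of basepoint $k_0$ is dispelled by observing that any two such constructions are canonically curved-isomorphic via rectification (Definition \ref{def_rectification}). With products, equalizers, coproducts, and coequalizers all in hand, the classical construction of limits from products and equalizers, and of colimits from coproducts and coequalizers, completes the proof.
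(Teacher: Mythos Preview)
Your proposal is correct and follows the same approach as the paper: invoke the standard criterion (Mac Lane) that products plus equalizers yield all small limits, and coproducts plus coequalizers yield all small colimits. The paper's own proof is a one-line citation of this fact, whereas you additionally spell out the arbitrary-index coproduct (which the paper describes only in the binary case and leaves the general case implicit); this is added detail rather than a different argument.
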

\begin{proof}
$\curvedLie_*$ contains an initial object, terminal object, products, equalisers, coproducts, and coequalisers, this is sufficient c.f.\ \cite{cat_for_work}.
\end{proof}

\subsection{Model structure}

The category of curved Lie algebras with strict morphisms, denoted $\mathcal{G}$, plays an important role in defining a model structure for the category $\curvedLie_*$, therefore the model structure for $\mathcal{G}$ given in \cite{chuang_laz_mannan} is recalled.

\begin{definition}\label{def_classes_strict}
A morphism $f\colon\mathfrak{g}\to\mathfrak{h}$ of $\mathcal{G}$ is said to be
\begin{itemize}
\item a weak equivalence if, and only if, $f$ is either a quasi-isomorphism of dglas or any morphism between curved Lie algebras that have non-zero curvature.
\item a fibration if, and only if, it is surjective;
\item a cofibration if, and only if, it has the LLP with respect to
acyclic fibrations.
\end{itemize}
\end{definition}

Recall Definition \ref{def_rectification}: given a morphism $(f,\alpha)\colon(\mathfrak{g},\lbrace x_i \rbrace_{i\in I})\to (\mathfrak{h},\lbrace y_j \rbrace_{j\in J})$ such that $(f,\alpha )(x_k)=y_{j_k}$ for some $k\in I$, the rectification by the marked point $x_k$ gives a strict morphism $(f,0)\colon\mathfrak{g}^{x_k}\to\mathfrak{h}^{y_{j_k}}$. This latter morphism can clearly be considered as a morphism in the category $\mathcal{G}$ by forgetting marked points.

\begin{definition}\label{def_classes}
Excluding those containing the initial object, a morphism
\[
(f,\alpha)\colon(\mathfrak{g},\lbrace x_i \rbrace_{i\in I})\to (\mathfrak{h},\lbrace y_j \rbrace_{j\in J})
\]
of $\curvedLie_*$ is said to be
\begin{itemize}
\item a weak equivalence if, and only if, it induces a bijection of the marked points and the rectification by each marked point is a weak equivalence of $\mathcal{G}$;
\item a fibration if, and only if, the graded Lie algebra morphism $f$ is surjective;
\item a cofibration if, and only if, it has the LLP with respect to all acyclic fibrations.
\end{itemize}
The unique morphism from the initial object to itself belongs to all three classes. The unique morphism from the initial object to any given object of $\curvedLie_*$ is a cofibration.
\end{definition}

It is clear that each of the classes of morphism given in Definition \ref{def_classes} is closed under taking retracts, and they are also closed under rectification.

\begin{proposition}\label{prop_preserved_classes}
Excluding those containing the initial object, a morphism of $\curvedLie_*$ belongs to any one of the classes of Definition \ref{def_classes} if, and only if, the rectification by every marked point belongs to the same class.
\end{proposition}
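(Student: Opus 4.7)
The plan is to handle the three classes separately, relying on two common tools: first, any two rectifications of the same morphism $(f,\alpha)$ are intertwined by pre- and post-composition with curved twist isomorphisms $(id,\xi)$ of Remark \ref{rem_MC_twist}; second, Proposition \ref{prop_rect_square} allows any commutative diagram with an initial vertex to be strictified by choosing a marked point at that vertex.

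For fibrations the claim is nearly immediate: rectification by a marked point $x_k$ produces the strict morphism $(f,0)\colon\mathfrak{g}^{x_k}\to\mathfrak{h}^{y_{j_k}}$ whose underlying graded Lie algebra morphism is still $f$. Hence $f$ is surjective if and only if every rectification is surjective, i.e.\ a fibration of $\mathcal{G}$.

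For weak equivalences, the forward direction is Definition \ref{def_classes}. For the converse, given two marked points $x_k,x_l\in\mathfrak{g}$ with images $y_{j_k}=(f,\alpha)(x_k)$ and $y_{j_l}=(f,\alpha)(x_l)$, the twists $(id,x_l-x_k)\colon\mathfrak{g}^{x_k}\to\mathfrak{g}^{x_l}$ and $(id,y_{j_l}-y_{j_k})\colon\mathfrak{h}^{y_{j_k}}\to\mathfrak{h}^{y_{j_l}}$ are curved isomorphisms in $\mathcal{G}$ fitting the two rectifications into a commutative square. Since isomorphisms in $\mathcal{G}$ are weak equivalences, the two-out-of-three property shows that one rectification is a weak equivalence of $\mathcal{G}$ if and only if the other is, making the condition independent of the choice of marked point.

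For cofibrations I would use Proposition \ref{prop_rect_square} to transport lifting squares between $\curvedLie_*$ and $\mathcal{G}$. An arbitrary lifting square in $\curvedLie_*$ pitting $(f,\alpha)$ against an acyclic fibration strictifies, via the choice of a marked point in the top-left corner, to a square of strict morphisms in $\mathcal{G}$ whose left vertical is a rectification of $(f,\alpha)$ and whose right vertical is, by the fibration and weak equivalence cases already treated, an acyclic fibration of $\mathcal{G}$. A strict lift in $\mathcal{G}$ then produces a curved lift of the original square upon untwisting by the inverse twist isomorphisms; the converse direction is symmetric. The main obstacle I anticipate is this cofibration case, where careful bookkeeping with the twist isomorphisms is required to confirm that strictifying a lifting square and then untwisting a strict lift reproduces a genuine lift of the original curved square, which ultimately reduces to the identities $(id,\xi)\circ(id,-\xi)=(id,0)$ and the composition rule $(f,\alpha)\circ(g,\beta)=(f\circ g,\alpha+f(\beta))$ of Definition \ref{def_morphism}.
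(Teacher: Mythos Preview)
Your approach differs from the paper's, and there is a genuine error in your weak-equivalence argument, along with some conflation of this proposition with Proposition~\ref{prop_preserved_classes_in_G}.

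The paper's proof of the cofibration case hinges on a single observation: since the rectification by $x_k$ equals $(id_\mathfrak{h},y_{j_k})\circ(f,\alpha)\circ(id_\mathfrak{g},-x_k)$, with both twist maps isomorphisms in $\curvedLie_*$, the rectification and the original are retracts of one another in the arrow category of $\curvedLie_*$. All three classes are closed under retracts (stated immediately before the proposition), so the cofibration case follows in one line. Your lifting-square transport via Proposition~\ref{prop_rect_square} is a valid alternative---and is in fact closer to what the paper does for the subsequent Proposition~\ref{prop_preserved_classes_in_G}---but here the retract argument is considerably shorter and avoids the bookkeeping you flag as the main obstacle.

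Your weak-equivalence argument contains a mistake. You assert that the twists $(id,x_l-x_k)$ and $(id,y_{j_l}-y_{j_k})$ are ``curved isomorphisms in $\mathcal{G}$'' and then apply two-out-of-three in $\mathcal{G}$. But $\mathcal{G}$ has only \emph{strict} morphisms, so a nonzero twist is not a morphism of $\mathcal{G}$ at all, and two-out-of-three in $\mathcal{G}$ cannot be invoked with these maps. Moreover, the forward direction is not literally Definition~\ref{def_classes}: that definition says the rectifications are weak equivalences in $\mathcal{G}$, whereas the present proposition asserts they are weak equivalences in $\curvedLie_*$ (this is precisely what distinguishes it from Proposition~\ref{prop_preserved_classes_in_G}). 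The direct check the paper has in mind is simpler and avoids your detour: rectifying the rectification-by-$x_k$ at its marked point $x_l-x_k$ yields exactly the rectification of the original by $x_l$, so the original and its rectification-by-$x_k$ share the \emph{same} family of rectifications in $\mathcal{G}$ and the same induced map on marked points; hence one is a weak equivalence in $\curvedLie_*$ if and only if the other is.
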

\begin{proof}
It is straightforward to show the statement holds for weak equivalences and fibrations. Any cofibration has the LLP with respect to any acyclic fibration, and since a rectification can be viewed as a retract of the original morphism it also has the LLP with respect to any acyclic fibration.
\end{proof}

\begin{proposition}\label{prop_preserved_classes_in_G}
Excluding those containing the initial object, a morphism of $\curvedLie_*$ belongs to a class in Definition \ref{def_classes} if, and only if, the rectification by each marked point of the domain considered as a morphism of $\mathcal{G}$ belongs to the same class in Definition \ref{def_classes_strict}.
\end{proposition}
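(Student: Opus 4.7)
The plan is to combine Proposition \ref{prop_preserved_classes}, which already reduces membership in each class of Definition \ref{def_classes} to the behaviour of each rectification viewed as a morphism of $\curvedLie_*$, with a direct comparison of Definitions \ref{def_classes} and \ref{def_classes_strict} for strict morphisms. Every rectification is strict by construction, and forgetting its marked points recovers exactly a morphism of $\mathcal{G}$; so the proof will reduce to showing that a strict morphism of $\curvedLie_*$ lies in a given class of Definition \ref{def_classes} if and only if its underlying morphism in $\mathcal{G}$ lies in the corresponding class of Definition \ref{def_classes_strict}.

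I would dispose of the fibration and weak equivalence cases first, as they are essentially tautological. Fibrations are defined in both categories by surjectivity of the underlying graded Lie algebra morphism, a condition unaffected by twisting or by forgetting marked points, and hence the two notions coincide. For weak equivalences, Definition \ref{def_classes} already stipulates that the rectification by each marked point be a weak equivalence in $\mathcal{G}$; for a morphism that is already strict the rectification coincides with the morphism itself up to the relabelling of marked points induced by twisting, and the bijection-of-marked-points clause is automatic for a strict morphism whose underlying map is a quasi-isomorphism into a target with a matching marking set.

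The cofibration case will be the main obstacle and calls for a bidirectional transfer of the left lifting property between $\curvedLie_*$ and $\mathcal{G}$. To deduce the LLP in $\mathcal{G}$ from the LLP in $\curvedLie_*$, I would take any lifting diagram in $\mathcal{G}$ against an acyclic fibration $p$ and enrich each object with a compatible choice of marked points (matching those of the cofibration candidate at its domain and codomain, and transporting them through $p$ at the target); the fibration and weak equivalence cases already established show this produces an acyclic fibration of $\curvedLie_*$ with the same underlying map, so a lift exists in $\curvedLie_*$ and solves the original problem once marks are forgotten. For the converse, a lifting square in $\curvedLie_*$ against an acyclic fibration is, by Proposition \ref{prop_rect_square}, isomorphic to a square of strict morphisms; this strict square can be viewed inside $\mathcal{G}$, where a lift is available by hypothesis, and the lift is then transported back through the rectification isomorphism to produce a lift in $\curvedLie_*$. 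Throughout, the observation recorded immediately after Definition \ref{def_classes} that the three classes are stable under rectification, together with the closure of each class under retracts, ensures that these translations preserve class membership in both directions.
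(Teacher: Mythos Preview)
Your approach is essentially the paper's: both dispatch weak equivalences and fibrations as immediate from the definitions, and for cofibrations both transfer the lifting property by regarding an acyclic fibration of $\mathcal{G}$ as a strict morphism in $\curvedLie_*$ with singleton marking $\{0\}$---your ``enrich with compatible marked points'' is exactly this move, and the paper's ``the converse statement is almost analogous'' is your rectify-the-square-and-lift via Proposition~\ref{prop_rect_square}. One small caveat: your assertion that the bijection-of-marked-points clause is ``automatic'' for a strict quasi-isomorphism is not correct as stated (nothing about the underlying map constrains the indexing sets), but the paper is equally terse at this point, calling the weak-equivalence case ``vacuously true'' without further comment, so you are not omitting anything the paper actually supplies.
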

\begin{proof}
This is vacuously true for weak equivalences and almost as easily seen for fibrations. Taking the rectification of a cofibration and viewing it as a morphism of $\mathcal{G}$, it must have the LLP with respect to all acyclic fibrations of $\mathcal{G}$ since it has the LLP with respect to all acyclic fibrations of $\curvedLie_*$, and in particular those strict morphisms where the sets of marked points are assumed to be the singleton set $\lbrace 0\rbrace$, i.e.\ where the morphism is already of the form of one in $\mathcal{G}$. The converse statement is almost analogous.
\end{proof}

\begin{proposition}
Given two composable weak equivalences, $(f,\alpha)$ and $(g,\beta )$, if any two of $(f,\alpha)$, $(g,\beta )$ and $(g\circ f, \beta + g(\alpha ))$ are weak equivalences then so is the third.
\end{proposition}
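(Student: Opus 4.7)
The plan is to reduce the two-out-of-three axiom in $\curvedLie_*$ to the corresponding axiom in $\mathcal{G}$, which is available since $\mathcal{G}$ carries a CMC structure by \cite{chuang_laz_mannan}. A weak equivalence in $\curvedLie_*$ is defined by two independent pieces of data — a bijection between the marked-point indexing sets, and a collection of strict weak equivalences in $\mathcal{G}$ (one for each rectification) — and I would dispatch these separately.

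Write $(f,\alpha)\colon(\mathfrak{g},\{x_i\}_{i\in I})\to(\mathfrak{h},\{y_j\}_{j\in J})$ and $(g,\beta)\colon(\mathfrak{h},\{y_j\}_{j\in J})\to(\mathfrak{a},\{z_k\}_{k\in K})$, so the composite is $(h,\gamma):=(g\circ f,\beta+g(\alpha))$. The induced set maps $I\to J$, $J\to K$ and $I\to K$ form a commutative triangle, and the clause about bijections of marked points reduces to the elementary set-theoretic fact that whenever two sides of such a triangle are bijections, so is the third.

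The decisive step is showing that rectification is compatible with composition. Fix $x_i\in\{x_i\}_{i\in I}$ and set $y:=(f,\alpha)(x_i)=f(x_i)-\alpha$ and $z:=(g,\beta)(y)=g(y)-\beta$. By Proposition \ref{prop_rect}, the rectification of $(f,\alpha)$ by $x_i$ is the strict morphism $(f,0)\colon\mathfrak{g}^{x_i}\to\mathfrak{h}^{y}$ and the rectification of $(g,\beta)$ by $y$ is $(g,0)\colon\mathfrak{h}^{y}\to\mathfrak{a}^{z}$. A short calculation using
\[
(h,\gamma)(x_i)=g(f(x_i))-\beta-g(\alpha)=g(y)-\beta=z
\]
identifies the rectification of $(h,\gamma)$ by $x_i$ with the composite $(g,0)\circ(f,0)=(g\circ f,0)\colon\mathfrak{g}^{x_i}\to\mathfrak{a}^{z}$ in $\mathcal{G}$.

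Given this compatibility, each of the three cases of two-out-of-three is handled uniformly. Pick a base marked point in $I$ (or, in the case where $(g,\beta)$ is the morphism to be concluded a weak equivalence, pick $y_j\in J$ and pull it back along the already-established bijection $I\cong J$); the three rectifications then form a composable triangle in $\mathcal{G}$ in which two sides are assumed to be weak equivalences of $\mathcal{G}$, so the third is one as well by two-out-of-three in $\mathcal{G}$. Combined with the set-theoretic clause above and Proposition \ref{prop_preserved_classes_in_G}, this gives the desired weak equivalence in $\curvedLie_*$. The main obstacle is the compatibility of rectification with composition displayed above; everything else is bookkeeping, and the edge case of the initial object is trivial since the only weak equivalence touching it is its identity.
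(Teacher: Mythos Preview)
Your proof is correct and follows essentially the same approach as the paper: handle the initial object trivially, reduce the marked-point condition to two-out-of-three for bijections of sets, and reduce the rectification condition to two-out-of-three in the CMC $\mathcal{G}$. You supply more detail than the paper does---in particular the explicit verification that rectification is compatible with composition---but the argument is the same.
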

\begin{proof}
When all objects involved are the initial object, the result is clear. Now excluding the initial object: if any two of the morphisms induce bijections upon the sets of marked points then so must the third. Further, if any two of them are weak equivalences of $\mathcal{G}$ after rectification then again so must the third since the two of three property holds for $\mathcal{G}$.
\end{proof}

By definition, any cofibration has the LLP with respect to all acyclic fibrations, accordingly it remains to show that any acyclic cofibration has the LLP with respect to all fibrations.

\begin{proposition}\label{prop_acy_cofib_lift_wrt_fibs}
The acyclic cofibrations are precisely the morphisms that have the LLP with respect to the fibrations.
\end{proposition}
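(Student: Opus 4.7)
The plan is to prove both implications of the equality, reducing throughout to the analogous statement for the category $\mathcal{G}$ established in \cite{chuang_laz_mannan}.

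For the direction that an acyclic cofibration $(f,\alpha)$ has the LLP with respect to every fibration, I would consider an arbitrary lifting problem against a fibration $(p,\gamma)$. The case involving the initial object of $\curvedLie_*$ is immediate from Definition \ref{def_classes}. Otherwise I fix a marked point $x_k$ in the top-left vertex and apply Proposition \ref{prop_rect_square} to rectify the entire commutative square by $x_k$, obtaining a commutative square of strict morphisms. By the remark following Definition \ref{def_classes} the rectified left arrow is still an acyclic cofibration and the rectified right arrow is still a fibration, and by Proposition \ref{prop_preserved_classes_in_G} they are respectively an acyclic cofibration and a fibration of $\mathcal{G}$. The corresponding result in $\mathcal{G}$ from \cite{chuang_laz_mannan} then provides a strict lift, and composing this lift on either side with the inverse twisting isomorphisms (those of the form $(id,\cdot)$ used to define the rectification) produces the desired lift in the original curved square.

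For the converse, assume $(f,\alpha)$ has the LLP with respect to every fibration. Since every acyclic fibration is a fibration, $(f,\alpha)$ is automatically a cofibration by Definition \ref{def_classes}, so it remains to show $(f,\alpha)$ is a weak equivalence. Factor it as $(f,\alpha) = p \circ i$ with $i$ an acyclic cofibration and $p$ a fibration (the relevant factorisation axiom is provided by a small object argument, as used elsewhere in the paper). The LLP applied to the resulting square yields a lift exhibiting $(f,\alpha)$ as a retract of $i$, and since weak equivalences are closed under retracts (see the remark after Definition \ref{def_classes}), $(f,\alpha)$ is a weak equivalence.

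The main obstacle will be the first direction: rectification is not functorial, so care is required to check both that the rectified square is coherent (which is exactly the content of Proposition \ref{prop_rect_square}) and that un-rectifying the strict lift really produces a lift of the \emph{original} curved square rather than merely the rectified one. This reduces to bookkeeping with the composition rule $(f,\alpha)\circ(g,\beta) = (f\circ g,\alpha+f(\beta))$ for curved morphisms from Definition \ref{def_morphism}, together with the fact that the twisting isomorphisms $(id,\pm\xi)$ are genuine inverses in $\curvedLie_*$.
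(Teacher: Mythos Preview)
Your forward direction is essentially the paper's argument: rectify the lifting square via Proposition~\ref{prop_rect_square}, invoke Propositions~\ref{prop_preserved_classes} and~\ref{prop_preserved_classes_in_G} to land in $\mathcal{G}$, lift there, and untwist. The paper's proof is terser but identical in substance.

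Your converse, however, takes a genuinely different route. The paper handles the converse by the \emph{same} reduction to $\mathcal{G}$: a morphism with the LLP against all fibrations rectifies (at each marked point) to a strict morphism which, viewed in $\mathcal{G}$, still has the LLP against fibrations and hence is an acyclic cofibration there since $\mathcal{G}$ is already a CMC; Propositions~\ref{prop_preserved_classes} and~\ref{prop_preserved_classes_in_G} then pull this conclusion back to $\curvedLie_*$. You instead appeal to the factorisation into an acyclic cofibration followed by a fibration and run the standard retract argument. That argument is correct in principle, but in the paper's logical order the factorisation is proven \emph{after} the present proposition and its proof uses it (the pushout of the map into the disk is declared an acyclic cofibration, which relies on the LLP characterisation you are establishing). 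So as written your converse is circular relative to the paper's development. It can be repaired---the generating map into the disk can be checked directly to be both a weak equivalence (both sides have non-zero curvature after twisting, and the marked points match) and to have the LLP against fibrations, and these properties pass to pushouts---but that is more work than ``a small object argument, as used elsewhere in the paper'' suggests, and the paper does not in fact use a small object argument here. The paper's uniform rectification-to-$\mathcal{G}$ approach avoids this dependency entirely.
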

\begin{proof}
For morphisms not containing the initial object, this follows from Proposition \ref{prop_preserved_classes}, Proposition \ref{prop_preserved_classes_in_G}, and that the statement holds in the CMC $\mathcal{G}$. The only remaining case is the acyclic cofibration given by the initial object uniquely mapping to itself, where the statement is clear.
\end{proof}

In order to show a morphism of $\curvedLie_*$ can always be factorised as the composition of an acyclic cofibration followed by a fibration it is necessary to introduce the disk of a marked curved Lie algebra.

\begin{definition}
The disk of the marked curved Lie algebra $((\mathfrak{g},d_\mathfrak{g},\omega_\mathfrak{g} ),\lbrace x_i \rbrace_{i\in I})$ is the marked curved Lie algebra $((D_\mathfrak{g},\bar{d},\bar{\omega}),\lbrace u_{x_i}\rbrace_{i\in I} )$ where $D_\mathfrak{g}=\operatorname{L}\langle\bar{\omega}, u_g, v_g : g\in\mathfrak{g}_{-} \rangle $ subject to the relations $\bar{d}u_g =v_g$ and $\bar{d}v_g=[\bar{\omega} ,u_g ]$, where $\mathfrak{g}_{-}$ denotes the homogeneous elements of $\mathfrak{g}$ and $|u_g|=|g|$ for all $g\in\mathfrak{g}_{-}$. In particular, $\bar{\omega}$ is the curvature.
\end{definition}

This can informally be thought of as attaching cells to the curved Lie algebra to make it acyclic, despite curved Lie algebras not necessarily being complexes. Even in the case of a marked curved Lie algebra with zero curvature (i.e.\ a dgla with a set of marked points) the disk construction does not lead to an acyclic complex since a non-zero curvature element always exists as a generator of the disk.

\begin{remark}
The canonical strict morphism $(D_\mathfrak{g}, \lbrace u_{x_i} \rbrace_{i\in I}) \to (\mathfrak{g},\lbrace x_i \rbrace_{i\in I} )$ given by $u_g \mapsto g$, $v_g \mapsto d_\mathfrak{g} g$, and $\bar{\omega} \mapsto \omega_\mathfrak{g}$ for all $g\in\mathfrak{g}_{-}$ is a fibration.
\end{remark}

\begin{proposition}
An acyclic cofibration is given by the canonical strict morphism 
\[
\left(\operatorname{L}\langle \bar{\omega}, u_{g_i},v_{g_i} \rangle,\lbrace u_{g_i} \rbrace_{i\in I} \right)\to (D_\mathfrak{g},\lbrace g_i \rbrace_{i\in I} ).
\]
\end{proposition}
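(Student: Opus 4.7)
The plan is to verify the two conditions making up an acyclic cofibration: that the canonical inclusion $\iota$ is a weak equivalence in $\curvedLie_*$, and that it enjoys the LLP against every fibration (which, by Proposition \ref{prop_acy_cofib_lift_wrt_fibs}, in fact subsumes both the cofibration and the acyclicity portions). Since $\iota$ is strict and the underlying Lie algebras on both sides are free, both arguments should reduce quickly to observations about the generators. The pivotal fact that will power both halves is that $\bar{\omega}$ is a free generator independent of the $u_{g_i}$ and $v_{g_i}$, so it cannot be eliminated by any twist or bracket.

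For the weak equivalence, the induced map on marked points is the identity on $\{u_{g_i}\}_{i\in I}$, hence a bijection. By Proposition \ref{prop_preserved_classes_in_G} it suffices to check that rectification by a chosen marked point $u_{g_k}$ yields a weak equivalence in $\mathcal{G}$. Because $\iota$ is already strict, the rectification (in the sense of Proposition \ref{prop_rect}) simply twists both source and target by $u_{g_k}$, leaving the underlying graded Lie algebra morphism unchanged; the new curvature on either side is
\[
\bar{\omega} + \bar{d}u_{g_k} + \tfrac{1}{2}[u_{g_k},u_{g_k}] = \bar{\omega} + v_{g_k} + \tfrac{1}{2}[u_{g_k},u_{g_k}],
\]
which is nonzero in the respective free Lie algebras because $\bar{\omega}$ is a free generator. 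By Definition \ref{def_classes_strict}, any strict morphism between curved Lie algebras with nonzero curvature is a weak equivalence in $\mathcal{G}$, so we are done with this half.

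For the lifting property, consider a commutative square with $\iota$ on the left and a fibration $(p,\gamma)\colon(\mathfrak{a},\{z_k\})\to(\mathfrak{b},\{w_l\})$ on the right. By Proposition \ref{prop_rect_square} it is enough to solve the square after rectification, so assume all four morphisms are strict; write $\phi\colon\operatorname{L}\langle\bar{\omega},u_{g_i},v_{g_i}\rangle\to\mathfrak{a}$ and $\psi\colon D_\mathfrak{g}\to\mathfrak{b}$ with $p\circ\phi = \psi\circ\iota$. Because $D_\mathfrak{g}$ is a free graded Lie algebra on $\bar{\omega}$ and the pairs $u_g,v_g$ for $g\in\mathfrak{g}_-$, a strict lift $\tilde{\psi}\colon D_\mathfrak{g}\to\mathfrak{a}$ is determined by its values on these generators. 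On $\bar{\omega}$ and on each $u_{g_i},v_{g_i}$ set $\tilde{\psi}$ equal to $\phi$; on each remaining generator $u_g$ use surjectivity of $p$ to choose a degree-$|g|$ preimage $\tilde{u}_g\in\mathfrak{a}$ of $\psi(u_g)$, and set $\tilde{\psi}(u_g):=\tilde{u}_g$ and $\tilde{\psi}(v_g):=d_\mathfrak{a}(\tilde{u}_g)$. The relation $\bar{d}v_g = [\bar{\omega},u_g]$ is preserved because in the curved Lie algebra $\mathfrak{a}$ one has $d_\mathfrak{a}^2 = [\omega_\mathfrak{a},-]$, and marked points are preserved because the only marked points on the source live in the subalgebra, where $\tilde{\psi}$ agrees with $\phi$. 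The main point requiring care is the second step: confirming that the choices for $u_g$ can be made without disturbing the already-fixed images of $\bar{\omega}, u_{g_i}, v_{g_i}$ and the differential constraints — this works cleanly precisely because $D_\mathfrak{g}$ is defined as a free object on generators satisfying only these local relations.
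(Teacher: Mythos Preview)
Your argument is correct and the lifting half is essentially the paper's own proof: reduce to a strict square via Proposition~\ref{prop_rect_square}, copy $\phi$ on the generators $\bar{\omega},u_{g_i},v_{g_i}$, and use surjectivity of the fibration to choose preimages for the remaining $u_g$. You are in fact more careful than the paper here, since you also specify $\tilde{\psi}(v_g):=d_\mathfrak{a}(\tilde{u}_g)$ and verify the relation $\bar{d}v_g=[\bar{\omega},u_g]$ is preserved, whereas the paper leaves this implicit.

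The only difference is that the paper proves \emph{only} the LLP against fibrations and then implicitly invokes Proposition~\ref{prop_acy_cofib_lift_wrt_fibs} to conclude the map is an acyclic cofibration; your separate weak-equivalence check (nonzero curvature after twisting, via the free generator $\bar{\omega}$) is correct but redundant, as you yourself note. So the two proofs agree in substance, with yours being somewhat more thorough.
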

\begin{proof}
When $(\mathfrak{g},\lbrace x_i \rbrace_{i\in I})$ is the initial object the result is trivial. Let
\[
(g,0): (\mathfrak{x},\lbrace x_j \rbrace_{j\in J}) \to (\mathfrak{y},\lbrace y_k \rbrace_{k\in K})
\]
be a fibration and
\begin{center}
\begin{tikzpicture}
\matrix (m) [matrix of math nodes, row sep=2em, column sep=2em]{
(\operatorname{L}\langle\bar{\omega}, u_{g_i}, v_{g_i} : i\in I \rangle,\lbrace u_{g_i} \rbrace_{i\in I}) & (\mathfrak{x},\lbrace x_j \rbrace_{j\in J}) \\
(D_\mathfrak{g},\lbrace u_{g_i} \rbrace_{i\in I}) & (\mathfrak{y},\lbrace y_k \rbrace_{k\in K}) \\};
\path[->]
(m-1-1) edge node[above] {$(f,0)$} (m-1-2)
(m-1-1) edge (m-2-1)
(m-2-1) edge (m-2-2)
(m-1-2) edge node[right] {$(g,0)$} (m-2-2);
\end{tikzpicture}
\end{center}
be a commutative diagram. The diagram has been assumed to be strict by Proposition \ref{prop_rect_square}. A lift $(h,0 ):(D_\mathfrak{g},\lbrace g_i \rbrace_{i\in I})\to(\mathfrak{x},\lbrace x_j \rbrace_{j\in J})$ is defined as follows. For all  $\bar{\omega}, u_{g_i},v_{g_i}\in D_\mathfrak{g}$, $(h,0)$ has the same action as $(f,0)$. For all $u_g\in D_\mathfrak{g}$ that are not in $\left(\operatorname{L}\langle \bar{\omega}, u_{g_i},v_{g_i} \rangle,\lbrace u_{g_i} \rbrace_{i\in I} \right)$ there exists some $y \in\mathfrak{y}$ such that $u_g\mapsto y$ and since $(g,0 )$ is surjective one can choose $x\in\mathfrak{x}$ such that $x\mapsto y$. Thus letting $h(u_g)=x$ completely defines a lift.
\end{proof}

\begin{proposition}
Given a morphism $(\mathfrak{g},\lbrace x_i \rbrace_{i\in I})\to(\mathfrak{h},\lbrace y_j \rbrace_{j\in J})$ of $\curvedLie_*$ it can be factorised as the composition of an acyclic cofibration followed by a fibration.
\end{proposition}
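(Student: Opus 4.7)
My plan is to exhibit the factorization explicitly by gluing the target's disk $D_\mathfrak{h}$ onto the source. Given $(f,\alpha)\colon(\mathfrak{g},\{x_i\}_{i\in I})\to(\mathfrak{h},\{y_j\}_{j\in J})$, let $\mathfrak{p}$ denote the curved Lie algebra underlying the coproduct $(\mathfrak{g},\{x_i\}_{i\in I})\coprod(D_\mathfrak{h},\{u_{y_j}\}_{j\in J})$ of Proposition \ref{prop_coprod}, but equipped with only the marked points $\{x_i\}_{i\in I}$. The universal property---combining $(f,\alpha)$ on the first factor with the canonical surjection $(D_\mathfrak{h},\{u_{y_j}\})\to(\mathfrak{h},\{y_j\})$ on the second---produces a morphism $(q,\alpha)\colon(\mathfrak{p},\{x_i\}_{i\in I})\to(\mathfrak{h},\{y_j\}_{j\in J})$, and composition with the strict inclusion $(\iota,0)\colon(\mathfrak{g},\{x_i\}_{i\in I})\hookrightarrow(\mathfrak{p},\{x_i\}_{i\in I})$ recovers $(f,\alpha)$.

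The map $(q,\alpha)$ is a fibration because its underlying graded Lie algebra morphism is surjective: every $h\in\mathfrak{h}_{-}$ is hit via the canonical disk morphism $u_h\mapsto h$. The harder step is verifying that $(\iota,0)$ is an acyclic cofibration. By construction it is bijective on marked points, so Propositions \ref{prop_preserved_classes} and \ref{prop_preserved_classes_in_G} reduce the question to showing that the rectification $\mathfrak{g}^{x_k}\hookrightarrow\mathfrak{p}^{x_k}$ at each marked point $x_k$ is an acyclic cofibration in $\mathcal{G}$. This rectification adjoins to $\mathfrak{g}^{x_k}$ the generators $\bar\omega$, $u_h$, $v_h$ (for each $h\in\mathfrak{h}_{-}$), together with the formal element $z$, equipped with the differentials prescribed in Proposition \ref{prop_coprod}. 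Iterating the preceding proposition---which says that attaching a single contractible disk pair is an acyclic cofibration in $\mathcal{G}$---across all disk generators of $D_\mathfrak{h}$, and invoking closure of acyclic cofibrations under (transfinite) composition, handles the disk part.

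The principal obstacle is the correct treatment of the extra generator $z$, which is not part of a contractible pair and whose differential $dz=\bar\omega-\omega_\mathfrak{g}-\frac{1}{2}[z,z]$ encodes the curvature reconciliation between the two factors of the coproduct. I expect $z$ to be handled either by a Maurer--Cartan style twist that, after $\bar\omega$ is already present, makes its attachment acyclic, or alternatively by a direct lifting argument against an arbitrary acyclic fibration in $\mathcal{G}$ that exploits the freeness of $z$ as a generator. Once the acyclic cofibration status of this final attachment is established, composing all the intermediate acyclic cofibrations yields $(\iota,0)$ and completes the factorization.
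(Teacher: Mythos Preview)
Your construction differs from the paper's in a way that creates real work you have not finished. The paper does \emph{not} take the free coproduct $\mathfrak{g}\coprod D_\mathfrak{h}$; it takes the pushout
\[
D_\mathfrak{h}\coprod_{\operatorname{L}\langle\bar\omega,\,u_{y_j},\,v_{y_j}:\,j\in J\rangle}\mathfrak{g},
\]
so that the ``small'' subalgebra $\operatorname{L}\langle\bar\omega,u_{y_j},v_{y_j}\rangle$ of the disk is glued into $\mathfrak{g}$. The point is that the preceding proposition exhibits $\operatorname{L}\langle\bar\omega,u_{y_j},v_{y_j}\rangle\to D_\mathfrak{h}$ as an acyclic cofibration, and Proposition~\ref{prop_acy_cofib_lift_wrt_fibs} has already characterised acyclic cofibrations as the morphisms with the LLP against all fibrations; closure of such a lifting class under pushouts is a formal categorical fact. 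So the paper gets ``$(\iota,0)$ is an acyclic cofibration'' for free, with no rectification analysis and no generator $z$ to worry about.

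Your route, by contrast, requires a direct argument that $\mathfrak{g}^{x_k}\hookrightarrow\mathfrak{p}^{x_k}$ is an acyclic cofibration in $\mathcal{G}$, and the justification you give does not go through as written. First, the preceding proposition does \emph{not} say that ``attaching a single contractible disk pair is an acyclic cofibration in $\mathcal{G}$''; it says that the inclusion of the marked-point subalgebra $\operatorname{L}\langle\bar\omega,u_{g_i},v_{g_i}\rangle$ into the whole disk $D_\mathfrak{g}$ is an acyclic cofibration in $\curvedLie_*$. That statement is about attaching all the remaining (unmarked) generators at once to an object that already carries the curvature generator $\bar\omega$; it does not iterate in the way you describe. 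Second, you yourself flag the generator $z$ as the ``principal obstacle'' and then only sketch two possible strategies (``I expect\dots''). Neither is carried out, and the curvature-reconciling differential $dz=\bar\omega-\omega_\mathfrak{g}-\tfrac12[z,z]$ is exactly the piece that prevents the extension from being an obvious sequence of acyclic cell attachments. Until that step is actually executed, the proof is incomplete. Recasting your coproduct as the paper's pushout eliminates the problem entirely.
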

\begin{proof}
Consider the pushout
\begin{center}
\begin{tikzpicture}
\matrix (m) [matrix of math nodes, row sep=2em, column sep=2em]{
(\operatorname{L}\langle\bar{\omega}, u_{y_j}, v_{y_j} : j\in J \rangle,\lbrace g_j \rbrace_{j\in J}) & (\mathfrak{g},\lbrace x_i \rbrace_{i\in I}) \\
(D_\mathfrak{h}, \lbrace y_j \rbrace_{j\in J} ) & \left(D_\mathfrak{h}\coprod_{\operatorname{L}\langle\bar{\omega}, u_{y_j}, v_{y_j} : j\in J\rangle} \mathfrak{g},\lbrace z_i \rbrace_{i\in I}\right). \\};
\path[->]
(m-1-1) edge (m-1-2)
(m-1-1) edge (m-2-1)
(m-2-1) edge (m-2-2)
(m-1-2) edge (m-2-2);
\end{tikzpicture}
\end{center}
Since it is the pushout of an acyclic cofibration the right hand morphism is also an acyclic cofibration. Furthermore, the universal property of a pushout provides a morphism $\left(D_\mathfrak{h}\coprod_{\operatorname{L}\langle\bar{\omega}, u_{y_j}, v_{y_j} : j\in J\rangle} \mathfrak{g},\lbrace z_i \rbrace_{i\in I}\right)\to (\mathfrak{h},\lbrace y_j \rbrace_{j\in J})$ that is surjective and such that the composition
\[
(\mathfrak{g},\lbrace x_i \rbrace_{i\in })\to\left(D_\mathfrak{h}\coprod\nolimits_{\operatorname{L}\langle\bar{\omega}, u_{y_j}, v_{y_j} : j\in J\rangle} \mathfrak{g},\lbrace z_i \rbrace_{i\in I}\right)\to(\mathfrak{h},\lbrace y_j \rbrace_{j\in J})
\]
is equal to the original morphism.
\end{proof}

\begin{proposition}
Any morphism in $\curvedLie_*$ can be factorised into a cofibration followed by an acyclic fibration.
\end{proposition}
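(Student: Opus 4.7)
The plan is to lift the corresponding factorization already available in $\mathcal{G}$ (from the CMC structure of \cite{chuang_laz_mannan}) to a factorization in $\curvedLie_*$, by marking the intermediate object appropriately.

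First I would dispose of the trivial case of a morphism involving the initial object, as described after Definition \ref{def_classes}. For a general morphism $(f,\alpha)\colon(\mathfrak{g},\lbrace x_i\rbrace_{i\in I})\to(\mathfrak{h},\lbrace y_j\rbrace_{j\in J})$ not involving the initial object, Proposition \ref{prop_rect_square} reduces the question to a strict morphism $(f,0)$. Forgetting the marks, the CMC structure on $\mathcal{G}$ factors $f$ as $\mathfrak{g}\xrightarrow{\iota}\mathfrak{z}\xrightarrow{p}\mathfrak{h}$, where $\iota$ is a cofibration and $p$ is an acyclic fibration (in particular $p$ is surjective on the underlying Lie algebras).

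The heart of the argument is to equip $\mathfrak{z}$ with a set of marks $\lbrace z_j\rbrace_{j\in J}$, indexed by $J$, as follows: for each $j\in J$ pick $z_j\in p^{-1}(y_j)$ (possible since $p$ is surjective), with the constraint that whenever $y_j=f(x_i)$ for some $i\in I$ we take $z_j=\iota(x_i)$ (consistent, as $p\circ\iota=f$). With this marking, $\iota$ sends each $x_i$ to some $z_{j}\in\lbrace z_j\rbrace_{j\in J}$ and so is a morphism of marked curved Lie algebras, while $p$ sends $z_j\mapsto y_j$ bijectively, thereby giving the required candidate morphisms in $\curvedLie_*$.

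The verification of the classes in $\curvedLie_*$ then follows from Proposition \ref{prop_preserved_classes_in_G}: the rectification of $\iota$ (respectively $p$) by any mark is, up to twisting, the original cofibration $\iota$ (respectively the original acyclic fibration $p$) viewed in $\mathcal{G}$, and so lies in the same class. The principal obstacle is the self-consistency of the mark assignment on $\mathfrak{z}$ when the induced map $\phi\colon I\to J$ on marks is not injective: two marks $x_{i_1}\neq x_{i_2}$ of $\mathfrak{g}$ with $f(x_{i_1})=f(x_{i_2})$ force $\iota(x_{i_1})=\iota(x_{i_2})$, which is not automatic for an injective $\iota$. This can be circumvented by prepending to $\iota$ a cofibration that quotients $\mathfrak{g}$ by the relations $x_{i_1}-x_{i_2}$ for each such pair (equivalently, choosing the $\mathcal{G}$-factorization of $f$ so that $\iota$ factors through the quotient of $\mathfrak{g}$ identifying marks with the same $f$-image), after which the construction described above goes through verbatim.
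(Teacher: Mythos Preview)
Your overall strategy---rectify to a strict morphism, factor in $\mathcal{G}$, then mark the intermediate object by lifting the marks of $\mathfrak{h}$ through $p$---is exactly the paper's approach, and you correctly isolate the delicate point: when distinct marks $x_{i_1}\neq x_{i_2}$ of $\mathfrak{g}$ have the same image in $\mathfrak{h}$ but $\iota(x_{i_1})\neq\iota(x_{i_2})$, the marking on $\mathfrak{z}$ indexed by $J$ cannot accommodate both, so that either $\iota$ fails to be a marked morphism or $p$ fails to induce a bijection of marks. The paper's own proof is terse on precisely this point.

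Your proposed remedy, however, does not work. A ``cofibration that quotients $\mathfrak{g}$ by the relations $x_{i_1}-x_{i_2}$'' does not exist in general: such a map is a surjection with nonzero kernel, hence a fibration in $\mathcal{G}$, and in the uncurved case cofibrations in $\mathcal{G}$ are necessarily injective. Indeed, embed $\mathfrak{g}$ into an acyclic dgla $C$ (for instance the cone on $\mathfrak{g}$); the map $C\to 0$ is an acyclic fibration, and solving the lifting problem with the inclusion $\mathfrak{g}\hookrightarrow C$ on top and $0$ on the bottom against a putative cofibration $c\colon\mathfrak{g}\to\mathfrak{g}'$ forces $\ker c=0$. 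The same objection kills your parenthetical reformulation: if the $\mathcal{G}$-cofibration $\iota$ factored through the quotient identifying $x_{i_1}$ with $x_{i_2}$, then $\iota$ itself would have nonzero kernel and hence would not be a cofibration in $\mathcal{G}$, so Proposition~\ref{prop_preserved_classes_in_G} could no longer be invoked. Thus the obstacle you raise is genuine, but your fix does not close it; a correct treatment needs to modify the intermediate object $\mathfrak{z}$ (or its marking) rather than collapse part of $\mathfrak{g}$.
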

\begin{proof}
In the case $(\mathfrak{g},\lbrace x_i \rbrace_{i\in I})$ is the initial object, take the factorisation
\[
(\mathfrak{g},\lbrace x_i \rbrace_{i\in I})\to(\mathfrak{h},\lbrace y_j \rbrace_{j\in J})\to(\mathfrak{h},\lbrace y_j \rbrace_{j\in J}),
\]
where the morphism $(\mathfrak{h},\lbrace y_j \rbrace_{j\in J})\to(\mathfrak{h},\lbrace y_j \rbrace_{j\in J})$ is the identity.

Assume, $(\mathfrak{g},\lbrace x_i \rbrace_{i\in I})$ is not the initial object. Let
\[
(f,\alpha )\colon (\mathfrak{g},\lbrace x_i \rbrace_{i\in I})\to (\mathfrak{h},\lbrace y_j \rbrace_{j\in J})
\]
be any morphism in $\curvedLie_*$, and for every $k\in I$ let $(f,\alpha)(x_k) = y_{j_k}$. Consider the strict morphisms
\[
(f,0)\colon(\mathfrak{g}^{x_k}, \lbrace x_i - x_k \rbrace_{i\in I})\to(\mathfrak{h}^{y_{j_k}},\lbrace y_j - y_{j_k} \rbrace_{j\in J})
\]
(obtained by rectification c.f.\ Definition \ref{def_rectification}) as morphisms in $\mathcal{G}$ by forgetting the marked points. Therefore, there exists the factorisation in $\mathcal{G}$
\begin{center}
\begin{tikzpicture}
\matrix (m) [matrix of math nodes, row sep=2em, column sep=2em]{
& \mathfrak{x} & \\
\mathfrak{g}^{x_k} & & \mathfrak{h}^{y_{j_k}}, \\
};
\path[->]
(m-2-1) edge node[below] {$(f,0)$} (m-2-3)
(m-2-1) edge node[left] {$(\iota ,0)$} (m-1-2)
(m-1-2) edge node[right] {$(p,0)$} (m-2-3);
\end{tikzpicture}
\end{center}
where $(\iota,0)$ and $(p,0)$ are a cofibration and acyclic fibration of $\mathcal{G}$ resp. Since $(f,0)=(p\circ\iota, 0)$, the image of $\lbrace x_i -x_k \rbrace_{i\in I}$ under $(\iota ,0)$ must be a subset of the pre-image $p^{-1}\lbrace y_j -y_{j_k}\rbrace_{j\in J}$. Accordingly, for each element $y_n \in\lbrace y_j -y_{j_k}\rbrace_{j\in J}$, take $z_n$ to be an element in the pre-image of $y_n$ under $(p,0)$ (which exists by surjectivity of $p$) that is either also in $(\iota ,0) \lbrace x_i -x_n \rbrace_{i\in I}$ if it exists, or any arbitrary element of correct degree if it does not. Thus, $(\mathfrak{x},\lbrace z_j \rbrace_{j\in J})$ is a marked curved Lie algebra. Untwisting $\mathfrak{g}$ and $\mathfrak{h}$ results in a decomposition of the morphism $(f,\alpha )= (p,-y_{j_k})\circ (\iota,\iota (x_k))$. Clearly $(p,-y_{j_k})$ is an acyclic fibration. To see that $(\iota,\iota (x_k))$ is a cofibration, consult the following diagram
\begin{center}
\begin{tikzpicture}
\matrix (m) [matrix of math nodes, row sep=2em, column sep=2em]{
(\mathfrak{g},\lbrace x_i \rbrace_{i\in I}) & (\mathfrak{y},\lbrace a_m \rbrace_{m\in M}) \\
(\mathfrak{g}^{x_k},\lbrace x_i - x_k \rbrace_{i\in I}) & (\mathfrak{y},\lbrace a_m \rbrace_{m\in M}) \\
(\mathfrak{x},\lbrace z_j \rbrace_{j\in J}) & (\mathfrak{y}',\lbrace b_m \rbrace_{m\in M}) ,\\
};
\path[->]
(m-1-1) edge (m-1-2)
(m-1-1) edge (m-2-1)
(m-1-2) edge[-, double] (m-2-2)
(m-2-1) edge (m-2-2)
(m-2-1) edge node[left] {$(\iota , 0)$} (m-3-1)
(m-2-2) edge (m-3-2)
(m-3-1) edge (m-3-2);
\end{tikzpicture}
\end{center}
where the morphism $(\mathfrak{y},\lbrace a_m \rbrace_{m\in M})\to (\mathfrak{y}',\lbrace b_m \rbrace_{m\in M})$ is a strict acyclic fibration and hence is an acyclic fibration in $\mathcal{G}$. Therefore, a lift exists in the bottom square which clearly respects marked points by commutativity of the diagram and hence provides a lift for the outer rectangle implying that the morphism $(\iota,\iota (x_k))$ is a cofibration.
\end{proof}

\begin{proposition}\label{prop_curvedLie_CMC}
$\curvedLie_*$ is a CMC with the model structure of Definition \ref{def_classes}.
\end{proposition}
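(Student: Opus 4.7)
The plan is to verify directly the five CMC axioms (MC1 through MC5) of \cite{quillen}, since essentially all the substantive work has already been carried out in the preceding propositions of this subsection. The proof should therefore take the form of a short bookkeeping argument citing what is already in hand, together with a quick check on the two points that were not stated as separate propositions.

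First I would dispatch MC1: the category $\curvedLie_*$ has all small limits and colimits by the proposition at the end of Section \ref{sec_lim}. Next, MC2 (two-out-of-three) is exactly the content of the earlier proposition on composable weak equivalences. For MC3 (closure under retracts), I would appeal to the remark made immediately after Definition \ref{def_classes} that each of the three classes is manifestly closed under retracts; the only thing to note is that for cofibrations this follows from the standard fact that morphisms defined by the LLP are retract-closed, for fibrations from the fact that surjectivity is stable under retracts, and for weak equivalences from the corresponding stability in $\mathcal{G}$ combined with Proposition \ref{prop_preserved_classes_in_G}.

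For MC4, half is by definition (cofibrations have the LLP with respect to acyclic fibrations) and the other half is precisely Proposition \ref{prop_acy_cofib_lift_wrt_fibs}. For MC5, the two factorisation axioms are exactly the two preceding propositions: any morphism factors as an acyclic cofibration followed by a fibration, and as a cofibration followed by an acyclic fibration.

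The only subtle point the plan must flag is the role of the formally adjoined initial object: one should check that each axiom remains valid when one or more of the objects in question is this initial object. Since Definition \ref{def_classes} handles this by fiat (the unique self-map is in all three classes, and the unique morphism from the initial object to any object is declared a cofibration), and since the earlier propositions were all stated with the initial-object case explicitly addressed, this verification is routine and reduces to inspection in each axiom. I do not expect any genuine obstacle here: the hard work was done in proving Proposition \ref{prop_preserved_classes_in_G}, which transports statements about $\curvedLie_*$ to statements about $\mathcal{G}$, and in the two factorisation propositions; the present proposition is the final assembly.
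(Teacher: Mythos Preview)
Your proposal is correct and takes essentially the same approach as the paper: the paper's own proof is simply the one-line statement ``It follows by the preceding results,'' which is exactly the assembly of MC1--MC5 from the earlier propositions that you have spelled out in detail. Your version is just a more explicit unpacking of that sentence.
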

\begin{proof}
It follows by the preceding results.
\end{proof}

\section{Quillen equivalence}\label{sec_equiv}

This section first recalls the Quillen equivalence of \cite[Section~5]{chuang_laz_mannan} between $\mathcal{E}$ and $\mathcal{G}$. A nice feature of this construction is the contravariant functors interchange curved Lie algebras with non-zero curvature and acyclic algebras, as well as uncurved Lie algebras (or dglas) and Hinich algebras. Using the Quillen equivalence of $\mathcal{E}$ and $\mathcal{G}$ as a foundation, a Quillen equivalence between $\curvedLie_*$ and $\mathcal{A}$ (via $\Prod(\mathcal{E})$) is constructed.

As remarked earlier, when passing from $\mathcal{E}$ to $\mathcal{A}$ (via $\Prod(\mathcal{E})$) it is possible to extend the Quillen equivalence by passing from $\mathcal{G}$ to its category of formal coproducts: c.f.\ \cite{chuang_laz_mannan}. However, this result is unnatural due to its asymmetry. Here---by replacing the category of formal coproducts with the category of curved Lie algebras with curved morphisms---a more symmetric result is obtained.

\begin{definition}\label{def_CE}
A contravariant functor $\CE\colon\mathcal{G}\to\mathcal{E}$ is given by sending a curved Lie algebra $(\mathfrak{g},d,\omega )$ to the local pseudo-compact cdga $\hat{S}\Sigma^{-1} \mathfrak{g}^*$ with differential induced via the Leibniz rule and continuity by its restriction to $\Sigma^{-1} \mathfrak{g}^*$. The restriction is given by the sum of three components:
\begin{itemize}
\item $\Sigma^{-1} \mathfrak{g}^*\to k$ given by the evaluation of the curvature $\omega$ of $\mathfrak{g}$,
\item $\Sigma^{-1} \mathfrak{g}^*\to \Sigma^{-1} \mathfrak{g}^*$ given by pre-composition of the differential $d$, and
\item $\Sigma^{-1} \mathfrak{g}^*\to S^2 \Sigma^{-1} \mathfrak{g}^*$ given by the pre-composition of the bracket on $\mathfrak{g}$.
\end{itemize}
\end{definition}

\begin{remark}
If the curvature of a curved Lie algebra is $0$, then the first part of the differential in Definition \ref{def_CE} disappears recovering the construction of Hinich \cite{hinich_stacks}.
\end{remark}

\begin{proposition}
For any given curved Lie algebra $(\mathfrak{g},d,\omega)\in\mathcal{G}$, the local pseudo-compact cdga $\CE (\mathfrak{g},d,\omega)$ is cofibrant in $\mathcal{E}$.
\end{proposition}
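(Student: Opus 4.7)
The plan is to realise the unit map $k \to \CE(\mathfrak{g})$ as a cofibration by exploiting the freeness of $\CE(\mathfrak{g})=\hat{S}\Sigma^{-1}\mathfrak{g}^*$ as a pseudo-compact graded commutative algebra, and then matching the resulting semifree description to one of the templates for cofibrations in Definition~\ref{def_model_for_extended}. The argument naturally splits depending on whether the curvature vanishes, reflecting the Hinich/acyclic dichotomy inside $\mathcal{E}$.

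If $\omega=0$, every component of the differential on generators given in Definition~\ref{def_CE} lands in the augmentation ideal, so $\CE(\mathfrak{g})$ is a Hinich algebra. Here I would choose a well-ordered exhaustive filtration $\mathfrak{g}=\bigcup_\alpha \mathfrak{g}_\alpha$ by sub-dglas, each obtained from the previous by adjoining one homogeneous element together with its differential. Dualising, this presents $\CE(\mathfrak{g})$ as a transfinite composition of pushouts of the form $\hat{S}V_\alpha \to \hat{S}(V_\alpha \oplus \langle y^*\rangle)$ with $dy^* \in \hat{S}V_\alpha$; these are the generating cofibrations of the Hinich model structure, so their transfinite composition is a Hinich cofibration, and hence, by tensoring with the identity $k\to k$, a cofibration in $\mathcal{E}$.

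If $\omega\neq 0$, choose $\omega^*\in \Sigma^{-1}\mathfrak{g}^*$ dual to $\omega$, so that by Definition~\ref{def_CE} the differential reads $d\omega^*=1+\psi$ with $\psi$ in the maximal ideal. The idea is to absorb $\omega^*$ into a tensor factor isomorphic to $\Lambda$. I would iteratively correct $\omega^*\mapsto \tilde\omega^*=\omega^*+\phi_1+\phi_2+\dotsb$ so that $d\tilde\omega^*=1$ exactly, convergence being ensured by working in the augmentation completion. Next, I would similarly adjust the generators of a chosen closed complement $V$ to $k\tilde\omega^*$, so that their differentials involve no factor of $\tilde\omega^*$; the relation $d^2=0$ constrains the obstructions so that each iterative modification lies in the image of $d|_{\hat{S}V}$. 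The outcome is a tensor splitting $\CE(\mathfrak{g})\cong \hat{S}V \hat{\otimes} \Lambda$ of cdgas, with $\hat{S}V$ Hinich cofibrant by the first case and $\Lambda$ the acyclic generator $k[\tilde\omega^*]/(\tilde\omega^*)^2$ with $d\tilde\omega^*=1$. This exhibits $k\to \CE(\mathfrak{g})$ as the tensor product of a Hinich cofibration with $k\hookrightarrow \Lambda$, a cofibration in $\mathcal{E}$.

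The main obstacle is this simultaneous change of variables in the curved case: both $\omega^*$ and the directions complementary to it must be adjusted to achieve a genuine tensor splitting, rather than one only up to the augmentation filtration. Completeness of the pseudo-compact topology together with $d^2=0$ drive the convergence, but the inductive bookkeeping—verifying that the adjustments on $V$ do not reintroduce dependence on $\tilde\omega^*$—is the delicate point, and is where the explicit constant/linear/quadratic structure of the Chevalley--Eilenberg differential of Definition~\ref{def_CE} must be carefully unwound.
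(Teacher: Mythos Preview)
Your approach is quite different from the paper's. The paper gives a one-line argument: every object of $\mathcal{G}$ is fibrant (the map to the terminal object is surjective), and the contravariant functor $\CE$---as half of the Quillen pair recalled from \cite{chuang_laz_mannan}---sends fibrations to cofibrations; applying $\CE$ to $\mathfrak{g}\to 0$ gives the cofibration $k\to\CE(\mathfrak{g})$. No case split, no explicit filtration, no change of variables.

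Your direct attack via Definition~\ref{def_model_for_extended} is a legitimate alternative, but as written it has two soft spots. First, in the $\omega=0$ case your filtration runs the wrong way: an exhaustive filtration $\mathfrak{g}=\bigcup_\alpha\mathfrak{g}_\alpha$ by sub-dglas dualises to a tower of \emph{surjections} $\hat S\Sigma^{-1}\mathfrak{g}_{\alpha+1}^*\twoheadrightarrow\hat S\Sigma^{-1}\mathfrak{g}_\alpha^*$, exhibiting $\CE(\mathfrak{g})$ as an inverse limit rather than a transfinite composition of pushouts; in the pseudo-compact Hinich setting one must instead invoke the standard fact (from Hinich's work, or the dual of the cofree-coalgebra characterisation) that any complete free graded-commutative algebra with differential landing in the maximal ideal is cofibrant. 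Second, in the $\omega\neq 0$ case your appeal ``Hinich cofibrant by the first case'' is not justified: after your iterative change of variables the differential on $\hat S V$ need no longer be at most quadratic, so $\hat S V$ is not a priori $\CE$ of a dgla and your first case does not literally apply. You again need the general cofibrancy-of-$\hat S V$ fact, or alternatively the structural result from \cite{chuang_laz_mannan} that every acyclic algebra splits as $A^H\hat\otimes\Lambda$ with $A^H$ Hinich. Either route repairs your argument, but both amount to importing exactly the kind of black-box input that the paper's Quillen-functor proof uses in a single stroke.
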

\begin{proof}
Every object of $\mathcal{G}$ is fibrant and $\CE$ maps fibrations to cofibrations.
\end{proof}

\begin{definition}
A contravariant functor $\mathcal{L}\colon\mathcal{E}\to\mathcal{G}$ is given by sending a local pseudo-compact cdga $A$ to the free graded Lie algebra $\operatorname{L}\Sigma^{-1}A^*$ with differential induced by the differential of $A$ and the multiplication of $A$, and curvature given by the morphism $k\to\Sigma^{-1} A^*$ induced by the composition of the augmentation of $A$ with the differential of $A$.
\end{definition}

\begin{remark}
If $A$ is a Hinich algebra the image of the differential on $A$ is its maximal ideal $m_A$ which is precisely the kernel of the augmentation of $A$ and so $\mathcal{L}(A)$ has zero curvature.
\end{remark}

\begin{proposition}
For any given local pseudo-compact cdga, $A$, the curved Lie algebra $\mathcal{L}(A)$ is a cofibrant object in $\mathcal{G}$.
\end{proposition}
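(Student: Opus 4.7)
The plan is to mirror the proof of the preceding proposition about $\CE$, which rested on two ingredients: every object of $\mathcal{G}$ is fibrant, and $\CE$ carries fibrations to cofibrations. The analogous claims to establish are that every object of $\mathcal{E}$ is fibrant, and that $\mathcal{L}$ carries fibrations of $\mathcal{E}$ to cofibrations of $\mathcal{G}$.

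First I would verify fibrancy: by Definition \ref{def_model_for_extended}, the unique morphism $A\to \Lambda$ is a fibration if and only if $\Lambda^H$ is contained in its image. A direct computation shows $\Lambda^H = k$. Indeed, the maximal ideal $M_\Lambda$ is $kx$, concentrated in degree $1$; any element $\lambda+\mu x\in\Lambda$ has differential $\mu\in k$, which lies in $M_\Lambda$ only when $\mu=0$, so $\Lambda^H = k$. Since every unital cdga morphism sends $1$ to $1$, its image always contains $k=\Lambda^H$, and fibrancy of $A$ follows.

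Second I would invoke the contravariant Quillen adjunction between $\mathcal{G}$ and $\mathcal{E}$ from \cite{chuang_laz_mannan}, which gives a natural bijection $\Hom_\mathcal{E}(\CE(\mathfrak{g}),A)\cong\Hom_\mathcal{G}(\mathcal{L}(A),\mathfrak{g})$. To verify cofibrancy of $\mathcal{L}(A)$ directly via the lifting property, I would take any acyclic fibration $p\colon\mathfrak{x}\twoheadrightarrow\mathfrak{y}$ of $\mathcal{G}$ and any morphism $f\colon\mathcal{L}(A)\to\mathfrak{y}$. Transposing $f$ across the adjunction produces $\tilde f\colon\CE(\mathfrak{y})\to A$, while applying $\CE$ to $p$ yields an acyclic cofibration $\CE(\mathfrak{y})\hookrightarrow\CE(\mathfrak{x})$ in $\mathcal{E}$ (this is the content of $\CE$ being a contravariant left Quillen functor). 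Fibrancy of $A$ then provides an extension $\CE(\mathfrak{x})\to A$ of $\tilde f$ along $\CE(p)$, and transposing this extension back across the adjunction supplies the required lift $\mathcal{L}(A)\to\mathfrak{x}$ of $f$ along $p$.

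The only plausible difficulty lies in matching the precise form of the adjunction with the classes of Definitions \ref{def_classes_strict} and \ref{def_model_for_extended}, in particular checking that $\CE$ really does convert acyclic fibrations of $\mathcal{G}$ to acyclic cofibrations of $\mathcal{E}$ in the curved (as opposed to purely dgla) setting; however, this is exactly part of what is established in \cite{chuang_laz_mannan}, and so the main step of the argument reduces to the two clean observations above.
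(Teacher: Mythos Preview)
Your proposal is correct and follows essentially the same approach as the paper: the paper's one-line proof is precisely ``Every object of $\mathcal{E}$ is fibrant and $\mathcal{L}$ maps fibrations to cofibrations,'' which is exactly the plan you announce. Your explicit computation of $\Lambda^H=k$ and your verification of the lifting property via the adjunction and the fact that $\CE$ sends acyclic fibrations to acyclic cofibrations simply unpack these two ingredients rather than citing them directly from \cite{chuang_laz_mannan}.
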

\begin{proof}
Every object of $\mathcal{E}$ is fibrant and $\mathcal{L}$ maps fibrations to cofibrations.
\end{proof}

Moving from the categories $\mathcal{G}$ and $\mathcal{E}$ to the categories $\curvedLie_*$ and $\Prod(\mathcal{E})$, the contravariant functors $\mathcal{L}$ and $\CE$ are extended as follows.

\begin{definition}
A contravariant functor $\widetilde{\CE}\colon\curvedLie_*\to \Prod(\mathcal{E})$ is given by
\[
\widetilde{\CE}((\mathfrak{g},d,\omega),\lbrace x_i \rbrace_{i\in I}) := \prod_{i\in I} \CE(\mathfrak{g}, d+ad_{x_i},\omega + dx_i +\frac{1}{2}[x_i,x_i]).
\]
Given a morphism of marked curved Lie algebras
\[
(f,\alpha )\colon(\mathfrak{g},\lbrace x_i \rbrace_{i\in I})\to (\mathfrak{h},\lbrace y_j \rbrace_{j\in J}),
\]
let the $k$th component of the morphism $\widetilde{\CE}(\mathfrak{h},\lbrace y_j \rbrace_{j\in J})\to \widetilde{\CE}(\mathfrak{g},\lbrace x_i \rbrace_{i\in I})$ be given by applying $\CE$ to the rectification $\mathfrak{g}^{x_k}\to\mathfrak{h}^{y_{j_k}}$ considered as a morphism in $\mathcal{G}$.
\end{definition}

\begin{remark}
The contravariant functor $\widetilde{\CE}$ takes a marked Lie algebra to a formal product of Hinich algebras and acyclic algebras depending upon whether the marked point is a MC element or not.
\end{remark}

\begin{definition}
A contravariant functor $\widetilde{\mathcal{L}}\colon\Prod (\mathcal{E})\to\curvedLie_*$ is given by
\[
\widetilde{\mathcal{L}} \left(\prod_{i\in I} A_i \right) :=\coprod_{i\in I} (\mathcal{L}(A_i),\lbrace 0 \rbrace ),
\]
where $\coprod$ is the coproduct of the category $\curvedLie_*$, c.f.\ Proposition \ref{prop_coprod}. Given any morphism $f\colon\prod_{i\in I} A_i \to \prod_{j\in J} B_j$ of $\Prod (\mathcal{E})$, let the morphism
\[
\coprod_{j\in J} (\mathcal{L}(B_j),\lbrace 0\rbrace )\to \coprod_{i \in I} (\mathcal{L}(A_i),\lbrace 0\rbrace )
\]
be obtained by combining each of the components
\[
(\mathcal{L}(f_j),0)\colon (\mathcal{L}(B_j),\lbrace 0 \rbrace )\to (\mathcal{L}(A_{i_j}),\lbrace 0 \rbrace ).
\]
\end{definition}

\begin{proposition}
The contravariant functor $\widetilde{\mathcal{L}}$ is left adjoint to the contravariant functor $\widetilde{\CE}$.
\end{proposition}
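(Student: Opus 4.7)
The plan is to establish the natural isomorphism directly by reducing both sides of the purported adjunction to the same explicit product-coproduct expression, making use of three inputs already available: the universal property of the coproduct in $\curvedLie_*$ (Proposition \ref{prop_coprod}), the description of hom-sets in $\Prod(\mathcal{E})$ (Proposition \ref{prop_algebra_morphisms}, which holds equally for $\Prod(\mathcal{E})$), and the Quillen adjunction $\mathcal{L}\dashv\CE$ between $\mathcal{E}$ and $\mathcal{G}$ from \cite{chuang_laz_mannan}. Recall that, since both functors are contravariant, the statement to be proved is a natural isomorphism
\[
\Hom_{\curvedLie_*}\!\left(\widetilde{\mathcal{L}}\!\left(\prod_{i\in I} A_i\right),(\mathfrak{g},\lbrace x_j\rbrace_{j\in J})\right)\cong \Hom_{\Prod(\mathcal{E})}\!\left(\widetilde{\CE}(\mathfrak{g},\lbrace x_j\rbrace_{j\in J}),\prod_{i\in I} A_i\right).
\]

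First I would unfold the left-hand side. By the universal property of the coproduct the left-hand side equals $\prod_{i\in I}\Hom_{\curvedLie_*}((\mathcal{L}(A_i),\lbrace 0\rbrace),(\mathfrak{g},\lbrace x_j\rbrace_{j\in J}))$. A morphism $(f,\alpha)\colon(\mathcal{L}(A_i),\lbrace 0\rbrace)\to(\mathfrak{g},\lbrace x_j\rbrace_{j\in J})$ of marked curved Lie algebras must send the single marked point $0$ to some $x_{j_i}$, forcing $\alpha=-x_{j_i}$ for some $j_i\in J$; rectification by the marked point $0$ (Definition \ref{def_rectification}) then yields a strict morphism $\mathcal{L}(A_i)=\mathcal{L}(A_i)^{0}\to\mathfrak{g}^{x_{j_i}}$, and this process is a bijection since the inverse is precisely untwisting. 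Summing over the choice of $j_i$, the left-hand side becomes $\prod_{i\in I}\coprod_{j\in J}\Hom_{\mathcal{G}}(\mathcal{L}(A_i),\mathfrak{g}^{x_j})$.

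Next I would unfold the right-hand side. By definition $\widetilde{\CE}(\mathfrak{g},\lbrace x_j\rbrace_{j\in J})=\prod_{j\in J}\CE(\mathfrak{g}^{x_j})$, where I write $\mathfrak{g}^{x_j}$ for the curved Lie algebra with differential $d+\operatorname{ad}_{x_j}$ and curvature $\omega+dx_j+\frac{1}{2}[x_j,x_j]$ (as in Remark \ref{rem_MC_twist}). Proposition \ref{prop_algebra_morphisms}, applied to $\Prod(\mathcal{E})$, gives
\[
\Hom_{\Prod(\mathcal{E})}\!\left(\prod_{j\in J}\CE(\mathfrak{g}^{x_j}),\prod_{i\in I}A_i\right)\cong\prod_{i\in I}\coprod_{j\in J}\Hom_{\mathcal{E}}(\CE(\mathfrak{g}^{x_j}),A_i),
\]
and the Quillen adjunction $\mathcal{L}\dashv\CE$ of \cite{chuang_laz_mannan} converts each summand into $\Hom_{\mathcal{G}}(\mathcal{L}(A_i),\mathfrak{g}^{x_j})$. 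Composing these identifications yields the same expression obtained for the left-hand side.

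The final step is naturality: one checks that a morphism $\prod_i A_i \to \prod_{i'} A'_{i'}$ in $\Prod(\mathcal{E})$ and a morphism $(\mathfrak{g},\lbrace x_j\rbrace)\to(\mathfrak{h},\lbrace y_k\rbrace)$ in $\curvedLie_*$ act in compatible ways on the common expression $\prod_i\coprod_j\Hom_{\mathcal{G}}(\mathcal{L}(A_i),\mathfrak{g}^{x_j})$. The only subtle point is that a curved morphism on the Lie side may reindex marked points and introduce a twisting $\alpha$; but by construction of $\widetilde{\CE}$ the $j$th component of the induced morphism of products is exactly $\CE$ applied to the rectified strict morphism $\mathfrak{g}^{x_j}\to\mathfrak{h}^{y_{k_j}}$, which is precisely the rectification used on the other side of the bijection. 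Thus the identifications commute with the relevant functorial actions, completing the proof. The principal obstacle is purely bookkeeping: tracking the shifts $\alpha\mapsto-x_{j_i}$ during rectification and matching them with the shift in the indexing $j\mapsto j_i$ appearing in Proposition \ref{prop_algebra_morphisms}; no genuinely new computation is required.
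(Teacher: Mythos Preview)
Your proof is correct and follows essentially the same route as the paper: both arguments reduce to the adjunction $\mathcal{L}\dashv\CE$ between $\mathcal{E}$ and $\mathcal{G}$, combined with the description of hom-sets in $\Prod(\mathcal{E})$ on one side and the universal property of the coproduct (plus rectification by the marked point $0$) in $\curvedLie_*$ on the other. Your presentation is slightly more streamlined in that you identify both hom-sets with the common expression $\prod_i\coprod_j\Hom_{\mathcal{G}}(\mathcal{L}(A_i),\mathfrak{g}^{x_j})$ and then explicitly address naturality, whereas the paper writes down the maps in each direction and leaves bijectivity and naturality implicit.
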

\begin{proof}
The statement of the proposition is equivalent to the statement there exists an isomorphism of the sets
\[
\Hom_{\Prod(\mathcal{E})} \left(\widetilde{\CE}(\mathfrak{g},\lbrace x_i \rbrace_{i\in I}),\prod_{j\in J} A_j \right)\cong\Hom_{\curvedLie_*} \left(\widetilde{\mathcal{L}}\left(\prod_{j\in J} A_j \right), (\mathfrak{g},\lbrace x_i \rbrace_{i\in I})\right).
\]
Given a morphism $f\colon\prod_{i\in I} \CE (\mathfrak{g}^{x_i}) \to \prod_{j\in J} A_j$ of $\Prod (\mathcal{E})$, applying the contravariant functor $\mathcal{L}$ to the components, $f_j$, the morphisms 
\[
\mathcal{L}(f_j)\colon \mathcal{L}(A_j)\to \mathcal{L}(\CE(\mathfrak{g}^{x_{i_j}}))
\]
are obtained. Since the contravariant functors $\mathcal{L}$ and $\CE$ are adjoint, these morphisms are equivalent to the morphisms $\mathcal{L}(f_j)\colon\mathcal{L} (A_j)\to \mathfrak{g}^{x_{i_j}}$. Therefore, by the universal property of coproducts, for each $j\in J$ there exists the commutative diagram
\begin{center}
\begin{tikzpicture}
\matrix (m) [matrix of math nodes, row sep=2em, column sep=2em]{
 & \mathfrak{g} \\
\mathcal{L}(A_j) & \coprod_{j\in J} \mathcal{L}(A_j ). \\};
\path[->]
(m-2-1) edge node[auto] {$(id,-x_{i_j})\circ \mathcal{L}(f_j)$} (m-1-2)
(m-2-2) edge (m-1-2)
(m-2-1) edge (m-2-2);
\end{tikzpicture}
\end{center}
Clearly, the morphisms are marked and therefore it has been shown that $\widetilde{\mathcal{L}}(f)$ is a morphism in $\Hom_{\curvedLie_*} (\widetilde{\mathcal{L}}(\prod_{j\in J} A_j ), (\mathfrak{g},\lbrace x_i \rbrace_{i\in I}))$.

Conversely, given a morphism $f\colon\widetilde{\mathcal{L}}(\prod_{j\in J} A_j)\to(\mathfrak{g},\lbrace x_i \rbrace_{i\in I})$ one can easily use the fact that $\CE$ and $\mathcal{L}$ are an adjoint pair to show---in a similar manner to the preceding---that after applying the contravariant functor $\widetilde{\CE}$, a morphism equivalent to \[
\prod_{i\in I}\CE(\mathfrak{g}^{x_i})\to\prod_{j\in J} A_j
\]
is obtained.
\end{proof}

\begin{proposition}\label{prop_L_cofibrant}
The contravariant functors $\widetilde{\mathcal{L}}$ and $\widetilde{\CE}$ both map fibrations to cofibrations.
\end{proposition}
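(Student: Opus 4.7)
The proposition splits naturally into two separate arguments, one for each functor. Both should proceed by unwinding the component-wise definitions of the model structures on $\Prod(\mathcal{E})$ (Definition \ref{def_classes_prod}) and on $\curvedLie_*$ (Definition \ref{def_classes}), and appealing to the fact that $\CE$ and $\mathcal{L}$ exchange fibrations and cofibrations between $\mathcal{G}$ and $\mathcal{E}$.

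The case of $\widetilde{\CE}$ is the straightforward one. Let $(f,\alpha)\colon(\mathfrak{g},\lbrace x_i\rbrace_{i\in I})\to(\mathfrak{h},\lbrace y_j\rbrace_{j\in J})$ be a fibration in $\curvedLie_*$, so that by Definition \ref{def_classes} the graded Lie algebra map $f$ is surjective. Rectification by a marked point $x_k$ leaves this underlying map unchanged (only the differential, curvature, and $\alpha$-part are adjusted), so each rectified morphism $\mathfrak{g}^{x_k}\to\mathfrak{h}^{y_{j_k}}$ is a surjective strict morphism and hence a fibration in $\mathcal{G}$ by Definition \ref{def_classes_strict}. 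Applying $\CE$, which sends fibrations to cofibrations, exhibits each component of $\widetilde{\CE}(f,\alpha)$ as a cofibration in $\mathcal{E}$, and by Definition \ref{def_classes_prod} this makes $\widetilde{\CE}(f,\alpha)$ a cofibration in $\Prod(\mathcal{E})$.

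The case of $\widetilde{\mathcal{L}}$ is the more delicate one, and is where I expect the main obstacle. Let $f\colon\prod_{i\in I}A_i\to\prod_{j\in J}B_j$ be a fibration in $\Prod(\mathcal{E})$, so each $f^i\colon A_i\to B^i$ is a fibration in $\mathcal{E}$. I plan to verify the LLP of $\widetilde{\mathcal{L}}(f)\colon\coprod_j(\mathcal{L}(B_j),\lbrace 0\rbrace)\to\coprod_i(\mathcal{L}(A_i),\lbrace 0\rbrace)$ against an arbitrary acyclic fibration in $\curvedLie_*$ directly. A commutative lifting square, together with the universal property of the coproduct (Proposition \ref{prop_coprod}), produces a per-component lifting problem for each $j\in J$, which after suitable rectification (invoking Proposition \ref{prop_preserved_classes_in_G}) lives in $\mathcal{G}$. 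Each component problem reduces to lifting $\mathcal{L}(f_j)$ against an acyclic fibration of $\mathcal{G}$; this is possible provided $\mathcal{L}(f_j)$ is a cofibration in $\mathcal{G}$, which follows from observing that the component $f_j\colon A_{i_j}\to B_j$ extracted from $f^{i_j}$ is itself a fibration in $\mathcal{E}$, the image condition on $B_j^H$ being inherited from the one on $(B^{i_j})^H$ via the relevant projection.

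The remaining step is to reassemble these per-component lifts into a single morphism in $\curvedLie_*$. The principal obstacle is bookkeeping: one must check that the lifts are compatible with the formal Maurer-Cartan elements introduced by the coproduct construction of Proposition \ref{prop_coprod} and with the induced marked points. I expect this to follow cleanly from the strictness of the rectified morphisms together with the fact (recorded in Proposition \ref{prop_MC_to_MC}) that marked points are preserved along curved morphisms, so that the lifts, once twisted back by the appropriate formal elements, glue to a well-defined morphism of marked curved Lie algebras. The initial-object edge case handled separately in Definition \ref{def_classes} is trivial.
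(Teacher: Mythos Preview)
Your treatment of $\widetilde{\CE}$ is correct and matches the paper's (terse) argument. For $\widetilde{\mathcal{L}}$ your overall strategy---verify the LLP directly, rectify to strict morphisms, lift in $\mathcal{G}$, and reassemble via the coproduct---is also the paper's strategy. But there is a genuine gap in the indexing you choose, and it is not the bookkeeping you flag.

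You propose to produce one lifting problem per $j\in J$ and lift each $\mathcal{L}(f_j)\colon \mathcal{L}(B_j)\to\mathcal{L}(A_{i_j})$. This yields a $J$-indexed family of lifts $\mathcal{L}(A_{i_j})\to X$. But the morphism you must build is out of $\coprod_{i\in I}\mathcal{L}(A_i)$, and the universal property of that coproduct demands exactly one map $\mathcal{L}(A_i)\to X$ for each $i\in I$. When the set map $J\to I$ is not injective, two indices $j\neq j'$ with $i_j=i_{j'}=i$ will in general produce two \emph{different} lifts $\mathcal{L}(A_i)\to X$; selecting one of them destroys commutativity of the upper triangle for the other $j$. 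When $J\to I$ is not surjective, some $i$ receives no lift at all. Your proposed reassembly step therefore does not go through, and the obstacle is this indexing mismatch rather than the formal MC elements or marked points.

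The paper avoids this by working per $i\in I$: it lifts $\mathcal{L}(f^i)\colon \mathcal{L}(B^i)\to\mathcal{L}(A_i)$, where $B^i$ is the product in $\mathcal{E}$ of those $B_j$ with $i_j=i$. Since the contravariant adjoint $\mathcal{L}$ takes this product to the coproduct $\coprod_{\{j:\,i_j=i\}}\mathcal{L}(B_j)$, all constraints coming from the various $j$ over a fixed $i$ are packaged into a single lifting problem, and the resulting single lift $\mathcal{L}(A_i)\to X$ is automatically compatible with every such $j$. These $I$-indexed lifts then assemble cleanly via the coproduct. The fix to your plan is exactly to replace $f_j$ by $f^i$ throughout.
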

\begin{proof}
Given a fibration $f\colon\prod_{i\in I} A_i \to \prod_{j\in J} B_j$ of $\Prod(\mathcal{E})$, for each $i\in I$ the morphism $f^i \colon A_i\to \prod_{\lbrace j : j\mapsto i \rbrace} B_j$ is  a fibration of $\mathcal{E}$. Hence for each $i \in I$ the morphism $\mathcal{L}(f^i)$ is a cofibration. Let
\begin{center}
\begin{tikzpicture}
\matrix (m) [matrix of math nodes, row sep=2em, column sep=2em]{
\left(\mathcal{L}\left(\prod_{\lbrace j : j\mapsto i \rbrace} B_j\right),\lbrace 0 \rbrace\right) & \coprod_{j\in J} (\mathcal{L}(B_j),\lbrace 0 \rbrace) & (X,\lbrace \xi_k \rbrace_{k\in K} )\\
(\mathcal{L}(A_i),\lbrace 0 \rbrace) & \coprod_{i\in I} (\mathcal{L}(A_i),\lbrace 0 \rbrace) & (Y,\lbrace\nu_k\rbrace_{k\in K}), \\};
\path[->]
(m-1-1) edge (m-1-2)
(m-1-1) edge (m-2-1)
(m-1-2) edge (m-1-3)
(m-1-2) edge (m-2-2)
(m-2-1) edge (m-2-2)
(m-1-3) edge (m-2-3)
(m-2-2) edge (m-2-3);
\end{tikzpicture}
\end{center}
be a commutative diagram in $\curvedLie_*$ and $(X,\lbrace \xi_k \rbrace_{k\in K)})\to(Y,\lbrace\nu_k\rbrace_{k\in K})$ be an acyclic fibration. Further, it is assumed to be strict, c.f.\ Proposition \ref{prop_rect_square}. The left hand morphism is a cofibration of $\mathcal{G}$ and as such there exists some lift $(\mathcal{L}(A_i),\lbrace 0 \rbrace )\to (X,\lbrace \xi_k \rbrace_{k\in K})$ for each $i\in I$. Using the universal property of coproducts, there exists a morphism $\coprod_{i\in I} (\mathcal{L}(A_i),\lbrace 0 \rbrace )\to (X,\lbrace \xi_k \rbrace_{k\in K})$ making everything commute. Hence, the morphism $\widetilde{\mathcal{L}}(f)$ is a cofibration. The proof for $\widetilde{\CE}$ is similar.
\end{proof}

\begin{proposition}
The contravariant functors $\widetilde{\CE}$ and $\widetilde{\mathcal{L}}$ preserve weak equivalences.
\end{proposition}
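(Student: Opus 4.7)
The plan is to reduce preservation of weak equivalences for $\widetilde{\CE}$ and $\widetilde{\mathcal{L}}$ to the corresponding fact for the underlying contravariant functors $\CE\colon\mathcal{G}\to\mathcal{E}$ and $\mathcal{L}\colon\mathcal{E}\to\mathcal{G}$ of Chuang--Lazarev--Mannan, and then unpack the definitions of weak equivalences in $\curvedLie_*$ and $\Prod(\mathcal{E})$. The first preliminary step is to observe that $\CE$ and $\mathcal{L}$ each preserve all weak equivalences. Indeed, the proofs that $\CE(\mathfrak{g})$ and $\mathcal{L}(A)$ are cofibrant rely on every object of $\mathcal{G}$ and of $\mathcal{E}$ being fibrant; since the contravariant pair $(\mathcal{L},\CE)$ forms a Quillen equivalence, a standard application of Ken Brown's lemma then yields preservation of all weak equivalences, not just those between cofibrant objects.

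For $\widetilde{\CE}$, suppose $(f,\alpha)\colon(\mathfrak{g},\lbrace x_i\rbrace_{i\in I})\to(\mathfrak{h},\lbrace y_j\rbrace_{j\in J})$ is a weak equivalence in $\curvedLie_*$. By Definition \ref{def_classes}, the induced set map $I\to J$, $i\mapsto j_i$, is a bijection, so $\widetilde{\CE}(f,\alpha)$ induces a bijection of indexing sets $J\to I$ as required by Definition \ref{def_classes_prod}. For each $k\in I$ the rectification $\mathfrak{g}^{x_k}\to\mathfrak{h}^{y_{j_k}}$ is a weak equivalence in $\mathcal{G}$, so applying $\CE$ yields a weak equivalence in $\mathcal{E}$; this is precisely the $k$th component of $\widetilde{\CE}(f,\alpha)$. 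Hence $\widetilde{\CE}(f,\alpha)$ is a weak equivalence in $\Prod(\mathcal{E})$.

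For $\widetilde{\mathcal{L}}$, suppose $f\colon\prod_{i\in I}A_i\to\prod_{j\in J}B_j$ is a weak equivalence in $\Prod(\mathcal{E})$. Then $J\to I$, $j\mapsto i_j$, is a bijection and each $f_j\colon A_{i_j}\to B_j$ is a weak equivalence in $\mathcal{E}$, so each $\mathcal{L}(f_j)\colon\mathcal{L}(B_j)\to\mathcal{L}(A_{i_j})$ is a weak equivalence in $\mathcal{G}$. On the coproduct level, the marked points of $\coprod_{j\in J}(\mathcal{L}(B_j),\lbrace 0\rbrace)$ are indexed by $J$ (one per summand, after the formal $z$-generators of Proposition \ref{prop_coprod}) and similarly for $\coprod_{i\in I}(\mathcal{L}(A_i),\lbrace 0\rbrace)$; the bijection $J\to I$ provides the required bijection of marked points. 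To verify the rectification condition, fix a marked point corresponding to $j_0\in J$: after rectification by this marked point, the formal $z$-generators become genuine MC elements and the twisted domain and codomain decompose as strict coproducts in $\mathcal{G}$ of the cofibrant objects $\mathcal{L}(B_j)$ and $\mathcal{L}(A_{i_j})$ respectively. The rectified morphism is then (up to isomorphism in $\mathcal{G}$) the coproduct in $\mathcal{G}$ of the weak equivalences $\mathcal{L}(f_j)$; since coproducts of weak equivalences between cofibrant objects in $\mathcal{G}$ are themselves weak equivalences (free product of dglas preserves quasi-isomorphisms, and in the curved case both sides have the same non-zero curvature), the rectification is a weak equivalence in $\mathcal{G}$, completing the proof.

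The main obstacle is the $\widetilde{\mathcal{L}}$ half: one must carefully identify, after rectification at a given marked point, the resulting strict morphism with a coproduct of the $\mathcal{L}(f_j)$ in $\mathcal{G}$, using the explicit form of the coproduct from Proposition \ref{prop_coprod}, and then invoke (or verify directly) that strict coproducts in $\mathcal{G}$ of cofibrant objects preserve weak equivalences. The $\widetilde{\CE}$ half is essentially a definition-chase once one has preservation of weak equivalences by $\CE$.
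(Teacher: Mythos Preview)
Your treatment of $\widetilde{\CE}$ matches the paper's: it is the same definition-chase reducing to preservation of weak equivalences by $\CE$.

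For $\widetilde{\mathcal{L}}$ your overall principle---that a coproduct of weak equivalences between cofibrant objects is a weak equivalence---is the same one the paper invokes, but you apply it in $\mathcal{G}$ after rectification, and the identification you rely on is not correct. After rectifying $\coprod_{j\in J}(\mathcal{L}(B_j),\{0\})$ at the marked point corresponding to some $j_0$, the resulting curved Lie algebra still carries the formal generators $z$ introduced by the $\curvedLie_*$-coproduct of Proposition~\ref{prop_coprod}; it is \emph{not} isomorphic in $\mathcal{G}$ to the coproduct in $\mathcal{G}$ of the $\mathcal{L}(B_j)$ (the $\mathcal{G}$-coproduct is the free product with the curvatures identified and contains no such extra generators). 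Consequently the rectified morphism is not literally a $\mathcal{G}$-coproduct of the $\mathcal{L}(f_j)$, and the sentence ``the twisted domain and codomain decompose as strict coproducts in $\mathcal{G}$'' is unjustified as stated.

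The paper avoids this by staying in $\curvedLie_*$. Each $(\mathcal{L}(f_j),0)\colon(\mathcal{L}(B_j),\{0\})\to(\mathcal{L}(A_{i_j}),\{0\})$ is already a weak equivalence between cofibrant objects of $\curvedLie_*$, and $\widetilde{\mathcal{L}}(f)$ is by construction their coproduct in $\curvedLie_*$. Since coproducts of cofibrant objects compute homotopy coproducts, $\widetilde{\mathcal{L}}(f)$ becomes a coproduct of isomorphisms in $\Ho(\curvedLie_*)$, hence an isomorphism there, hence a weak equivalence. No explicit analysis of the rectified morphism is needed. Your route could in principle be completed by relating the rectified morphism to a $\mathcal{G}$-coproduct through an auxiliary weak equivalence, but that is extra work the paper's argument does not require.
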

\begin{proof}
Given a weak equivalence $(f,\alpha)\colon(\mathfrak{g},\lbrace x_i \rbrace_{i\in I})\to (\mathfrak{h},\lbrace y_i \rbrace_{i\in I})$ of $\curvedLie_*$, it is evident by definition that for all $i\in I$ the morphism $\CE(\mathfrak{h}^{y_i})\to \CE(\mathfrak{g}^{x_i})$ is a weak equivalence of $\mathcal{E}$, whence $\widetilde{\CE}(f,\alpha)$ is a weak equivalence.

Suppose $f\colon\prod_{i\in I} A_i \to \prod_{i\in I} B_i$ is a weak equivalence of $\Prod(\mathcal{E})$. Clearly $\widetilde{\mathcal{L}}(f)$ induces a bijection upon marked points. Now, $f$ has components $f_i\colon A_i\to B_i$ which are weak equivalences in $\mathcal{E}$, hence $(\mathcal{L}(f_i),0)$ is a weak equivalence for each $i$. Since $\widetilde{\mathcal{L}}(A)$ is cofibrant for all $A$ (by Proposition \ref{prop_L_cofibrant}) one can conclude the coproducts $\coprod_i \widetilde{\mathcal{L}}(A_i)$ and $\coprod_i \widetilde{\mathcal{L}}(B_i)$ descend to the level of homotopy, where each morphism $(\mathcal{L}(f_i),0)$ is an isomorphism, which implies $\widetilde{\mathcal{L}}(f)$ is an isomorphism and so $\widetilde{\mathcal{L}}(f)$ is a weak equivalence.
\end{proof}

\begin{theorem}\label{thm_main}
The categories $\Prod(\mathcal{E})$ and $\curvedLie_*$ are Quillen equivalent.
\end{theorem}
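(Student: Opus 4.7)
The plan is to verify that $(\widetilde{\mathcal{L}}, \widetilde{\CE})$ forms a Quillen adjunction and then upgrade it to a Quillen equivalence by reducing, componentwise, to the Quillen equivalence between $\mathcal{E}$ and $\mathcal{G}$ of \cite{chuang_laz_mannan}. The Quillen condition is already at hand: Proposition \ref{prop_L_cofibrant} shows both contravariant functors send fibrations to cofibrations, and the preservation of all weak equivalences (just shown) then forces acyclic fibrations to acyclic cofibrations. A useful bonus is that, because both functors preserve all weak equivalences, the derived functors agree with the original ones on the nose, so no cofibrant or fibrant replacement is required when analysing the derived unit and counit.

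For the Quillen equivalence it suffices to show that for every $\prod_{i \in I} A_i \in \Prod(\mathcal{E})$ and every $(\mathfrak{g}, \{x_i\}_{i \in I}) \in \curvedLie_*$, the unit $\prod_{i \in I} A_i \to \widetilde{\CE}(\widetilde{\mathcal{L}}(\prod_{i \in I} A_i))$ and the counit $(\mathfrak{g}, \{x_i\}_{i \in I}) \to \widetilde{\mathcal{L}}(\widetilde{\CE}(\mathfrak{g}, \{x_i\}_{i \in I}))$ are weak equivalences. Unwinding the definitions,
\[
\widetilde{\CE}\bigl(\widetilde{\mathcal{L}}(\textstyle\prod_{i \in I} A_i)\bigr) = \prod_{i \in I} \CE(\mathfrak{G}^{x_i}),
\]
where $(\mathfrak{G}, \{x_i\}_{i \in I}) := \coprod_{i \in I} (\mathcal{L}(A_i), \{0\})$, and the induced map of indexing sets is the identity. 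It therefore suffices to check that each component $A_i \to \CE(\mathfrak{G}^{x_i})$ is a weak equivalence in $\mathcal{E}$. By the construction of the coproduct in Proposition \ref{prop_coprod}, twisting $\mathfrak{G}$ by $x_i$ is designed precisely so that $\mathcal{L}(A_i)$ sits inside $\mathfrak{G}^{x_i}$ as the untwisted summand, while the remaining summands and the formal MC elements contribute acyclically under $\CE$. This identifies the component with the old Koszul unit $A_i \to \CE(\mathcal{L}(A_i))$ up to a weak equivalence, and the latter is a weak equivalence by the Quillen equivalence of $\mathcal{E}$ and $\mathcal{G}$.

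A symmetric argument handles the counit. Rectifying at each marked point (Definition \ref{def_rectification}) and invoking Proposition \ref{prop_preserved_classes_in_G}, one reduces to checking that each rectified component is a weak equivalence in $\mathcal{G}$; up to absorbing the cohomologically trivial free generators of the coproduct construction, the $i$-th rectified component becomes the old unit $\mathfrak{g}^{x_i} \to \mathcal{L}(\CE(\mathfrak{g}^{x_i}))$ in $\mathcal{G}$, hence a weak equivalence.

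The main obstacle will be the precise identification $\CE(\mathfrak{G}^{x_i}) \simeq \CE(\mathcal{L}(A_i))$ and its counterpart on the other side: both require a careful analysis of how coproducts in $\curvedLie_*$ interact with the twisting $(-)^{x_i}$ and with $\CE$, verifying that the additional formal MC elements and the off-diagonal summands contribute acyclically once the twisting is applied. This is a Koszul-dual avatar of the distributive law between products and coproducts, analogous in spirit to the formal coproduct computations of \cite{chuang_laz_mannan}.
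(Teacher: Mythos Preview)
Your strategy coincides with the paper's: reduce, componentwise, to the Quillen equivalence $\mathcal{E}\simeq\mathcal{G}$ of \cite{chuang_laz_mannan} and invoke \cite[Theorem~9.7]{dwyer_spalinski}. The paper's own proof is a single sentence citing that theorem together with the adjunction, Proposition~\ref{prop_L_cofibrant}, and the preservation of weak equivalences; you have simply unpacked what that citation demands by writing out the unit/counit check. In that sense there is no genuinely different route here.

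Two comments. First, a minor correction: for this contravariant adjunction the canonical maps run the other way, namely $\widetilde{\CE}\widetilde{\mathcal{L}}\bigl(\prod_i A_i\bigr)\to\prod_i A_i$ in $\Prod(\mathcal{E})$ and $\widetilde{\mathcal{L}}\widetilde{\CE}(\mathfrak{g},\{x_i\})\to(\mathfrak{g},\{x_i\})$ in $\curvedLie_*$ (trace the identity through the Hom-set bijection). This does not change what must be verified.

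Second, and more substantively: you are right that the crux is the identification of each component with the old Koszul unit/counit, but your phrase ``the remaining summands and the formal MC elements contribute acyclically under $\CE$'' is not yet an argument. One cannot apply $\CE$ summand-by-summand; what is really needed is that the strict inclusion $\mathcal{L}(A_i)\hookrightarrow\mathfrak{G}^{x_i}$ is a weak equivalence in $\mathcal{G}$ (equivalently, after rectification at the $i$th marked point, the coproduct collapses to the $i$th summand up to weak equivalence). This amounts to showing that the free part generated by the other $\mathcal{L}(A_j)$ together with the formal elements $z$ is killed in homotopy once one twists by $x_i$---a statement in $\mathcal{G}$, not in $\mathcal{E}$. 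The paper does not spell this step out either, so your proposal is no less complete than the published proof; but if you want a self-contained argument, this is where the work lies, and the cleanest route is to exploit that each $(\mathcal{L}(A_j),\{0\})$ is cofibrant so that the coproduct in $\curvedLie_*$ computes the homotopy coproduct (exactly the mechanism used just above Theorem~\ref{thm_main} to show $\widetilde{\mathcal{L}}$ preserves weak equivalences).
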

\begin{proof}
One applies the preceding results with \cite[Theorem 9.7]{dwyer_spalinski}.
\end{proof}

\section{Deformation functors over pseudo-compact cdgas}\label{sec_deformation}

An application of the above constructions to algebraic deformation theory is contained within this section; more precisely, the theory is extended to deformation functors over (not necessarily local) pseudo-compact cdgas. The main result, Theorem \ref{thm_rep_def}, shows these deformation functors are representable in $\Ho(\mathcal{A})$. The story of the approach to deformation theory via dglas has its roots with Drinfeld \cite{drinfeld} and Kontsevich \cite{kontsevich_notes} among others, who noticed that deformation theories in characteristic zero are governed by the $\MC$ elements of dglas.

Here the category of sets is denoted by $\mathscr{S}$ and the notation $(A,m_A)$ refers to a local pseudo-compact cdga $A$ with maximal ideal given by $m_A$. 

\subsection{MC elements and local pseudo-compact cdga morphisms}

Fixing a curved Lie algebra $(\mathfrak{g},d_\mathfrak{g},\omega_\mathfrak{g})$ and a local pseudo-compact cdga, $(A,m_A)$, recall that the tensor product $(\mathfrak{g}\hat{\otimes}A,d,\omega_\mathfrak{g}\hat{\otimes} 1)$ is a well defined curved Lie algebra. The $\MC$ elements of the tensor product $\mathfrak{g}\hat{\otimes} A$ can be considered, in the usual sense, as those elements, $x\in \mathfrak{g}\hat{\otimes}A$, solving the $\MC$ equation: $\omega_\mathfrak{g} \hat{\otimes} 1 + dx +\frac{1}{2}[x,x]= 0$. However, for the construction of the deformation functor given later in this section, only the subset of those $\MC$ elements belonging to $\mathfrak{g}\hat{\otimes}m_A$ are examined.

\begin{definition}\label{def_MC_functor}
Let $\widetilde{\MC} (\mathfrak{g}\hat{\otimes} A)$ denote the set of $\MC$ elements belonging to the subset $\mathfrak{g}\hat{\otimes}m_A$.
\end{definition}

In fact $\widetilde{\MC}$ can be viewed as a bifunctorial construction from the product category of curved Lie algebras with local pseudo-compact cdga into the category of sets.

\begin{proposition}\label{prop_iso_func2}
The functors $(\mathfrak{g},A)\mapsto \Hom (\CE(\mathfrak{g}),A))$ and $(\mathfrak{g},A)\mapsto \widetilde{\MC}(\mathfrak{g}\hat{\otimes} A)$ are naturally isomorphic.
\end{proposition}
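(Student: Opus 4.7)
The plan is to establish the bijection on objects by unwinding the definition of $\CE(\mathfrak{g})=\hat{S}\Sigma^{-1}\mathfrak{g}^*$ as a completed free commutative algebra. First I would observe that any local pseudo-compact cdga morphism $\phi\colon\CE(\mathfrak{g})\to A$ is uniquely and freely determined by its restriction to the generating space $\Sigma^{-1}\mathfrak{g}^*$; since $A$ is local and $\Sigma^{-1}\mathfrak{g}^*$ lies in the augmentation ideal of $\CE(\mathfrak{g})$, this restriction necessarily factors through $m_A$. Continuous linear maps $\Sigma^{-1}\mathfrak{g}^*\to m_A$ of degree zero correspond naturally, via pseudo-compact duality (using $(\Sigma V)^*\cong \Sigma^{-1}V^*$), to elements of $\mathfrak{g}\hat{\otimes} m_A$ of homological degree $-1$, i.e.\ to candidates for Maurer--Cartan elements in $\widetilde{\MC}(\mathfrak{g}\hat{\otimes}A)$.

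Next I would show that under this identification the condition that $\phi$ commutes with the differentials on the two sides translates exactly into the Maurer--Cartan equation. Per Definition \ref{def_CE}, the differential on $\CE(\mathfrak{g})$ restricted to $\Sigma^{-1}\mathfrak{g}^*$ has three components: a scalar part dual to the curvature $\omega_\mathfrak{g}$, a linear part dual to $d_\mathfrak{g}$, and a quadratic part into $S^2\Sigma^{-1}\mathfrak{g}^*$ dual to the Lie bracket. Extending $\phi$ as a derivation-compatible map and pairing each component with the corresponding element of $\mathfrak{g}\hat{\otimes}m_A$ produces, respectively, the three summands $\omega_\mathfrak{g}\hat{\otimes}1$, $dx$, and $\tfrac{1}{2}[x,x]$ of the MC equation; the factor of $\tfrac{1}{2}$ arising naturally from the symmetric square $S^2$. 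The condition that $\phi$ respects the multiplication is automatic since $\phi$ is the free extension of a map on generators.

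Finally I would verify naturality. In $A$, naturality follows immediately since post-composition with a morphism $A\to A'$ on the $\Hom$-side corresponds to applying $1\hat{\otimes}(-)$ to the tensor element. In $\mathfrak{g}$ (strict morphisms, which is the relevant functoriality here since $\CE$ is defined on $\mathcal{G}$), precomposition with $\CE(f)$ corresponds to applying $f\hat{\otimes}1$, and both operations preserve the MC equation because $f$ intertwines curvatures, differentials, and brackets.

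The main obstacle is a careful bookkeeping of the suspension signs and the identification of the dual of the bracket with the quadratic piece of the MC term, including verifying that the symmetric-square normalisation produces precisely the factor $\tfrac{1}{2}$ in front of $[x,x]$; the remaining categorical content is formal.
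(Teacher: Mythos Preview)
Your proposal is correct and follows essentially the same approach as the paper: identify continuous degree-zero linear maps $\Sigma^{-1}\mathfrak{g}^*\to m_A$ with degree $-1$ elements of $\mathfrak{g}\hat{\otimes}m_A$, extend freely to an algebra map $\CE(\mathfrak{g})\to A$, and observe that compatibility with the differential is precisely the MC equation. The paper's proof is a terse two-sentence version of this; your added unpacking of the three components of the differential and the explicit naturality check are merely elaborations of the same argument, not a different route.
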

\begin{proof}
A degree $-1$ element in $\mathfrak{g}\hat{\otimes} m_A$ is a degree $0$ element in $\Sigma^{-1} \mathfrak{g}\hat{\otimes} m_A$, further this element determines (and is determined by) a continuous linear morphism $(\Sigma^{-1} \mathfrak{g})^* \to m_A$. In turn, this continuous linear morphism determines (and is determined) by a morphism of local pseudo-compact commutative graded algebras: $\CE (\mathfrak{g})\to A$. The condition that this morphism commutes with the differential is precisely the condition that the element belongs to $\widetilde{\MC} (\mathfrak{g}\hat{\otimes} A)$.
\end{proof}

The equivalence of functors in Proposition \ref{prop_iso_func2} motivates the notion of a homotopy between elements of the set $\widetilde{\MC}(\mathfrak{g}\hat{\otimes} A)$.

\begin{definition}\label{def_algebra_path_object}\hfill
\begin{itemize}
\item Let $k[z,dz]$ denote the free unital cdga on the generators $z$ and $dz$ with degrees $0$ and $-1$ respectively, with the differential given by $d(z)=dz$.
\item Given $A\in\mathcal{E}$, let $A[z,dz]$ be the cdga given by $A\hat{\otimes} k[z,dz]$, and let the quotient morphisms given by setting $z$ to $0$ or $1$ be denoted $|_0,|_1 \colon A[z,dz]\to A$, respectively.
\end{itemize}
\end{definition}

\begin{remark}
$k[z,dz]$ is the familiar de Rham algebra of forms on the unit interval.
\end{remark}

\begin{definition}
Two elements $\xi,\eta \in \widetilde{\MC} (\mathfrak{g}\hat{\otimes} A)$ are said to be homotopic if there exists an element $h\in \widetilde{\MC} (\mathfrak{g}\hat{\otimes} A[z,dz])$ such that $h|_0=\xi$ and $h|_1=\eta$.
\end{definition}

Using the isomorphism of functors in Proposition \ref{prop_iso_func2}, $h\in \widetilde{\MC}(\mathfrak{g}\hat{\otimes} A[z,dz])$ corresponds to a morphism of pseudo-compact cdgas $h\colon\CE(\mathfrak{g})\to A[z,dz]$ which when specialising to $z=0$ and $z=1$ restricts to the morphisms corresponding to $\xi,\eta\in \widetilde{\MC} (\mathfrak{g}\hat{\otimes} A)$, respectively. Thus, a homotopy of $\MC$ elements is nothing more than a Sullivan homotopy between the corresponding morphisms of local pseudo-compact cdgas, c.f.\ Appendix \ref{sec_sullivan}.

\begin{definition}
Let $\widetilde{\mathcal{MC}}(\mathfrak{g}\hat{\otimes}A)$ denote the set of equivalence classes of $\widetilde{\MC} (\mathfrak{g}\hat{\otimes} A)$ modulo the homotopy relation. This set is called the Maurer-Cartan moduli set of the curved Lie algebra $\mathfrak{g}$ with coefficients in the local pseudo-compact cdga $A$.
\end{definition}

\begin{remark}
The $\MC$ moduli set can be obtained in several ways in slightly specialised cases. For example, it can be seen as the set of connected components of the $\MC$ simplicial set; see \cite{laz_markl}, it should be noted, though, that pseudo-compact dglas are considered in op. cit. Additionally, a result of Schlessinger and Stasheff \cite{schlessinger_stasheff_deformation_rational} states that for a pronilpotent dgla, two $\MC$ elements are homotopic if, and only if, they are gauge equivalent: see \cite{chuang_laz_feynman} for a discussion and a proof.
\end{remark}

The homotopy relation of MC elements and the Sullivan homotopy of morphisms of pseudo-compact local cdgas are so closely related that Proposition \ref{prop_iso_func2} also holds on the level of homotopy.

\begin{proposition}\label{prop_isofunc_homotopy}
Given a curved Lie algebra $\mathfrak{g}$ and a local pseudo-compact cdga $(A,m_A)$, the functors
\[
(\mathfrak{g},A)\mapsto \Hom_{\operatorname{Ho}(\mathcal{E})} (\CE(\mathfrak{g}),A)) \;\; \mathrm{and} \;\; (\mathfrak{g},A)\mapsto \widetilde{\mathcal{MC}}(\mathfrak{g}\hat{\otimes} A)
\]
are naturally isomorphic.
\end{proposition}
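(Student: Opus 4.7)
The plan is to descend the natural bijection of Proposition \ref{prop_iso_func2} to the level of homotopy classes on both sides. Since $\CE(\mathfrak{g})$ is cofibrant in $\mathcal{E}$ and every object of $\mathcal{E}$ is fibrant, the hom-set $\Hom_{\Ho(\mathcal{E})}(\CE(\mathfrak{g}),A)$ is computed as the quotient of $\Hom_{\mathcal{E}}(\CE(\mathfrak{g}),A)$ by the model-categorical homotopy relation. The problem therefore reduces to identifying this relation, under the bijection of Proposition \ref{prop_iso_func2}, with the Maurer--Cartan homotopy relation implemented by the path object $A[z,dz]$.

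For this identification I would invoke Theorem \ref{thm_sullivan_homotopy} of the appendix, which tells us that Sullivan homotopy of cdga morphisms $\CE(\mathfrak{g})\to A$ agrees with the model-categorical homotopy relation. The paragraph immediately preceding the proposition already records the key observation: applying Proposition \ref{prop_iso_func2} with $A[z,dz]$ in place of $A$ shows that a morphism $h\colon\CE(\mathfrak{g})\to A[z,dz]$ corresponds to an element of $\widetilde{\MC}(\mathfrak{g}\hat{\otimes}A[z,dz])$, and moreover the restrictions $h|_0,h|_1$ correspond under this bijection to the two MC endpoints of the homotopy. Consequently, two elements of $\widetilde{\MC}(\mathfrak{g}\hat{\otimes}A)$ are homotopic precisely when the associated cdga morphisms $\CE(\mathfrak{g})\to A$ are Sullivan homotopic, which by Theorem \ref{thm_sullivan_homotopy} means precisely that they are homotopic in the model-categorical sense, i.e.\ represent the same class in $\Hom_{\Ho(\mathcal{E})}(\CE(\mathfrak{g}),A)$.

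Assembling the two steps, the bijection of Proposition \ref{prop_iso_func2} descends to equivalence classes. Naturality in the pair $(\mathfrak{g},A)$ is inherited from the naturality of Proposition \ref{prop_iso_func2} together with the evident functoriality of the construction $A\mapsto A[z,dz]$ (the projections $|_0,|_1$ being natural in $A$), so nothing new needs to be verified on this point.

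The principal obstacle, which is deferred to the appendix, is the identification of Sullivan homotopy with model-categorical homotopy: the path-object structure on $A[z,dz]$ must be analysed separately for Hinich algebras and for acyclic algebras, and one must check that the diagonal $A\to A\times A$ factors appropriately through $A[z,dz]$ in each case. Once Theorem \ref{thm_sullivan_homotopy} is granted, the present proposition is essentially formal.
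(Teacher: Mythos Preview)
Your argument is correct and follows essentially the same route as the paper: the paper's proof is the one-line observation that $\CE(\mathfrak{g})$ is cofibrant, so Theorem \ref{thm_sullivan_homotopy} identifies Sullivan homotopy with homotopy in $\Ho(\mathcal{E})$, and the discussion preceding the proposition already matched Sullivan homotopy with the MC homotopy relation. You have simply unpacked this reasoning in more detail; the final paragraph's speculation about how the appendix handles the path object is extraneous to the proof but not incorrect.
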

\begin{proof}
The two definitions of homotopy are equivalent since $\CE(\mathfrak{g})$ is cofibrant (see Theorem \ref{thm_sullivan_homotopy}), and so the result follows.
\end{proof}

\subsection{Deformations over pseudo-compact cdgas}

For a dgla $\mathfrak{g}$ and a local Artin algebra $(A,m_A)$, recall the tensor $\mathfrak{g}\hat{\otimes} m_A$ possesses a well-defined dgla structure. Shadowing the inspiration of Drinfeld, Kontsevich, et al., where the deformation functor associated to $\mathfrak{g}$ is defined as the functor mapping $(A, m_A)$, to the set of $\MC$ elements of $\mathfrak{g}\hat{\otimes} m_A$ modulo gauge equivalence, the following definition is made.

\begin{definition}\label{def_def_functor}
Fixing a marked curved Lie algebra $(\mathfrak{g},\lbrace x_i \rbrace_{i\in I})\in \curvedLie_*$, a deformation functor $\operatorname{Def}_{(\mathfrak{g},\lbrace x \rbrace_{i\in I} )}\colon\mathscr{A} \to\mathscr{S}$ is given by
\[
A=\prod_{j\in J} A_j \mapsto \prod_{j\in J} \coprod_{i\in I} \widetilde{\mathcal{MC}}\left( \mathfrak{g}^{x_i} \hat{\otimes} A_j\right).
\]
\end{definition}

Using Proposition \ref{prop_algebra_morphisms} and Proposition \ref{prop_isofunc_homotopy} one arrives at the following theorem.

\begin{theorem}\label{thm_rep_def}
The deformation functor $\operatorname{Def}_{(\mathfrak{g},\lbrace x_i \rbrace_{i\in I})} \colon\mathscr{A} \to \mathscr{S}$ is representable in the homotopy category of $\mathcal{A}$ by $\widetilde{\CE} (\mathfrak{g},\lbrace x_i \rbrace_{i\in I})$.
\end{theorem}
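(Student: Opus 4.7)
The plan is to chain the natural isomorphisms of Propositions \ref{prop_algebra_morphisms} and \ref{prop_isofunc_homotopy}, working under the identification $\mathcal{A}\simeq\Prod(\mathcal{E})$. Fix $A=\prod_{j\in J} A_j\in\mathcal{A}$; the proposed representing object unfolds as $\widetilde{\CE}(\mathfrak{g},\{x_i\}_{i\in I})=\prod_{i\in I}\CE(\mathfrak{g}^{x_i})$, while Definition \ref{def_def_functor} gives $\operatorname{Def}_{(\mathfrak{g},\{x_i\})}(A)=\prod_{j\in J}\coprod_{i\in I}\widetilde{\mathcal{MC}}(\mathfrak{g}^{x_i}\hat{\otimes} A_j)$.

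First I would lift Proposition \ref{prop_algebra_morphisms} to the homotopy category, establishing
\[
\Hom_{\Ho(\mathcal{A})}\bigl(\widetilde{\CE}(\mathfrak{g},\{x_i\}),A\bigr)\;\cong\;\prod_{j\in J}\coprod_{i\in I}\Hom_{\Ho(\mathcal{E})}\bigl(\CE(\mathfrak{g}^{x_i}),A_j\bigr).
\]
Each $\CE(\mathfrak{g}^{x_i})$ is cofibrant in $\mathcal{E}$, so by Definition \ref{def_classes_prod} the formal product $\widetilde{\CE}(\mathfrak{g},\{x_i\})$ is cofibrant in $\Prod(\mathcal{E})$. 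Maps in $\Ho(\mathcal{A})$ out of this cofibrant source may therefore be computed as $\Hom_\mathcal{A}$ modulo right-homotopy via a path object on $A$, and such a path object can be assembled component-wise from path objects on each $A_j$ in $\mathcal{E}$. A homotopy between two morphisms of $\Prod(\mathcal{E})$ built in this way necessarily preserves the underlying index map $J\to I$ (since postcomposition with each projection is still a morphism of formal products) and restricts on each component to a homotopy in $\mathcal{E}$; hence the set-level bijection of Proposition \ref{prop_algebra_morphisms} descends to the quotient.

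Secondly, applying Proposition \ref{prop_isofunc_homotopy} to each pair $(\mathfrak{g}^{x_i},A_j)$ replaces the right-hand side above by $\prod_{j\in J}\coprod_{i\in I}\widetilde{\mathcal{MC}}(\mathfrak{g}^{x_i}\hat{\otimes} A_j)=\operatorname{Def}_{(\mathfrak{g},\{x_i\})}(A)$, and naturality in $A$ is inherited from both propositions. The main obstacle is the first step: one must verify that the index data of a morphism in $\Prod(\mathcal{E})$ is a genuine homotopy invariant and that the fibrewise homotopy classes of the component morphisms are correctly captured, which is where cofibrancy of $\widetilde{\CE}(\mathfrak{g},\{x_i\})$ together with the component-wise description of weak equivalences in Definition \ref{def_classes_prod} become essential. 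Once that is in hand, the representability statement is a routine unpacking of definitions.
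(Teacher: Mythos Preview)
Your proposal is correct and follows essentially the same route as the paper: both proofs chain Proposition~\ref{prop_isofunc_homotopy} (identifying $\widetilde{\mathcal{MC}}(\mathfrak{g}^{x_i}\hat{\otimes}A_j)$ with $\Hom_{\Ho(\mathcal{E})}(\CE(\mathfrak{g}^{x_i}),A_j)$) and a homotopy-level version of Proposition~\ref{prop_algebra_morphisms} (assembling these into $\Hom_{\Ho(\mathcal{A})}(\widetilde{\CE}(\mathfrak{g},\{x_i\}),A)$). The paper simply writes this as a two-line chain of isomorphisms and treats the passage from Proposition~\ref{prop_algebra_morphisms} to its $\Ho$-analogue as evident from the model structure on $\Prod(\mathcal{E})$; you have supplied the justification the paper omits, namely that $\widetilde{\CE}(\mathfrak{g},\{x_i\})$ is cofibrant, that path objects in $\Prod(\mathcal{E})$ are built componentwise, and that the index map $J\to I$ is a homotopy invariant by Definition~\ref{def_classes_prod}.
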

\begin{proof}
Let $A\in\mathcal{A}$,
\[
\begin{aligned}
\operatorname{Def}_{(\mathfrak{g},\lbrace x_i \rbrace_{i\in I})} (A)&=
\prod_{j\in J} \coprod_{i\in I} \widetilde{\mathcal{MC}}\left( \mathfrak{g}^{x_i} \hat{\otimes} A_j\right) \\
& \cong \prod_{j\in J} \coprod_{i\in I} \Hom_{\Ho (\mathcal{E})} \left( \CE (\mathfrak{g}^{x_i}), A_j \right) \\
& \cong \Hom_{\Ho (\mathcal{A})} \left(\widetilde{\CE} (\mathfrak{g}, \lbrace x_i \rbrace_{i\in I}), A\right). \\
\end{aligned}
\]
\end{proof}

\begin{corollary}
The functor $\operatorname{Def}_{(\mathfrak{g},\lbrace x_i \rbrace_{i\in I})}$ is homotopy invariant in both the marked Lie algebra and pseudo-compact cdga variables.
\end{corollary}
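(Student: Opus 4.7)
My plan is to reduce this corollary to a purely formal consequence of Theorem \ref{thm_rep_def}, using that representable functors on a model category are automatically homotopy invariant in their variable arguments.

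First, Theorem \ref{thm_rep_def} supplies, for every $A \in \mathcal{A}$, a natural isomorphism
\[
\operatorname{Def}_{(\mathfrak{g},\{x_i\}_{i\in I})}(A) \;\cong\; \Hom_{\Ho(\mathcal{A})}\bigl(\widetilde{\CE}(\mathfrak{g}, \{x_i\}_{i\in I}),\, A\bigr).
\]
Homotopy invariance in the pseudo-compact cdga variable is then immediate: a weak equivalence $A \to A'$ in $\mathcal{A}$ becomes an isomorphism in $\Ho(\mathcal{A})$, and postcomposition with this isomorphism provides a natural bijection between the representing Hom-sets, hence between $\operatorname{Def}_{(\mathfrak{g},\{x_i\})}(A)$ and $\operatorname{Def}_{(\mathfrak{g},\{x_i\})}(A')$.

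For the marked curved Lie algebra variable, I would take a weak equivalence $(f,\alpha)\colon (\mathfrak{g}, \{x_i\}_{i\in I}) \to (\mathfrak{h}, \{y_j\}_{j\in J})$ in $\curvedLie_*$ and apply the proposition preceding Theorem \ref{thm_main} showing that $\widetilde{\CE}$ preserves weak equivalences. Thus $\widetilde{\CE}(f,\alpha)$ is a weak equivalence in $\Prod(\mathcal{E})$, and equivalently in $\mathcal{A}$; it therefore descends to an isomorphism in $\Ho(\mathcal{A})$, and precomposition with this isomorphism yields a natural bijection $\operatorname{Def}_{(\mathfrak{h},\{y_j\})}(A) \cong \operatorname{Def}_{(\mathfrak{g},\{x_i\})}(A)$ for every $A \in \mathcal{A}$.

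The argument is largely formal, so there is no serious obstacle; the only point requiring care is notational, namely ensuring that ``homotopy invariance'' in each variable is interpreted with respect to the proper class of weak equivalences (those of Definition \ref{def_classes} on the marked curved Lie algebra side, and those of $\mathcal{A}$ obtained via Definition \ref{def_classes_prod} on the pseudo-compact cdga side). Once this is unwound, both halves follow at once from representability and preservation of weak equivalences by $\widetilde{\CE}$.
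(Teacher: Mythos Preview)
Your proposal is correct and follows essentially the same approach as the paper: representability in $\Ho(\mathcal{A})$ gives invariance in the cdga variable, while the fact that $\widetilde{\CE}$ preserves weak equivalences gives invariance in the marked Lie algebra variable. The paper's proof is the one-line version of exactly this argument.
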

\begin{proof}
$\widetilde{\CE}$ is homotopy invariant and $\operatorname{Def}_{(\mathfrak{g},\lbrace x_i \rbrace_{i\in I})}$ is representable in the homotopy category of pseudo-compact cdgas, thus the result follows.
\end{proof}

The functor given in Definition \ref{def_def_functor} relies upon the decomposition of a pseudo-compact cdga into the product of local ones. By recalling a pair of Quillen adjoint functors originally given in \cite{chuang_laz_mannan} a more satisfying equivalent definition that does not rely upon a decomposition can be given in the case where the curved Lie algebra possesses one marked point. Let the functor $F\colon\mathcal{E}\to\Prod (\mathcal{E})$ be given by the inclusion of a local pseudo-compact cdga to the formal product over a singleton set, and let $G\colon\Prod (\mathcal{E})\to\mathcal{E}$ be given by taking the formal product to the actual product of $\mathcal{E}$. It is proven in \cite{chuang_laz_mannan} these functors are Quillen adjoint. As defined here, $G$ uses the decomposition of a pseudo-compact cdga into the product of local ones. However, it is possible to define $G$ without this luxury, c.f.\ \cite{chuang_laz_mannan}.

\begin{definition}\label{def_def_functor2}
Fixing a marked curved Lie algebra $(\mathfrak{g},\lbrace x \rbrace)\in \curvedLie_*$, a deformation functor $\widehat{\operatorname{Def}}_{(\mathfrak{g},\lbrace x \rbrace )}\colon\mathscr{A} \to\mathscr{S}$ is given by $A\mapsto \widetilde{\mathcal{MC}} \left(\mathfrak{g}^x \hat{\otimes}\;G (A)\right)$.
\end{definition}

\begin{proposition}\label{prop_iso_def_functors}
Fix a marked curved Lie algebra $(\mathfrak{g},\lbrace x \rbrace)\in\curvedLie_*$. The functors $\widehat{\operatorname{Def}}_{(\mathfrak{g},\lbrace x \rbrace )}$ and $\operatorname{Def}_{(\mathfrak{g},\lbrace x \rbrace )}$ are naturally isomorphic.
\end{proposition}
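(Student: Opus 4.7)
The plan is to chain together three natural bijections, combining the representability established in Theorem \ref{thm_rep_def} with the Quillen adjunction $F \dashv G$ between $\mathcal{E}$ and $\Prod(\mathcal{E})$ recalled just before the statement. The key observation that makes this approach work in the single-marked-point setting is that $\widetilde{\CE}(\mathfrak{g},\{x\})$ is by unravelling of definitions nothing but $F(\CE(\mathfrak{g}^x))$, i.e.\ the formal product over a one-element index set with sole factor $\CE(\mathfrak{g}^x)$.

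First, I would apply Proposition \ref{prop_isofunc_homotopy} to the curved Lie algebra $\mathfrak{g}^x$ and the local pseudo-compact cdga $G(A)\in\mathcal{E}$, obtaining
\[
\widehat{\operatorname{Def}}_{(\mathfrak{g},\{x\})}(A)=\widetilde{\mathcal{MC}}\bigl(\mathfrak{g}^x\hat{\otimes} G(A)\bigr)\cong \Hom_{\Ho(\mathcal{E})}\bigl(\CE(\mathfrak{g}^x),G(A)\bigr).
\]
Second, using that $\CE(\mathfrak{g}^x)$ is cofibrant in $\mathcal{E}$ and that $F\dashv G$ is a Quillen adjunction, the derived adjunction gives
\[
\Hom_{\Ho(\mathcal{E})}\bigl(\CE(\mathfrak{g}^x),G(A)\bigr)\cong \Hom_{\Ho(\Prod(\mathcal{E}))}\bigl(F(\CE(\mathfrak{g}^x)),A\bigr).
\]
Third, identifying $F(\CE(\mathfrak{g}^x))$ with $\widetilde{\CE}(\mathfrak{g},\{x\})$ and invoking Theorem \ref{thm_rep_def} (via the equivalence $\mathcal{A}\simeq\Prod(\mathcal{E})$) yields $\Hom_{\Ho(\mathcal{A})}(\widetilde{\CE}(\mathfrak{g},\{x\}),A)\cong\operatorname{Def}_{(\mathfrak{g},\{x\})}(A)$. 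The composite of these three natural bijections is the required natural isomorphism.

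The main obstacle I expect is ensuring the naturality in $A$ at every stage, and in particular handling the derived adjunction step cleanly (so that no additional cofibrant or fibrant replacements need to be made to complete the comparison). A more elementary alternative would be to prove directly that $\mathfrak{g}^x\hat{\otimes} G(A)\cong\prod_j(\mathfrak{g}^x\hat{\otimes} A_j)$ as curved Lie algebras, using the compatibility of the completed tensor product with products of pseudo-compact objects, and then observe that both $\widetilde{\MC}$ and its homotopy classes decompose componentwise into a product indexed by $J$, matching the definition of $\operatorname{Def}$; this avoids representability but requires delicate bookkeeping concerning the definition of $G$ and the behaviour of $\hat{\otimes}$ on products.
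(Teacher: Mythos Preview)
Your argument is correct, but it is considerably more elaborate than the paper's, whose entire proof reads ``It is immediate from the definitions.'' What the paper has in mind is essentially your ``more elementary alternative'': with a single marked point the coproduct over $I$ in Definition~\ref{def_def_functor} collapses, giving $\operatorname{Def}_{(\mathfrak{g},\{x\})}(A)=\prod_{j\in J}\widetilde{\mathcal{MC}}(\mathfrak{g}^{x}\hat{\otimes}A_j)$; and since $G$ is described as sending a formal product to the actual product in $\mathcal{E}$, one has $\mathfrak{g}^{x}\hat{\otimes}G(A)\cong\prod_j(\mathfrak{g}^{x}\hat{\otimes}A_j)$, so that $\widetilde{\mathcal{MC}}$ and its homotopy quotient decompose as the same product of sets. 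That is the one-line comparison the paper is gesturing at.

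Your main route---Proposition~\ref{prop_isofunc_homotopy}, then the derived adjunction for the Quillen pair $(F,G)$, then Theorem~\ref{thm_rep_def}---is a legitimate and fully rigorous detour. It has the merit of making every identification explicit at the level of homotopy categories, and it explains why the Quillen adjunction $(F,G)$ is invoked in the paragraph preceding the statement. On the other hand, it is somewhat circular relative to the paper's exposition: the paper \emph{deduces} the representability of $\widehat{\operatorname{Def}}_{(\mathfrak{g},\{x\})}$ from the present proposition, whereas you essentially establish that representability first (via the derived adjunction and the identification $F(\CE(\mathfrak{g}^x))=\widetilde{\CE}(\mathfrak{g},\{x\})$) and then compare representing objects. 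Both orderings are valid; yours just reverses the paper's logic.
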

\begin{proof}
It is immediate from the definitions.
\end{proof}

\begin{theorem}
The functor $\widehat{\operatorname{Def}}_{(\mathfrak{g},\lbrace x \rbrace )}$ is representable in the homotopy category of pseudo-compact cdgas by $\widetilde{\CE}(\mathfrak{g},\lbrace x \rbrace )$.
\end{theorem}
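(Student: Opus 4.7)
The plan is to deduce this theorem directly from the two immediately preceding results. Concretely, Proposition \ref{prop_iso_def_functors} provides a natural isomorphism $\widehat{\operatorname{Def}}_{(\mathfrak{g},\lbrace x \rbrace)} \cong \operatorname{Def}_{(\mathfrak{g},\lbrace x \rbrace)}$ of functors $\mathscr{A} \to \mathscr{S}$, and Theorem \ref{thm_rep_def}, specialised to the singleton index set $I=\lbrace * \rbrace$, says $\operatorname{Def}_{(\mathfrak{g},\lbrace x \rbrace)}(A) \cong \Hom_{\Ho(\mathcal{A})}(\widetilde{\CE}(\mathfrak{g},\lbrace x \rbrace),A)$ naturally in $A$. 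Composing these two natural isomorphisms yields the desired representability statement, with representing object $\widetilde{\CE}(\mathfrak{g},\lbrace x \rbrace)$.

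In more detail, I would first invoke Proposition \ref{prop_iso_def_functors} to rewrite $\widehat{\operatorname{Def}}_{(\mathfrak{g},\lbrace x \rbrace)}(A)$ as $\operatorname{Def}_{(\mathfrak{g},\lbrace x \rbrace)}(A)$. Next, unpacking Definition \ref{def_def_functor} in the case $I=\lbrace *\rbrace$ and writing $A \cong \prod_{j\in J}A_j$ for its decomposition into local pseudo-compact cdgas, this becomes $\prod_{j\in J}\widetilde{\mathcal{MC}}(\mathfrak{g}^{x}\hat{\otimes}A_j)$. Applying Proposition \ref{prop_isofunc_homotopy} componentwise converts each factor to $\Hom_{\Ho(\mathcal{E})}(\CE(\mathfrak{g}^{x}),A_j)$. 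Finally, Proposition \ref{prop_algebra_morphisms} (which, since $I=\lbrace *\rbrace$, reduces the coproduct to a single term) assembles these into $\Hom_{\Ho(\mathcal{A})}(\widetilde{\CE}(\mathfrak{g},\lbrace x \rbrace),A)$. This is exactly the chain of isomorphisms appearing in the proof of Theorem \ref{thm_rep_def}, now read off in the singleton case.

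There is essentially no obstacle: all of the nontrivial work (the Koszul duality Quillen equivalence of Theorem \ref{thm_main}, the homotopy-level reinterpretation of $\MC$ elements in Proposition \ref{prop_isofunc_homotopy}, and the hom-set decomposition in Proposition \ref{prop_algebra_morphisms}) has already been done upstream. The one point worth being careful about is ensuring naturality of the composite isomorphism in $A$, but each of the three isomorphisms used is natural by construction, so the composite is too. I would conclude with a one-line remark that this is simply the specialisation of Theorem \ref{thm_rep_def} to $|I|=1$, transported across Proposition \ref{prop_iso_def_functors}.
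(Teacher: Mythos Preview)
Your proposal is correct and matches the paper's own proof, which simply says ``It follows from Proposition \ref{prop_iso_def_functors}'' (implicitly combined with Theorem \ref{thm_rep_def}). Your expanded version just unwinds the singleton case of Theorem \ref{thm_rep_def} explicitly, which is more detail than the paper provides but follows the same route.
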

\begin{proof}
It follows from Proposition \ref{prop_iso_def_functors}.
\end{proof}

\appendix

\section{Sullivan homotopy and path objects}\label{sec_sullivan}

In Section \ref{sec_deformation} the notion of a Sullivan homotopy is used to relate homotopy classes of MC elements in certain curved Lie algebras with the homotopy classes of morphisms in $\mathcal{E}$. A Sullivan homotopy is reminiscent of a right homotopy in a CMC, but in $\mathcal{E}$ this is not quite the case: the candidate for a so-called path object is not pseudo-compact. One could attempt to fix this failure by extending $\mathcal{E}$ suitably. This approach, however, is not be taken here as it is not necessary.

Within this section interest is restricted to the categories $\mathcal{E}$ and $\mathcal{G}$; most importantly, all morphisms between curved Lie algebras are strict.

\begin{definition}\label{def_path_objects}
Given $\mathfrak{g}\in\mathcal{G}$, let $\mathfrak{g}[z,dz]\in\mathcal{G}$ be given by $\mathfrak{g}\otimes k[z,dz]$, and the quotient morphisms given by setting $z$ to $0$ and $1$ are denoted $|_0,|_1 \colon\mathfrak{g}[z,dz]\to \mathfrak{g}$, resp.
\end{definition}

Recall Definition \ref{def_algebra_path_object}. As already remarked, the objects $A[z,dz]$ and $\mathfrak{g}[z,dz]$ resemble path objects for $A$ and $\mathfrak{g}$, respectively. However, $k[z,dz]$ is not pseudo-compact, and $A[z,dz]\notin\mathcal{E}$. Therefore, $A[z,dz]$ cannot be a path object for $A$ in $\mathcal{E}$.

\begin{proposition}
Given $\mathfrak{g}\in\mathcal{G}$, $\mathfrak{g}[z,dz]$ is a very good path object for $\mathfrak{g}$ in $\mathcal{G}$.
\end{proposition}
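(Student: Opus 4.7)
The plan is to verify that the factorization
\[
\mathfrak{g}\xrightarrow{\iota}\mathfrak{g}[z,dz]\xrightarrow{(|_0,|_1)}\mathfrak{g}\times\mathfrak{g},
\]
with $\iota\colon x\mapsto x\otimes 1$, realizes the diagonal $\Delta$ as an acyclic cofibration followed by a fibration. The composite equaling $\Delta$, and that $\mathfrak{g}[z,dz]$ is a well defined curved Lie algebra with curvature $\omega_\mathfrak{g}\otimes 1$, are direct checks.

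For $(|_0,|_1)$ being a fibration, by Definition \ref{def_classes_strict} it suffices to exhibit surjectivity, which the section $(x,y)\mapsto x\otimes(1-z)+y\otimes z$ provides. For $\iota$ being a weak equivalence, I would split by curvature: if $\omega_\mathfrak{g}\neq 0$, then $\omega_\mathfrak{g}\otimes 1$ is the non-zero curvature of $\mathfrak{g}[z,dz]$, and any strict morphism between curved Lie algebras of non-zero curvature is a weak equivalence by Definition \ref{def_classes_strict}; if $\omega_\mathfrak{g}=0$, both are dglas and $\iota$ is a quasi-isomorphism, since $k\hookrightarrow k[z,dz]$ is one (the acyclic de Rham algebra of the unit interval) and tensoring over the field $k$ with $\mathfrak{g}$ preserves quasi-isomorphisms by K\"unneth.

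The main obstacle is showing $\iota$ is a cofibration, i.e., has the LLP against each acyclic fibration $\pi\colon\mathfrak{h}\to\mathfrak{h}'$. Given a square of strict morphisms with $f\colon\mathfrak{g}\to\mathfrak{h}$ on top and $F\colon\mathfrak{g}[z,dz]\to\mathfrak{h}'$ on the bottom, my plan is to construct a lift $L\colon\mathfrak{g}[z,dz]\to\mathfrak{h}$ inductively on the tensor decomposition $\mathfrak{g}[z,dz]=\bigoplus_{n\geq 0}(\mathfrak{g}\otimes z^n\oplus\mathfrak{g}\otimes z^{n-1}dz)$: for each basis element one chooses a preimage under $\pi$ of its image under $F$, then corrects by a boundary to secure compatibility with both the differential $d(x\otimes z^n)=dx\otimes z^n+(-1)^{|x|}n\,x\otimes z^{n-1}dz$ and the bracket $[x\otimes f,y\otimes g]=(-1)^{|f||y|}[x,y]\otimes fg$. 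The preimages exist by surjectivity of $\pi$, and the boundary correction is available because $\pi$, being an acyclic fibration, is surjective on cycles modulo boundaries. The hard part will be managing the infinitely many simultaneous constraints (bracket and differential conditions couple the choices across all monomials in $z$ and $dz$); the essential combinatorial content of the proof is organising the inductive choices so that each step is consistent with all previous ones. Once $\iota$ is an acyclic cofibration, $\mathfrak{g}[z,dz]$ is a very good path object for $\mathfrak{g}$ in $\mathcal{G}$.
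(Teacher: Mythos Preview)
Your treatment of the fibration and the weak equivalence matches the paper's proof: the paper also splits on whether $\omega_\mathfrak{g}$ vanishes, deferring the dgla case to the literature and invoking Definition~\ref{def_classes_strict} in the curved case, and observes that $(|_0,|_1)$ is surjective. Your K\"unneth argument for the dgla case is a reasonable way to unpack what the paper leaves as ``already known.''

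The gap is in the cofibration step. The paper, like you, reduces to checking the LLP against acyclic fibrations, but it does not spell out the lift and simply asserts it is easy; your proposed construction, however, does not work as written. The issue is structural rather than combinatorial: a lift $L\colon\mathfrak{g}[z,dz]\to\mathfrak{h}$ must be a morphism of \emph{Lie algebras}, and $\mathfrak{g}[z,dz]$ is not free as a Lie algebra over $\mathfrak{g}$ on the monomials $x\otimes z^n$, $x\otimes z^{n-1}dz$. Once you choose $L$ on $\mathfrak{g}\otimes z$, the bracket relation $[x\otimes z,\,y\otimes z]=(-1)^{|y|}[x,y]\otimes z^2$ already forces $L$ on $[\mathfrak{g},\mathfrak{g}]\otimes z^2$, and there is no mechanism guaranteeing that these forced values are consistent with an independent choice on a complement of $[\mathfrak{g},\mathfrak{g}]$ in $\mathfrak{g}$, or with one another as the relations cascade. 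The device of ``correcting by a boundary'' is borrowed from lifting problems for \emph{chain maps} (where the acyclic kernel of $\pi$ absorbs obstructions to $dL=Ld$); adding a boundary to $L(x\otimes z^n)$ will in general destroy multiplicativity, so it does not address the bracket constraints at all. In the case where $\pi$ is merely a surjection between objects of non-zero curvature (which is an acyclic fibration in $\mathcal{G}$), the kernel need not even be acyclic, so the boundary-correction idea has no purchase whatsoever. You will need a genuinely different argument for this step---either by invoking an explicit description of the cofibrations of $\mathcal{G}$ from \cite{chuang_laz_mannan}, or by an argument that does not attempt to build the lift coordinate-by-coordinate.
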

\begin{proof}
When $\mathfrak{g}$ has zero curvature (i.e.\ a dgla) the statement is already known. Assuming $\mathfrak{g}$ has non-zero curvature, the canonical morphism $\mathfrak{g}\to\mathfrak{g}[z,dz]$ is between two curved Lie algebras with non-zero curvature and, therefore, a weak equivalence. Moreover, the diagonal morphism $(id_\mathfrak{g},id_\mathfrak{g})\colon\mathfrak{g}\to\mathfrak{g}\prod\mathfrak{g}$ can be factorised as $\mathfrak{g}\xrightarrow{\sim}\mathfrak{g}[z,dz]\to\mathfrak{g}\prod\mathfrak{g}$ with the morphism $\mathfrak{g}[z,dz]\to\mathfrak{g}\prod\mathfrak{g}$ clearly surjective and hence a fibration. Further, the morphism $\mathfrak{g}\to\mathfrak{g}[z,dz]$ can easily be shown to be a cofibration by showing it has the LLP with respect to all acyclic fibrations.
\end{proof}

\begin{definition}\hfill
\begin{itemize}
\item Let $A, B\in\mathcal{E}$. Two parallel morphisms of $\mathcal{E}$, $f,g\colon A\to B$, are said to be Sullivan homotopic if there exists a continuous local cdga morphism $h\colon A\to B[z,dz]$ such that $h|_0=f$ and $h|_1=g$.
\item Let $\mathfrak{g},\mathfrak{h}\in\mathcal{G}$. Two parallel morphisms of $\mathcal{G}$, $f,g\colon\mathfrak{g}\to \mathfrak{h}$, are said to be Sullivan homotopic if there exists a curved Lie algebra morphism $h\colon\mathfrak{g}\to \mathfrak{h}[z,dz]$ such that $h|_0=f$ and $h|_1=g$.
\end{itemize}
\end{definition}

Since all objects of $\mathcal{G}$ are fibrant, the right homotopy with the path object given above is an equivalence relation, c.f.\ \cite[Lemma 4.16]{dwyer_spalinski}. Therefore a Sullivan homotopy in $\mathcal{G}$ is simply a right homotopy. When the source object is cofibrant this notion of homotopy coincides with that given by the weak equivalences of the category $\mathcal{G}$, whence the notion of a Sullivan homotopy coincides with that of the model structure.

It was proven in \cite[Theorem 3.6]{andrey_MC} that for a cofibrant Hinich algebra, $A$, and an arbitrary Hinich algebra, $B$, the set of equivalence classes of Sullivan homotopic morphisms $A\to B$ is in bijection with the set of morphisms $A\to B$ in the homotopy category of the Hinich category. This theorem extends to $\mathcal{E}$.

\begin{theorem}[Lazarev]\label{thm_sullivan_homotopy}
Given a cofibrant $A\in\mathcal{E}$ and any arbitrary $B\in\mathcal{E}$, two parallel morphisms $f,g\colon A\to B$ are Sullivan homotopic if, and only if, they represent the same morphism in $\Ho (\mathcal{E})$. More precisely, there is a bijective correspondence between equivalence classes of Sullivan homotopic morphisms $A\to B$ in $\mathcal{E}$ and the set of morphisms $A\to B$ in $\Ho (\mathcal{E})$.
\end{theorem}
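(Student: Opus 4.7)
The plan is to extend \cite[Theorem~3.6]{andrey_MC}, which establishes this statement for the Hinich subcategory, to all of $\mathcal{E}$ by case analysis on whether $A$ and $B$ are Hinich or acyclic algebras; recall that every object of $\mathcal{E}$ is one of these two types.

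When $A$ and $B$ are both Hinich, the result is immediate from \cite[Theorem~3.6]{andrey_MC}, once one verifies that a cofibrant Hinich object in $\mathcal{E}$ is also cofibrant in the Hinich category. This follows from the structural description of cofibrations in Definition~\ref{def_model_for_extended} together with Proposition~\ref{prop_no_morphisms_acyclic_to_hinich}: a cofibration $k \to Y$ with $Y$ Hinich cannot be a retract of a tensor product involving $\Lambda$, since such a retraction would require a morphism from an acyclic algebra to the Hinich algebra $Y$; hence $Y$ is a retract of a Hinich-cofibrant algebra. The Quillen adjunction of Remark~\ref{rem_functor_H} identifies hom-sets in $\Ho(\mathcal{E})$ between Hinich objects with those in the Hinich homotopy category, and the definitions of Sullivan homotopy coincide on the nose. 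The case of $A$ acyclic and $B$ Hinich is vacuous by Proposition~\ref{prop_no_morphisms_acyclic_to_hinich}.

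When $A$ is Hinich and $B$ is acyclic, Proposition~\ref{prop_no_morphisms_acyclic_to_hinich} ensures every morphism $A \to B$ factors through $B^H$, giving a natural bijection between $\Hom_\mathcal{E}(A,B)$ and the corresponding hom-set in the Hinich category; the adjunction of Remark~\ref{rem_functor_H} yields the analogous bijection at the level of $\Ho$. To match Sullivan-homotopy classes, I would show that any Sullivan homotopy $h \colon A \to B[z,dz]$ with endpoints in $B^H$ admits a modification $h' \colon A \to B^H[z,dz]$ sharing those endpoints, using the canonical decomposition $B = B^H \oplus k \cdot y$ (with $dy$ a unit in $B$ since $B$ is acyclic) together with the Hinich property $dA \subseteq M_A$ to contract the ``$y$-component'' of $h$ via an explicit secondary Sullivan homotopy. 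Finally, when both $A$ and $B$ are acyclic, Definition~\ref{def_model_for_extended} gives that, up to retracts, a cofibrant acyclic $A \in \mathcal{E}$ has the form $A' \otimes \Lambda$ for some cofibrant Hinich $A'$. Morphisms $A' \otimes \Lambda \to B$ correspond to pairs $(f' \colon A' \to B,\, y \in B_1)$ with $dy = 1$, and Sullivan homotopies decompose analogously; the Hinich case applied to $A'$ together with an affine contraction of the space of admissible $y$ (a torsor over $dB_2 \subseteq B_1$) yields the result.

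The main obstacle is the Hinich-to-acyclic case: a Sullivan homotopy can genuinely use the full algebra $B$, not just $B^H$, even when its endpoints remain in $B^H$, so the straightening of $h$ to $h'$ is nontrivial and requires explicit use of the acyclicity of $B \setminus B^H$ combined with the Hinich constraint on $A$ to control the coefficients of $h$ viewed as a polynomial in $z$ and $dz$.
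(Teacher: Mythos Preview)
The paper's proof is a single sentence: it asserts that the argument of \cite[Theorem~3.6]{andrey_MC} carries over \emph{mutatis mutandis} to $\mathcal{E}$, i.e.\ one reruns the original proof in the larger category rather than reducing to the Hinich case. Your approach is genuinely different: you attempt a case-by-case reduction to the already-known Hinich result. That is a reasonable strategy in principle, and your treatment of the Hinich--Hinich and acyclic--Hinich cases is fine.

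However, the proposal has a real gap, which you yourself flag as the ``main obstacle'': in the Hinich-to-acyclic case you need to show that two morphisms $A\to B^H$ which are Sullivan homotopic through $B[z,dz]$ are already Sullivan homotopic through $B^H[z,dz]$. Your sketch (``contract the $y$-component via an explicit secondary Sullivan homotopy'') is not carried out, and it is not clear it can be: the $y$-component of $h$ is a polynomial in $z,dz$ with coefficients in $A$, and the constraint that $h$ be an algebra map intertwines it with the $B^H$-component in a way that a naive contraction need not respect. Similarly, in the acyclic--acyclic case you pass to $A'\otimes\Lambda$ only ``up to retracts'', but Sullivan homotopy is not obviously preserved under retracts of the source in the absence of a genuine path-object factorisation, so this step also needs justification.

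A cleaner route for the acyclic-codomain cases, closer in spirit to the paper's one-liner, is to observe that any acyclic $B$ is weakly equivalent to the terminal object $\Lambda$, so $\Hom_{\Ho(\mathcal{E})}(A,B)$ is a singleton; it then suffices to show directly that any two morphisms $A\to B$ are Sullivan homotopic, which one can do by exploiting the acyclicity of $B$ (every element of $B$ is a boundary) to build the homotopy by hand, or by passing through the adjunction with $\mathcal{G}$ where $\mathfrak{g}[z,dz]$ is an honest path object. Either way, the reduction you propose is more laborious than simply checking that the steps of the original proof in \cite{andrey_MC} go through unchanged when the curvature is allowed to be nonzero.
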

\begin{proof}
The proof follows, mutatis mutandis, from \cite[Theorem 3.6]{andrey_MC}.
\end{proof}

It should be mentioned, albeit in the conclusion of this appendix, Pridham \cite{pridham} constructed a path object functor for Hinich algebras. This construction is briefly recalled here. Given a finite-dimensional nilpotent local cdga whose maximal ideal respects the differential, $A$, consider the pullback
\begin{center}
\begin{tikzpicture}
\matrix (m) [matrix of math nodes, row sep=2em, column sep=2em]{
A[z,dz]\times_{k[z,dz]} k & k \\
A[z,dz] & k[z,dz], \\
};
\path[->]
(m-1-1) edge (m-1-2)
(m-1-1) edge (m-2-1)
(m-1-2) edge[right hook->] (m-2-2)
(m-2-1) edge node[below] {$\varepsilon$} (m-2-2);
\end{tikzpicture}
\end{center}
where $\varepsilon$ is the induced augmentation. This pullback is isomorphic to the algebra $m_A [z,dz]\oplus k$. The path object of $A$ is given by the completion of $m_A [z,dz]\oplus k$ with respect to ideal $m_A [z,dz]$. This construction is extended to the category of Hinich algebras by using the exactness of the completion functor for finitely generated algebras. For more details on this path object construction see Pridham's paper \cite{pridham} where, using this path object functor, a cylinder object functor is also constructed.

\bibliographystyle{plain}
\bibliography{my_bib}

\end{document}